\documentclass[12pt]{amsart}
\usepackage[all]{xy} 
\usepackage[pagewise]{lineno}
\usepackage[usenames,dvipsnames]{pstricks}
\usepackage{amssymb,mathrsfs,euscript,enumitem,amsmath,bbold}
\usepackage{amssymb}
\usepackage[backgroundcolor=yellow,shadow,colorinlistoftodos]{todonotes}
\hbadness=100000
\vbadness=100000
\usepackage{quoting}

\vbadness100000
\hbadness=100000
\catcode`\@=11
\def\today{{October 17, 2022}}
\def\@evenfoot{\rule{0pt}{20pt}[\today] \hfill [{\tt \jobname.tex}]}
\def\@oddfoot{\rule{0pt}{20pt}{[\tt \jobname.tex}]\hfill [\today]}
\catcode`\@=13
\textheight9.5in
\textwidth6.5in
\voffset-1cm
\parskip 5pt

\textheight9in  
\textwidth6.5in 
\hoffset-1.7cm
\parskip3pt
\parindent10pt
\footnotesep2mm
\overfullrule3pt
\allowdisplaybreaks

\makeatletter
\providecommand\@dotsep{5}
\def\listtodoname{List of Todos}
\def\listoftodos{\@starttoc{tdo}\listtodoname}
\makeatother

\newtheorem{theorem}{Theorem}
\newtheorem{corollary}[theorem]{Corollary}

\newtheorem{lemma}[theorem]{Lemma}

\newtheorem{proposition}[theorem]{Proposition}

\newtheorem*{DiamonD}{Diamond}

\theoremstyle{definition}
\newtheorem{example}[theorem]{Example}

\newtheorem{remark}[theorem]{Remark}
\newtheorem{definition}[theorem]{Definition}
\newtheorem{Warning}{Warning}

\begin{document}

\def\oddGr{{\mathfrak {K}}_\ggGrc} 
\def\bbN{{\mathbb N}}
\def\colorop #1(#2;#3){{#1}
   \left(\rule{0pt}{15pt}\right.
         \hskip -3mm \begin{array}{c}
	              #3\\#2
                     \end{array}
         \hskip -3mm \left. 
   \rule{0pt}{15pt} \right)
}
\def\frakC{{\mathfrak C}}
\def\ttV{{\tt V}}
\def\SRTr{{\tt SRTr}}
\def\A{{\mathbb A}}
\def\Freegg{{\Free_{\hskip -.2em \tt gg}}}
\def\FreeSRTr{{\Free_{\hskip -.2em \tt SRTr}}}
\def\FreeTr{{\Free_{\hskip -.2em \tt Tr}}}
\def\Dgg{{D_{\tt gg}}}
\def\DTr{{D_{\tt Tr}}}
\def\DSRTr{{D_{\tt SRTr}}}
\def\zap{{\mathcal U}}
\def\termP{{\sf 1}_\ttP}
\def\termSRTr{{\sf 1}_\SRTr}
\def\termO{{\sf 1}_\ttO}\def\Min{{\mathfrak M}}
\def\minP{{\mathfrak M_\ttP}}
\def\minTr{{\mathfrak M}_\Tr}
\def\minRTr{{\mathfrak M}_\RTr}
\def\minO{{\mathfrak M_\ttO}}
\def\Oper#1{\hbox{$#1$-${\tt Oper}_1$}} 
\def\Collect#1{\hbox{$#1$-${\tt Coll}_1$}}
\def\OperV#1{\hbox{$#1$-${\tt Oper}^\ttV_1$}} 
\def\CollectV#1{\hbox{$#1$-${\tt Coll}^\ttV_1$}}
\def\minWhe{{\mathfrak M}_{\Whe}}
\def\minggGrc{{\mathfrak M}_{\ggGrc}}
\def\termWhe{{\sf 1}_{\Whe}}
\def\termggGrc{{\sf 1}_{\ggGrc}}
\def\Whe{{\tt Whe}}
\def\Tr{{\tt Tr}}
\def\PRTr{{\tt PRTr}}
\def\PTr{{\tt PTr}}
\def\Dio{{\tt Dio}}
\def\hGr{{\frac12\Gr}}
\def\RTr{{\tt RTr}}
\def\termTr{{\sf 1}_{\Tr}}
\def\termRTr{{\sf 1}_{\RTr}}
\def\ggGrc{{\tt ggGrc}}
\def\grad{{\rm grad}}
\def\bfH{{\mathbf H}}
\def\minGrc{{\mathfrak M}_{\Grc}}
\def\minSRTr{{\mathfrak M}_{\SRTr}}
\def\termGrc{{\sf 1}_{\Grc}}
\def\UPsilon{\Upsilon}
\def\GAmma{\Gamma}
\def\ttC{{\tt C}}
\def\ttA{{\tt A}}
\def\ttB{{\tt B}}
\def\ttD{{\tt D}}
\def\R{{\mathbb R}}
\def\lex{{\rm lex}}
\def\colim#1{\mathop{{\rm colim}}%
             \limits_{\rule{0em}{1em}\mbox{\scriptsize $#1$}}}
\def\Lev{{\tt Lev}}
\def\QV{{\tt QVrt}}
\def\-{{\!-\!}}
\def\bfT{{\boldsymbol {\EuScript T}}}
\def\bfS{{\boldsymbol {\EuScript S}}}
\def\Grc{\tt Grc}
\def\gTr{{\rm gTr}}
\def\gr{{\rm gr}}
\def\Free{{\mathbb F}}
\def\lTw{{\tt lTw}}
\def\Gr{{\tt Gr}}
\def\oucC{{\overline \ucC}}
\def\oDelta{{\overline \Delta}}
\def\ocC{{\overline \cC}}
\def\ucobar{{\underline \cobar}}
\def\ucC{{\underline \cC}}
\def\uD{{\underline \D}}
\def\In{{\rm In}}
\def\vert{{\rm vert}}\def\Vert{\vert}
\def\Ker{{\rm Ker}}
\def\J{{\mathfrak I}}
\def\cC{{\EuScript C}}
\def\Collectord{\hbox{$\Ord$-${\tt Coll}_1$}} 
\def\sgn{{\rm signum}}
\def\preim{{\rm preim}}
\def\can{{\rm can}}
\def\cobar{{\mathbb \Omega}}
\def\eul#1{{\chi\big(#1\big)}}
\def\ss{{\mathfrak s}}
\def\D{{\mathbb D}}
\def\-{\!-\!}
\def\+{\!+\!}
\def\uoO{{\underline\oO}}
\def\uoP{{\underline\oP}}
\def\susp{\uparrow \hskip -.35em}
\def\antishriek{{\hbox{\scriptsize\rm \raisebox{.2em}{!`}}}}
\def\desusp{\downarrow\!}
\def\pcirc{\hbox{\tiny $\diamondsuit$}}
\def\sspcirc{\hbox{\tiny $\diamondsuit^{\hbox{$\ss$}}_{\hbox{$r$}}$}}
\def\M{{\EuScript M}}
\def\K{{\underline {\EuScript K}}}
\def\f{{\gamma}}
\def\bbN{{\mathbb N}}
\def\fib{\triangleright}
\def\uAss{\underline{\hbox{$\mathscr A \hskip -.2em ss$}}}
\def\ainf{{\uAss\,}_\infty}
\def\pa{{\partial}}
\def\minimal{{\mathfrak M}}
\def\oC{{\EuScript C}}
\def\uoS{{\underline \oS}}
\def\oO{{\EuScript O}}
\def\Op#1{\hbox{$#1$-{\tt Oper}}}
\def\uF{\underline{\mathbb F}}
\def\coll#1#2{{\{#1(\underline #2)\}_{#2 \geq 1}}}
\def\F{{\mathbb F}}
\def\oS{{\EuScript S}}
\def\des{{\rm des}}
\def\Oterm{{\sf 1}_{\Ord}}
\def\Pterm{{\sf 1}_{\ttP}}
\def\Ptermplus{{\sf 1}_{\ttP^+}}
\def\Ord{{\mathbb \Delta_{\rm semi}}}
\def\comp{{\rm comp}}
\def\doubless#1#2{{
\def\arraystretch{.5}
\begin{array}{c}
\mbox{$\scriptstyle #1$}
\\
\mbox{$\scriptstyle #2$}
\end{array}\def\arraystretch{1}
}}
\def\Lin{\Vect}
\def\onsOp{{\rm ns}{\EuScript O}{\rm p}}
\def\oOp{{\EuScript O{\rm p}}}
\def\End{{\EuScript E}nd}
\def\oP{{\EuScript P}}
\def\ttO{{\tt O}}
\def\rada#1#2{{#1,\ldots,#2}}
\def\Rada#1#2#3{#1_{#2},\dots,#1_{#3}}
\def\CC{\hbox{\it CC}}
\def\Span{{\rm Span}}
\def\Set{{\tt Set}}
\def\term{\hbox {$\it per \hskip -.1em   {\mathcal A}s$}}
\def\tildeterm{\hbox {$\it per \hskip -.1em  {\widetilde {\mathcal A}s}$}}
\def\tw{\hbox {$\it per \hskip -.1em  {\widetilde {\mathcal A}s^!}$}}
\def\ssterm{\hbox {\scriptsize $\it per \hskip -.1em   {\mathcal A}s$}}
\def\PP{{\mathbb P}}
\def\id{1\!\!1}
\def\Coll{{\tt Coll}}
\def\epi{{\hbox{ $\twoheadrightarrow$ }}}
\def\bfk{{\mathbb k}}
\def\bbk{{\mathbb k}}
\def\bbR{{\mathbb R}}
\def\Vect{{\tt Vec}}
\def\ot{\otimes}
\def\inv#1{{#1}^{-1}}
\def\rada#1#2{{#1,\ldots,#2}}
\def\Surj{{\tt Surj}}
\def\Ass{\underline{{\mathcal A}{\it ss}}}
\def\Fin{{\tt Fin}}
\def\RTr{{\tt RTr}}
\def\ttP{{\tt P}}
\def\qb{quasi\-bijection}

\def\bbk{{\mathbb k}}
\def\H{{\mathbf H}}
\def\C{{\EuScript C}}
\def\Edg{{\rm edg}}
\def\N{{\mathcal N}}

\parskip3pt plus 1pt minus .5pt
\baselineskip 17.25pt  plus 1.5pt minus .5pt

\usetikzlibrary{arrows,decorations.markings}
\title[Minimal models for 
(hyper)operads]{Minimal models for graph-related (hyper)operads}
\keywords{Operad, operadic category, hypergraph, polyhedral realization,
  minimal model, cooperative game}
\author[M.~Batanin]{Michael Batanin}
\thanks{The first author acknowledges  the financial
support of Praemium Academi\ae\ of M.~Markl, of  Max Plank
Institut f\"{u}r Mathematik in Bonn and Institut des Hautes \'Etude Scientifiques in Paris.}
\author[M.~Markl]{Martin~Markl}
\thanks{The second author was supported by 
grant GA \v CR 18-07776S, Praemium Academi\ae\ and RVO: 67985840. He
also acknowledges the support by the National
Science Foundation under Grant No.~DMS-1440140 while he was
in residence at the Mathematical Sciences Research Institute in
Berkeley, California, during the Spring 2020 semester.}
\author[J.~Obradovi\'c]{Jovana~Obradovi\'c}
\thanks{The third author was supported by 
Praemium Academi\ae\ of M.~Markl and RVO: 67985840.}

\makeatletter
\@namedef{subjclassname@2020}{\textup{2020} Mathematics Subject Classification}
\makeatother

\subjclass[2020]{Primary 18M70, 18M85, secondary 05C65, 91A12.}

\address{M.M. and M.B.: Mathematical Institute, 
The Czech Academy of Sciences, {\v Z}itn{\'a} 25,\hfill\break 
         115 67 Prague 1, The Czech Republic}
\email{bataninmichael@gmail.com, batanin@math.cas.cz, markl@math.cas.cz}
\address{J.O.:  Mathematical Institute SANU,
   Knez Mihailova 36, p.f. 367, 11001 Belgrade,
   Serbia}
\email{joavana@mi.sanu.ac.rs}

\begin{abstract}
We construct explicit minimal models 
for the (hyper)operads governing modular, cyclic and ordinary
operads, and wheeled properads, respectively.
Algebras for these models are homotopy 
versions of the corresponding structures.
\end{abstract}

\maketitle
\bibliographystyle{plain}

\setcounter{secnumdepth}{3}
\setcounter{tocdepth}{1}


\tableofcontents

\section*{Introduction}

The fundamental feature of Batanin-Markl's theory of operadic
categories~\cite{duodel} is that the objects under
study are viewed as algebras over (generalized) operads in a specific
operadic category, cf.\ also the introduction to \cite{SydneyI}.
Thus, for instance, ordinary operads arise as algebras over the terminal
operad ${\sf 1}_\RTr$ in the operadic category $\RTr$ of rooted
trees, modular operads are algebras over the terminal operad
$\termggGrc$ in the operadic category $\ggGrc$ 
of genus-graded connected graphs, \&c. 

\vskip .5em
\noindent 
{\bf Our aim} is to construct explicit minimal models 
for the (hyper)operads governing modular, cyclic and ordinary
operads, and wheeled properads. We believe that the methods developed
here can be easily modified to obtain minimal models for operads
governing other common operad- or PROP-like
structures. According to general philosophy~\cite{haha},
algebras for these models describe strongly 
homotopy versions of the corresponding objects
whose salient feature is the transfer property over
weak homotopy equivalences. This might be compared to the following
classical~situation. 

Associative algebras are algebras over the non-$\Sigma$ operad $\Ass$. Algebras
over the minimal model of  $\Ass$ are Stasheff's strongly homotopy
associative algebras, also called $A_\infty$-algebras, 
\hbox{cf.~\cite[Example~4.8]{zebrulka}.} This
situation fits well into the framework of the current article,
since $\Ass$ is the terminal non-$\Sigma$ operad or, which is the same,
the terminal operad in the operadic category of finite
ordered sets and their order-preserving epimorphisms.

The case of strongly homotopy cyclic operads  was treated by the
third author
in~\cite{JO},
 while modular operads were addressed by B.~Ward
in~\cite{ward}. Both articles use the language of colored operads while 
the operadic category lingo used here is, as we
believe, more concise and efficient, after the necessary preparatory
material developed in~\cite{SydneyI, SydneyII} has been available.

\vskip .5em
\noindent 
{\bf Comparison with other approaches and the context.}
One of the major challenges of the theory of algebraic operads is to
understand their strongly homotopy versions, also called
$\infty$-operads or higher homotopy operads, and the related
deformation theory.  All approaches known to us are based on
the interpretation of the operads in question as algebras over a
specific `hyperoperad.' The strongly homotopy versions then appear as
algebras over a cofibrant, in some cases even minimal, resolution of
that hyperoperad. To construct the required resolutions, one tries to
mimics the methods of
the theory of `classical' algebraic~operads.  

Below we give a brief
account of the approaches preceding the present article, and compare
them to ours. The flavor of $\infty$-operads in spaces, see
e.g.~\cite{Lu,MW}, is quite different, so we do not discuss them here.

The first work that systematically treated operads as algebras over
a `hyperoperad' was
the 2003 preprint by P.~van der Laan~\cite{laan:03} who interpreted
nonsymmetric (non-$\Sigma$) operads as algebras over a~colored 
operad $\onsOp_{\mathbb N}$, with natural 
numbers ${\mathbb N}$ as the set of colors. More generally, nonsymmetric
$\frakC$-colored operads are algebras over a colored operad $\onsOp_\frakC$
with the colors
\[
\frakC^+ = \left\{
\left(
  \begin{array}{c}
    c
\\
c_1,\ldots,c_n
  \end{array}
\right), \ c, c_1,\ldots,c_n \in \frakC,
\ n \in {\mathbb N}\right\},
\]
where $c_1,\ldots,c_n$ are the input colors, and $c$ the color of
the output. In turn, nonsymmetric
$\frakC^+$-colored operads are algebras over a
$\frakC^{++}$-colored operad $\onsOp_{\frakC^{+}}$, where 
\[
\frakC^{++} =
 \left\{\left(
  \begin{array}{c}
    c^+
\\
c^+_1,\ldots,c^+_n
  \end{array}
\right), \ c^+, c^+_1,\ldots,c^+_n \in \frakC^+,
\ n \in {\mathbb N}\right\}, 
\]
and so on. Van der Laan proved that $\onsOp_{\mathbb N}$ is quadratic
Koszul, with the binary generators
\begin{equation}
\label{Neco se sitnici?}
\circ_i \in \colorop {\onsOp}(m,n; m+n-1),\ m,n \in  {\mathbb N}, \
1 \leq i \leq m,
\end{equation}
representing the partial compositions
\[
\circ_i : \oP(m) \ot \oP(n) \longrightarrow \oP(m\!+\!n\!-\!1),\ m,n \in  {\mathbb N}, \
1 \leq i \leq m.
\]

The operad $\onsOp_\frakC$ with an arbitrary set
of colors admits a similar presentation. 
Since $\onsOp_\frakC$ is quadratic Koszul, it has a nice
canonical resolution whose algebras
are strongly homotopy non-symmetric $\frakC$-colored operads. An independent
combinatorial description of this  resolution  was
given by the
third author in~\cite{JO}.

The case of {\em symmetric\/} operads is dramatically different. They
are algebras over an ${\mathbb N}$-colored operad $\oOp$ generated,
along with the quadratic generators~(\ref{Neco se sitnici?}), also by
the linear~ones
\begin{equation}
\label{Za chvili na pohotovost - bojim se moc.}
g_\sigma \in \colorop {\oOp}(n;n), \ n \in  {\mathbb N},\ \sigma \in
\Sigma_n \setminus \{1_n\},
\end{equation}
encoding the symmetric group action.
A cofibrant resolution of $\oOp$ was described, using the curved
Koszul duality, by M.~Dehling and B.~Vallette in the
fascinating article~\cite{deh-val}. Strongly
homotopy symmetric operads appearing in this way 
involve also resolutions of the symmetric group action.

Since our main applications, such as
modular operads or wheeled PROPs, live over a field of characteristic zero,
we do not want to touch the actions of the symmetric groups, but
have them hidden in the toolbox.   
Good analogy is the Koszul duality theory for
algebraic operads~\cite[Chapter~3]{MSS}, where the symmetric group
actions enter the picture 
already at the level of the generating collections.

Group actions can be incorporated with the use of (hyper)operads
whose colors are objects of groupoids.
For instance, such a  groupoid-colored operad governing symmetric operads
does not require 
generators~(\ref{Za chvili na pohotovost - bojim se moc.}), since the
symmetric group actions appear as the  morphism spaces
\[
{\rm Hom}_\bbN(m,n) =
\begin{cases}
\emptyset & \hbox {if $m \not = n$, and}
\\
\Sigma_m   & \hbox {if $m =n$.}
\end{cases}
\]
between the colors in $\bbN$.  Groupoid-colored operads are
equivalent to Feynman categories of Kaufman and Ward \cite{BKW,KW}
and, indeed, a foundation of Koszul duality for operads in the context
of Feynman categories has been developed in \cite{KW,ward0,ward}.

In our approach, the `hyperoperads' are operads, in the generalized
sense of~\cite[Definition~1.11]{duodel}, over certain
operadic categories of graphs which already contain the symmetric group 
actions as particular automorphisms.  Having the symmetric group
actions swept under the carpet, our setup is
analogous to the classical theory of algebraic operads in
characteristic~zero.

Let us explain how symmetric operads used above as an example 
are treated in our approach. 
They appear as algebras for the terminal operad $\termRTr$ over the
operadic category $\RTr$ of rooted trees, see Subsection~\ref{Prvni
  tyden v Berkeley konci.} for details. 
The vertices of the trees in $\RTr$ 
are linearly ordered, and also the incoming edges of each vertex
and the legs are
linearly ordered by the local resp.~global orders, cf.~Figure~\ref{Pristi tyden mne ceka laborator.}. 

The orders  determine  which operation a given rooted tree
represents.
For instance, the tree in Figure~\ref{Pristi tyden mne
  ceka laborator.} represents the operation that to elements $a_1,a_2 \in
\oP(2)$ and $a_3 \in \oP(3)$  of 
a symmetric operad $\oP$ assigns the operation  in $\oP(5)$  acting on the
`variables' $x_1,\ldots,x_5$ as
$
a_2\big(a_3(x_4,x_1,x_2),a_1(x_5,x_3)\big).
$
The category $\RTr$ contains $2! \! \times\! 2!\! \times\! 3!$ rooted trees of
the same shape, with the same global order, with the same order of the vertices,
but with possibly different local orders. All these trees are related by `local
isomorphisms' which incorporate the symmetric groups actions to  our approach.
A schematic picture of a
configuration of `pancakes' describing operations of cyclic 
operads can be found in~\cite[pages~95-96]{DMJ}.
\begin{figure}[t]
  \centering
\[
\psscalebox{1.0 1.0} 
{
\begin{pspicture}(0,-2.805)(7.6,2.805)
\definecolor{colour0}{rgb}{0.99215686,0.9843137,0.9843137}
\definecolor{colour0}{rgb}{0.92156863,0.9137255,0.9137255}
\psline[linecolor=black, linewidth=0.04, arrowsize=0.21667cm 2.0,arrowlength=1.7,arrowinset=0.0]{<-}(3.4,2.195)(3.4,0.195)
\psline[linecolor=black, linewidth=0.04, arrowsize=0.05291667cm 2.0,arrowlength=1.4,arrowinset=0.0]{<-}(3.4,0.195)(1.6,-0.405)
\psline[linecolor=black, linewidth=0.04, arrowsize=0.05291667cm 2.0,arrowlength=1.4,arrowinset=0.0]{<-}(1.6,-0.405)(0.4,-2.205)
\psline[linecolor=black, linewidth=0.04, arrowsize=0.05291667cm 2.0,arrowlength=1.4,arrowinset=0.0]{<-}(1.6,-0.405)(2.6,-2.205)
\psline[linecolor=black, linewidth=0.04, arrowsize=0.05291667cm 2.0,arrowlength=1.4,arrowinset=0.0]{<-}(3.4,0.195)(5.4,-0.405)
\psline[linecolor=black, linewidth=0.04, arrowsize=0.05291667cm 2.0,arrowlength=1.4,arrowinset=0.0]{<-}(5.4,-0.405)(3.8,-2.205)
\psline[linecolor=black, linewidth=0.04, arrowsize=0.05291667cm 2.0,arrowlength=1.4,arrowinset=0.0]{<-}(5.4,-0.405)(5.6,-2.205)
\psline[linecolor=black, linewidth=0.04, arrowsize=0.05291667cm 2.0,arrowlength=1.4,arrowinset=0.0]{<-}(5.4,-0.405)(7.2,-2.205)
\pscircle[linecolor=black, linewidth=0.04, fillstyle=solid,fillcolor=colour0, dimen=outer](5.4,-0.405){0.4}
\pscircle[linecolor=black, linewidth=0.04, fillstyle=solid,fillcolor=colour0, dimen=outer](3.4,0.195){0.4}
\pscircle[linecolor=black, linewidth=0.04, fillstyle=solid,fillcolor=colour0, dimen=outer](1.6,-0.405){0.4}
\rput(1.6,-0.45){$a_1$}
\rput(3.4,0.15){$a_2$}
\rput(5.4,-0.43){$a_3$}
\psframe[linecolor=black, linewidth=0.04, fillstyle=solid,fillcolor=colour0, dimen=outer](7.6,-2.205)(6.8,-2.805)
\psframe[linecolor=black, linewidth=0.04, fillstyle=solid,fillcolor=colour0, dimen=outer](6.0,-2.205)(5.2,-2.805)
\psframe[linecolor=black, linewidth=0.04, fillstyle=solid,fillcolor=colour0, dimen=outer](4.2,-2.205)(3.4,-2.805)
\psframe[linecolor=black, linewidth=0.04, fillstyle=solid,fillcolor=colour0, dimen=outer](3.0,-2.205)(2.2,-2.805)
\psframe[linecolor=black, linewidth=0.04, fillstyle=solid,fillcolor=colour0, dimen=outer](0.8,-2.205)(0.0,-2.805)
\psframe[linecolor=black, linewidth=0.04, fillstyle=solid,fillcolor=colour0, dimen=outer](4,2.795)(2.85,2.195)
\rput[t](0.4,-2.4){$x_5$}
\rput[t](2.6,-2.4){$x_3$}
\rput[t](3.8,-2.4){$x_2$}
\rput[t](5.6,-2.4){$x_4$}
\rput[t](7.2,-2.4){$x_1$}
\rput[bl](3.05,2.32){{\rm root}}
\rput[bl](0.8,-1.2){\scriptsize 1}
\rput[bl](2.2,-1.2){\scriptsize 2}
\rput[bl](2.4,0.05){\scriptsize 2}
\rput[bl](4.6,0.025){\scriptsize 1}
\rput[bl](4.3,-1.2){\scriptsize 3}
\rput[bl](5.7,-1.605){\scriptsize 1}
\rput(6.4,-1.005){\scriptsize 2}
\end{pspicture}
}
\]
\caption{\label{Pristi tyden mne ceka laborator.}A tree in $\RTr$.}
\end{figure}

\vskip .5em
\noindent 
{\bf Warning.} 
As in the approach based on colored operads, the
objects we study appear in our approach as algebras for a
certain, in most but not all cases terminal, `hyperoperad' over a
suitable operadic category, though they themselves {\it need not}  be operads in
the sense of~\cite{duodel}. Thus, for instance, there is no operadic
category having cyclic operads as its operads, but cyclic operads are
algebras for the terminal `hyperoperad' over the operadic category of trees.   

\vskip .5em
\noindent 
{\bf The models.} Here we point to the places
where the advertised constructions can be found. 

$\bullet$
The minimal model
$\minggGrc$ of the operad $\termggGrc$ governing modular operads 
is constructed in Subsection~\ref{Podari se mi koupit to auto?}. 
Algebras for this minimal model are strongly homotopy modular operads.

$\bullet$
The minimal model
$\minTr$ of the operad $\termTr$ governing cyclic operads is constructed in
Subsection~\ref{Dnes prvni vylet na kole}. 
Algebras for this minimal model are strongly homotopy cyclic operads.

$\bullet$
The minimal model
$\minWhe$ of the operad $\termWhe$ governing wheeled properads is constructed in
Subsection~\ref{Poslu Jarce obrazky kyticek.}. 
Algebras for this minimal model are strongly homotopy wheeled properads.

$\bullet$
There are two operadic categories such that the algebras
for their terminal operads are ordinary operads -- the category
$\RTr$ of rooted trees and its full subcategory $\SRTr$ of strongly rooted
trees. The minimal models $\minRTr$ resp.\
$\minSRTr$ of the corresponding terminal operads $\termRTr$ resp.\ 
$\termSRTr$ are constructed in
Subsections~\ref{Prvni tyden v Berkeley konci.} resp.~\ref{Ze by byla
  monografie uz konecne dokoncena?}. Both  $\minRTr$ and
$\minSRTr$ have the same algebras, namely
strongly homotopy ordinary operads. The reason why
we consider two categories governing the same structures is explained below.

\vskip .5em
\noindent {\bf Methods used.}  We begin with the particular case
of the operadic category $\Grc$ of connected graphs. Algebras for the
terminal operad $\termGrc$ in that category are modular operads
without the genus grading. We explicitly
define, in Section~\ref{hadrova_panenka}, a~minimal $\Grc$-operad
$\minGrc = (\Free(D),\pa)$ and a map
$\minGrc\stackrel\rho\longrightarrow \termGrc$ of differential graded
$\Grc$-operads. Theorem~\ref{Woy-Woy} states that $\rho$ is 
a~level-wise homological isomorphism, meaning that $\minGrc$ is a
minimal model of $\termGrc$. Proof of Theorem~\ref{Woy-Woy} is a
combination of the following~facts.

On one hand, using the apparatus developed in \cite{SydneyII}, we
describe, in Subsection~\ref{Dnes_je_Michalova_oslava.}, the piece
$\Free(D)(\Gamma)$, $\Gamma \in \Grc$, of the free operad $\Free(D)$
as a colimit over the poset ${\gTr}(\Gamma)$ of graph-trees
associated to $\Gamma$, which are abstract trees whose vertices are
decorated by graphs from $\Grc$ and which fulfill suitable
compatibility conditions involving $\Gamma$.

On the other hand, to each $\Gamma \in \Grc$ we associate, in
Subsection~\ref{Srni}, a hypergraph
$\bfH_\Gamma$ and to that hypergraph a poset ${\mathcal
  A}(\bfH_\Gamma)$ of its constructs, which are certain abstract trees
with vertices decorated by subsets of the set of internal edges of
$\Gamma$. We prove, in Proposition~\ref{jovanica}, 
that the poset  ${\gTr}(\Gamma)$  
is order-isomorphic to the poset ${\mathcal A}(\bfH_\Gamma)$.

Lemma~\ref{abspol} 
asserts that ${\mathcal
  A}(\bfH_\Gamma)$ is in turn order-isomorphic to the face lattice of a certain convex
polytope ${\mathcal C}(\bfH_\Gamma, \Pi)$. The construction of this polytope is a generalization of the construction from \cite{CIO} and has an interesting interpretation in terms of game theory. This game theoretic interpretation is not a central theme of our paper but we decided to include a brief description of this topic because it opens up some new perspectives on operad theory and  can also be useful in calculations. 

Finally, using the `ingenious'
Lemma~\ref{Jeste_ani_nevim_kde_budu_v_Melbourne_bydlet.}, 
we show that the faces of ${\mathcal C}(\bfH_\Gamma, \Pi)$ can be
oriented so that the cellular chain complex of ${\mathcal C}(\bfH_\Gamma, \Pi)$ is isomorphic, as a differential graded vector
space, to  $(\Free(D)(\Gamma),\partial)$. Since  ${\mathcal C}(\bfH_\Gamma, \Pi)$ is
acyclic in positive dimension, the same must be true for
$(\Free(D)(\Gamma),\partial)$. It remains to show that $\rho$ induces
an isomorphism of degree $0$ homology, but this is simple. The
conclusion 
is that $\minGrc$ is indeed a minimal model of $\termGrc$.

In constructing the minimal models of the terminal operads $\termggGrc$,
$\termTr$ and $\termWhe$ in the operadic categories $\ggGrc$ of genus-graded
connected graphs, $\Tr$ of trees and $\Whe$ of 
ordered (`wheeled') connected graphs, respectively, 
we use the fact observed in~\cite[Section~4]{SydneyI} that
these categories are discrete operadic opfibrations over $\Grc$.
Their minimal models are then, thanks to
Corollary~\ref{grantova_zprava}, the restrictions of the minimal model for $\termGrc$
along the corresponding opfibration map. 

The situation of the terminal operad $\termRTr$ in the operadic
category $\RTr$ of rooted trees is different, since this category is 
not an opfibration over $\Grc$. It is, however, a 
discrete operadic fibration with finite fibers, so
Corollary~\ref{grantova_zprava} of Section~\ref{Mourek a Terezka} applies as well.

We finally introduce a full subcategory $\SRTr \subset \RTr$
consisting of strongly rooted trees. The algebras for the terminal
$\SRTr$-operad $\termSRTr$ are the same as $\termRTr$-algebras, i.e.\
ordinary operads.  We consider this subcategory since it is the most
economic description of ordinary operads. Although it is neither a
fibration, nor an opfibration over~$\Grc$, we show in
Subsection~\ref{Prvni tyden v Berkeley konci.} that the minimal model
for $\termSRTr$ can be obtained by a straightforward modification of
the construction of the minimal model for $\termGrc$ given
in~Section~\ref{hadrova_panenka}.

\vskip .5em
\noindent {\bf Limitations and generalizations.}
Minimal models studied via the methods developed in the present
work appear as the cellular chain complexes of sequences of contractible
polytopes. Since the homology of
such complexes is a one-dimensional vector space sitting in
degree $0$,  our approach clearly applies only to (hyper)operads that are
terminal in an appropriate category of (hyper)operads.
This limitation however still leaves room for the study of structures such as  
non-symmetric modular operads and modular hybrids introduced
in~\cite{OC}, 
dioperads~\cite{gan}, and a couple of others not
addressed in the present article. 

An interesting situation occurs for terminal (hyper)operads that are 
quadratic but not Koszul self-dual. 
This is the case, e.g., of the operad $\termggGrc$
governing modular operads. The proof of~\cite[Theorem~9.6]{SydneyII} 
establishing the Koszulity of $\termggGrc$ uses the explicit minimal
model $\minggGrc$ constructed in the present paper. The dual dg operad
${\mathbb D}(\termggGrc)$ then in turn provides an explicit minimal model for the
Koszul dual of $\termggGrc$, which is the non-terminal 
operad~$\oddGr$ whose algebras are odd modular
operads. See Sections~5 and~9 of~\cite{SydneyII} for the terminology
and definitions. 
The methods of the present paper
may therefore lead to explicit minimal models for some non-terminal
(hyper)operads as well.

\vskip .5em
\noindent 
{\bf Applications.} As explained e.g.\ in~\cite{markl12:defor} or~\cite{ib}, 
an explicit minimal model of a traditional operad $\oP$ leads 
to an explicit $L_\infty$ (= strongly homotopy Lie) 
algebra which, via the related simplicial
Maurer-Cartan space, provides full information about the moduli space
of deformations of $\oP$-algebras. We believe that the same is true
also in the generalized context of this paper.

In particular, the constructions presented here should provide
understanding of deformations of modular, cyclic and traditional
operads, as well as wheeled properads, including the associated
cohomology theory and higher homotopy operations analogous to the Massey
products for modular operads constructed in~\cite{ward}. Since all
minimal models constructed here possess quadratic differentials, the
governing $L_\infty$-algebra is actually just the `ordinary' differential
graded Lie algebra, which makes the related theory conceptually very simple.

\vskip .5em
\noindent 
{\bf Plan of the paper.} In Section~\ref{mexicke pivo} 
we recall necessary facts
about hypergraph polytopes, and free operads in operadic categories.
Section~\ref{hadrova_panenka} 
is devoted to the construction of the minimal model for
the terminal $\Grc$-operad, and presentation of the necessary
preparatory material. Section~\ref{Mourek a Terezka} 
addresses minimal models for
terminal operads in the operadic categories of genus-graded graphs,
trees, wheeled graphs and strongly rooted trees.

\vskip .5em
\noindent
{\bf Conventions.} 
All algebraic objects will be considered over a
field~$\bbk$ of characteristic zero. 
By $|X|$ we denote either the cardinality if $X$ is a finite set, or the geometric
realization if $X$ is a graph. If not specified otherwise,  
(hyper)operads featured here will
live in the monoidal category of differential graded $\bfk$-vector spaces. The {\em
  terminal operad\/} in a given operadic category is the one all of
whose components equal
$\bbk$ and whose structure operations are the identities. These operads are
linearizations of the corresponding terminal set-operads, which hopefully
justifies our relaxed terminology.

\noindent
{\bf Acknowledgment.}
We express our gratitude to the anonymous referee for his/her 
useful suggestions and comments that 
led to substantial improvement of our~paper. 

\section{Recollections}
\label{mexicke pivo}

This section contains a preparatory material regarding hypergraph
polytopes and operadic categories.    
The basic references are~\cite{CIO,JO} for the former
and~\cite{SydneyI,SydneyII,duodel} for the latter.

\subsection{Hypergraph polytopes}
They are abstract polytopes whose geometric realization can be
obtained by truncating the vertices, edges and other faces of
simplices, in any finite dimension. In particular, the family of
$n$-dimensional hypergraph polytopes consists of an interval of simple
polytopes starting with the $n$-simplex and ending with the
$n$-dimensional permutahedron.

 \subsubsection*{Hypergraph terminology}
 
 For a set $H$ let ${\mathcal P}(H)$ be its power-set.
 
 A hypergraph is a pair ${\bf H}=(H,{\bf H})$ of a finite set $H$ of
 {\em vertices} and a subset
 ${\bf H}\subseteq {\mathcal P}(H)\backslash\emptyset$ of {\em
   hyperedges}, such that, for all
 $x\in H$, $\{x\}\in {\bf H}$ (note that this property implies 
that $\bigcup {\bf H}=H$ and  justifies the
 convention to use the bold letter ${\bf H}$ for both the hypergraph
 itself and its set of hyperedges). A hypergraph ${\bf H}$ is {\em
   connected} if there are no non-trivial partitions $H=H_1\cup H_2$,
 such that
 \[
{\bf H}=\{X\in {\bf H}\,|\, X\subseteq H_1\}\cup\{Y\in {\bf H}\,|\,
 Y\subseteq H_2\}.
\] 
A hypergraph ${\bf H}$ is {\em saturated} when,
 for every $X,Y\in{\bf H}$ such that $X\cap Y\neq\emptyset$, we have
 that $X\cup Y\in {\bf H}$. Every hypergraph can be saturated by
 adding the missing (unions of) hyperedges. Let us introduce the
 notation $${\bf{H}}_X:=\{Z\in {\bf{H}}\,|\, Z\subseteq X\},$$ for a
 hypergraph ${\bf H}$ and $X\subseteq H$.  The {\em saturation} of
 ${\bf{H}}$ is then formally defined as the hypergraph
$${\it Sat}({\bf{H}}):=\{ X\,|\, {\emptyset\subsetneq X\subseteq H\;\mbox{and}\;{\bf{H}}_X\;\mbox{is connected}}\}.$$

For a hypergraph ${\bf H}$ and $X\subseteq H$, we  also set
 $${\bf H}\backslash X:={\bf{H}}_{H\backslash X}.$$ Observe that for each finite hypergraph there exists a partition
$H=H_1\cup\ldots\cup H_m$, such that each hypergraph ${\bf{H}}_{H_i}$ is connected and ${\bf{H}}=\bigcup({\bf{H}}_{H_i})$.  The ${\bf{H}}_{H_i}$'s are called the {\em connected components} of ${\bf{H}}$. We shall  write
${\bf{H}}_i$  for
${\bf{H}}_{H_i}$.     We shall use the notation 
\begin{center}
${\bf{H}}\backslash X  \leadsto {\bf H}_1,\ldots,{\bf H}_n$ 
\end{center}
to indicate that  ${\bf H}_1,\ldots,{\bf H}_n$ are  the   connected components of ${\bf H}\backslash X$.  

\subsubsection*{Abstract polytope of a hypergraph}  
We next recall from \cite{CIO} the definition of the abstract polytope
$${\mathcal A}({\bf H})=(A({\bf H})\cup\{\emptyset\},\leq_{\bf H})$$
associated to a connected hypergraph ${\bf H}$.
 
 \medskip
 
The elements of the set $A({\bf H})$, to which we refer as the {\em  constructs} of   ${\bf H}$, are  the non-planar, vertex-decorated rooted trees defined recursively as follows. 
  \begin{itemize}
  \item[{\texttt{(C0)}}] If ${\bf H}$ is the empty hypergraph, then
    $A({\bf H})=\{\emptyset\}$, i.e. ${\mathcal A}({\bf H})$ is the
    singleton poset containing $\emptyset$.
\end{itemize}

 Otherwise, let $\emptyset\neq X\subseteq H$ be a subset of the set of vertices of ${\bf H}$.
  \begin{itemize}
  \item[{\texttt{(C1)}}] If $X=H$, then the abstract rooted tree with a single vertex   labeled by $X$ and without any  inputs, is a construct of ${\bf H}$; we denote it by $H$.
  \item[{\texttt{(C2)}}] If $X\subsetneq H$,   if   ${\bf H}\backslash X  \leadsto {\bf H}_1,\ldots,{\bf H}_n$, and if  $C_1,\ldots,C_n$ are constructs of ${\bf H}_1,\ldots,{\bf H}_n$, respectively, then the
tree whose root vertex is decorated by $X$ and that has $n$ inputs, on which the respective $C_i\,$'s are grafted,  
 is a construct of ${\bf H}$; we denote it by $X\{C_1,\ldots,C_n\}$.
  \end{itemize}

In what follows, we shall refer to the vertices of constructs by the sets
decorating them, since they are a fortiori all distinct. The notation 
$C:\bf{H}$  will mean that  $C$ is a construct of~$\bf{H}$. 
  
\medskip The partial order $\leq_{\bf H}$ on non-empty constructs is
generated by the 
edge-contraction:
\[
Y\{X\{C_{11},\ldots,C_{1m}\},C_2,\ldots,C_n\}\leq_{\bf H}(Y\cup
X)\{C_{11},\ldots,C_{1m},C_2,\ldots,C_n\}
\]
and the relation
\[
\hbox{if $C'_1 \leq_{\bf H_1} C''_1$ \ then \ $X\{\rada {C_1'}{C_n}\}
\leq_{\bf H}X\{\rada {C_1''}{C_n}\}$}.
\]
In addition, for each
construct $C$ of ${\bf H}$, we have that $\emptyset\leq_{\bf H} C$.

\smallskip

The faces of ${\mathcal A}({\bf H})$ are ranked by integers  ranging
from $-1$ to $|H|-1$. The face $\emptyset$ is the unique face of rank
$-1$, whereas the rank of a construct $C$ is  $|H|-|\mbox{vert}(C)|$.
In particular, constructs  whose vertices are all decorated with singletons
are faces of rank $0$, whereas  the  construct $H$   
is the unique face of rank $|H|-1$.
 
 \subsubsection*{Convex realization of ${\mathcal A}({\bf H})$ as core
   of a game}   

We recall some terminology of cooperative game theory \cite{Sh}. Let
$H$ be a finite set. A cooperative game of $n=|H|$ players is a
function 
\[
\Pi: {\mathcal P}(H)\backslash\emptyset \to \mathbb{R}_{\geq 0}.
\] 
A classical  interpretation of such a game is that every nonempty
subset $I = \{i_1,\ldots,i_k\}\in {\mathcal P}(H)\backslash\emptyset $
(called {\it coalition of players}) has certain utility $\Pi(I)$
(blocking power of the coalition) in its disposition which can be
distributed among members of $I.$   An outcome of such a distribution
is a real valued vector $(x_{i_1},\ldots , x_{i_k})\in
\mathbb{R}^{I}_{\geq 0}$ such that $\sum_{i\in I} x_i \le \Pi(I)$.  
Let 
\[
\textstyle
\pi^+_I:=\big\{(x_1,\dots,x_n) \in \mathbb{R}^{\times n}_{\geq 0}\,|\, \sum_{i\in I}x_i\geq \Pi(I)\big\}
\]
and
\[
\textstyle
\pi_I:=\big\{(x_1,\dots,x_n) \in \mathbb{R}^{\times n}_{\geq 0}\,|\, \sum_{i\in I}x_i =\Pi(I)\big\}.
\]
Then  {\it the core of the  game} is defined as a convex set
  \[
{\mathcal C}(\Pi):=\bigcap_{I\in \mathcal{P}(H)\backslash\{H\}} \pi^+_I \cap \pi_H.
\] 
The vectors of this set can be interpreted as some sort of stable distributions among players, where there is no intention among players to form a smaller coalition which can deliver a~better distribution of utility for its members, that is, no smaller coalition would wish to block such a distribution.

A cooperative game is called {\it convex} if the inequality 
\[
\Pi(X\cup Y) \ge \Pi(X) + \Pi(Y) - \Pi(X\cap Y)
\] 
holds for all $X,Y\in \mathcal{P}(H)\backslash\emptyset$. For a {\em
  strictly convex game\/} (also known as {\em upper supermodular
  function\/} in lattice theory) this inequality holds strictly for all $X,Y$
such that $X$ is not a subset of $Y$ or $Y$ is not a subset of $X$.  A
classical result of Shapley~\cite{Sh} is that the core of a convex
game is nonempty and, moreover, the core of a strictly convex game is
an $(n-1)$-dimensional convex polytope which is combinatorially equivalent to the permutahedron on $n$ letters.  

\begin{example}\label{G} Let $\mathcal{G}$ be a cooperative game given by the  function $\mathcal{G}(I) = 3^{|I|}$ for a  coalition $I\subset H.$ It is not hard to check that this is a strictly convex game. In fact, this was checked implicitly by  Do\v sen and Petri\'c in \cite[Lemma 9.1]{DP-HP}. 
\end{example}

\begin{example} \label{L} Let $\mathcal{L}$ be a cooperative gave
  given by the  function 
\[
\mathcal{L}(I) = 1+2+ \cdots + |I| = |I|(|I|+1)/2.
\] 
This is again a strictly convex game which is easy to check. The core of this game is the classical convex realization of the permutahedron $P_n$ as the convex hull of \hbox{$\{\sigma(1,\ldots,n) \ | \ \sigma\in S_n\}$}.   

\end{example} 
The reader is referred to \cite{Sh} for a zoo of various examples of convex games.

 Let now ${\bf H}=(H,{\bf H})$ be a hypergraph. An {\it $\bf H$-cooperative game} is a cooperative game $\Pi$ of $|H|$-players. We call such a game {\it (strictly) convex} if $\Pi$ is (strictly) convex game. We now  want to adapt the concept of the core of such a game to take into account the hypergraph structure. Namely, we forcibly (that is by law) forbid to form coalitions which are not the hyperedges of the saturation of $\bf H.$  That is     
\[
{\mathcal C}({\bf H},\Pi):=\bigcap_{Y\in {\it Sat}({\bf
  H})\backslash\{H\}}\pi^+_I \cap \pi_H.
\] 
 Another way to say it is that we form a new game in which all coalitions which are not the hyperedges of the saturation of $\bf H$ have blocking power $0$ but the blocking power of other coalitions remains the same.  Obviously the core of such a game is precisely ${\mathcal C}({\bf H},\Pi).$ 
\begin{lemma}
\label{abspol} Let $\Pi$ be a strictly convex $\bf H$-cooperative game. 
The poset ${\mathcal A}({\bf H})$ is 
order-isomorphic to the face lattice of a convex polytope ${\mathcal C}({\bf H},\Pi)$
obtained as a truncation of the \hbox{$(|H|\!-\!1)$}-dimensional simplex. In
particular, ${\mathcal A}({\bf H})$ is
an abstract polytope of rank \hbox{$|H|\!-\!1$}.
\end{lemma}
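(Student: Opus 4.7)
I would proceed by induction on $|H|$, with the trivial base case $|H|=1$ where ${\mathcal C}({\bf H},\Pi)$ is a point and ${\mathcal A}({\bf H})$ has the single non-empty element $H$. For the inductive step, first I verify that ${\mathcal C}({\bf H},\Pi)$ is a genuine $(|H|-1)$-dimensional convex polytope sitting in the affine hyperplane $\pi_H$ intersected with $\bbR^{|H|}_{\geq 0}$. Strict convexity together with Shapley's non-emptiness theorem (adapted to the hypergraph game by declaring non-hyperedges to have blocking power $0$) ensures that each inequality $\pi^+_Y$, $Y \in {\it Sat}({\bf H})\setminus\{H\}$, is essential, and that $F_Y := {\mathcal C}({\bf H},\Pi) \cap \pi_Y$ is a genuine facet, distinct for different $Y$.

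The core of the argument is the product decomposition of facets: writing ${\bf H}\setminus Y \leadsto {\bf H}_1,\ldots,{\bf H}_n$, I claim
\[
F_Y \;\cong\; {\mathcal C}({\bf H}_Y,\Pi|_Y)\,\times\,\prod_{i=1}^{n}{\mathcal C}({\bf H}_i,\Pi^{(i)}),
\]
where $\Pi|_Y$ is the restriction and $\Pi^{(i)}(Z) := \Pi(Y\cup Z) - \Pi(Y)$; each restricted game inherits strict convexity. The non-trivial input is that any constraint $\pi^+_Z$ with $Z \in {\it Sat}({\bf H})$ straddling across $Y$ and some ${\bf H}_i$ becomes a strict inequality once the pieces $Z\cap Y$ and $Z\setminus Y$ are tight; this follows directly from the strict supermodular inequality applied to the pair $(Z\cap Y,\,Z\setminus Y)$. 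Hence only $Y$-internal and component-internal constraints cut out $F_Y$, yielding the claimed product.

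By the inductive hypothesis each factor has face lattice isomorphic to the corresponding ${\mathcal A}$-poset, and the face lattice of a product of polytopes is the product of the factor face lattices. Thus faces of $F_Y$ are parameterized by tuples $(D;C_1,\ldots,C_n)$ with $D$ a construct of ${\bf H}_Y$ and $C_i$ a construct of ${\bf H}_i$. Such a tuple assembles into a unique construct of ${\bf H}$ whose root is the root of $D$, and whose subtrees are obtained by splicing the $C_i$'s onto the branches of $D$ that exit through $Y$. Consistency of this assignment across different choices of $Y$ is enforced by the edge-contraction relation generating $\leq_{\bf H}$: contracting an edge in the construct tree corresponds precisely to passing from a face to the facet containing it, and componentwise monotonicity mirrors the product order on faces. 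The top face of ${\mathcal C}({\bf H},\Pi)$ corresponds to the one-vertex construct $H$ and the empty face to $\emptyset$. The realization as an iterated truncation of the $(|H|\!-\!1)$-simplex follows by building $\bf H$ up one hyperedge at a time in a linear extension of inclusion: the minimal connected hypergraph (only singletons and $H$) produces the simplex, and each subsequent hyperedge $Y$ truncates it along~$\pi_Y$.

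The main obstacle is the product decomposition of facets, which underpins both the non-redundancy of the defining inequalities and the recursive matching between face lattice and construct poset. Once the strict-convexity estimate enforcing redundancy of the mixed inequalities on $F_Y$ is carefully verified, together with the preservation of strict convexity under the restriction $\Pi|_Y$ and the shift $\Pi^{(i)}$, the rest of the argument is structural bookkeeping.
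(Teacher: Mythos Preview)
Your inductive strategy is natural and close to how the recursive structure of nestohedra is usually established, but the key product decomposition you state is not correct as written. For $Y\in{\it Sat}({\bf H})\setminus\{H\}$ with ${\bf H}\setminus Y\leadsto{\bf H}_1,\ldots,{\bf H}_n$, your claimed isomorphism
\[
F_Y \;\cong\; {\mathcal C}({\bf H}_Y,\Pi|_Y)\,\times\,\prod_{i=1}^{n}{\mathcal C}({\bf H}_i,\Pi^{(i)})
\]
has dimension $(|Y|-1)+\sum_i(|H_i|-1)=|H|-1-n$, whereas $F_Y$ is a facet of an $(|H|-1)$-dimensional polytope and hence has dimension $|H|-2$. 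These agree only when $n=1$. A concrete counterexample: take ${\bf H}$ the path $1\!-\!2\!-\!3$ and $Y=\{2\}$; then $n=2$, all three factors on your right-hand side are points, but the facet $x_2=\Pi(\{2\})$ is a $1$-dimensional edge. The correct second factor is not the product over the components of ${\bf H}\setminus Y$ but rather a single hypergraph polytope on the vertex set $H\setminus Y$, namely the \emph{reconnected complement} (equivalently, the contraction of $Y$ to a point), in which vertices of distinct ${\bf H}_i$'s that were adjacent to $Y$ become mutually adjacent. Your assembly rule is correspondingly inverted: for a construct $C\leq (H\setminus Y)\{Y\}$ the root of $C$ lies in $H\setminus Y$, while the ${\bf H}_Y$-construct sits as a subtree at the bottom, not at the root.

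Once the product formula is corrected in this way, your inductive scheme does go through; this is essentially the argument carried out in \cite{DP-HP} and \cite{CIO}, which the paper simply cites (for the particular game ${\mathcal G}(I)=3^{|I|}$, noting that the proof extends verbatim to any strictly convex $\Pi$). So your approach is not different in spirit from what underlies the references, but the facet-decomposition step as you stated it fails and needs to be repaired before the induction can proceed.
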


\begin{proof}
The order-isomorphism between the poset of constructs of ${\bf H}$ and
the poset of geometric faces of ${\mathcal C}({\bf H},\mathcal{G})$, where $\mathcal{G}$ is the game from the Example~\ref{G},  is defined in
\cite[Section 3.3]{CIO} (this polytope was denoted  ${\mathcal
  G}(\mathbf{H})$ there). The fact that  ${\mathcal A}({\bf H})$ is
an abstract polytope follows in this particular case 
from Lemma 9.1 of \cite{DP-HP}.
It is however not hard to check that the arguments 
of \cite{CIO} and \cite{DP-HP} work  for any strictly
convex game $\Pi$ instead of $\mathcal{G}$. 
The lemma  can also be deduced from the classical combinatorial description of the core by Shapely \cite{Sh}.
\end{proof}

\begin{remark} One can use the game $\mathcal L$ from Example \ref{L}
  to get Loday's realization of the associahedron and its generalizations,
  see \cite{CSZ,Laplante-Anfossi} for a survey and comparison of
  different convex realizations of generalized associahedra and other
  polytopes, and also~\cite{CD,Dev} or~\cite{sta} for earlier sources
  addressing the problem of geometric realization of polytopes.  
\end{remark} 

\subsection{Free operads in the operadic category of graphs}
\label{Dnes_je_Michalova_oslava.}

The basic operadic category in this section will be the category
$\Grc$ of connected ordered 
graphs introduced in \cite[Section~3]{SydneyI} and
Example~5.7 loc.~cit.\ to which we also refer for terminology and
notation. Results for
other categories of graphs will be  straightforward modifications of
this situation. Recall that the adjective {\em ordered\/} means that the (finite) 
set of vertices of $\GAmma$ is (linearly) ordered, 
as well as are the (finite) sets of half-edges 
adjacent to each vertex of $\GAmma$,
and that also the (finite) set of  legs of $\GAmma$ is ordered. To simplify
the terminology, by a {\em graph\/} we always mean in this section an
object of $\Grc$.
As the first step in describing the component 
$\Free(E)(\GAmma)$, $\GAmma \in \Grc$, of the free
operad $\Free(E)$ generated by a $1$-connected collection $E$ we
identify, in Theorem~\ref{Uz_nechci_letat_zakladni_vycvik.} below,  
the set $\pi_0(\lTw(\GAmma))$ of connected components of the
groupoid $\lTw(\GAmma)$ of labelled towers \cite[Section~3]{SydneyII}
with a certain class of trees defined~below. 
Recall that we work with a skeletal version of the category of finite
ordered sets, therefore two arbitrary order-isomorphic finite sets are the same.

Recall that a map of graphs is a {\em quasibijection\/} if all its
fibers are trivial, i.e.\ are corollas whose local and global
orders agree~\cite[Section~3]{SydneyI}. 
By~\cite[Lemma~3.15]{SydneyI}, all quasibijections in $\Grc$
are isomorphisms. A map of graphs is called {\em
  order-preserving\/} if the induced map of vertices preserves the
orders. An order-preserving map is {\em elementary\/} if all its
fibers are trivial except precisely one  which is
required to have at least one internal edge.

Before we continue, we introduce a particular class of maps between
graphs, called {\em canonical contractions\/} (or {\em cc's\/} for
short) of a subgraph. The informal definition is the following.

Let $\Gamma \subset \Gamma'$ be a subgraph and $\Gamma''$ be obtained
from $\Gamma'$ by contracting all internal edges of $\Gamma$ into a
vertex. The canonical
contraction $\pi: \Gamma' \to \Gamma''$ is then the `obvious
projection.' We however need to specify labellings and orders of the
vertices and flags of $\Gamma$ and $\Gamma''$, so a more formal
definition is needed.

Assume that $\Gamma'= (V',F')\in \Grc$ is a graph with the set of vertices
$V'$, the set of flags $F'$ and the structure map $g': F' \to V'$, see 
\cite[Definition~3.1]{SydneyI}. Choose a nonempty subset $V \subset V'$ and 
a~nonempty set $E$ of edges of $\Gamma'$ formed by the half-edges in $g'^{-1}(V)
\subset F'$ such that the subgraph of $\Gamma'$ spanned by $E$ is
connected. Let us denote by $V'/V$ the ordered set
\[
V'/V := (V' \setminus V) \cup \{\min (V)\};
\]
the notation being justified by the  canonical
set-isomorphism of $V/V'$ as above with the set-theoretic quotient $V'$
by the subset $V$. Let finally $V'' := V'/V$ 
and
\begin{equation} 
\label{Dnes_s_Jarkou_a_Oliverem.}
\phi : V' \to V'' = V'/V
\end{equation}
be the `projection' that is the identity on
$V' \setminus V$ while it sends all elements of $V$ to $\min (V)$.

We construct $\Gamma''$ as the graph whose set of
  vertices is $V''$ and whose set of flags is $F'':=
  F' \setminus E$.  The defining map $g'':F''\to V''$ is the
  restriction of the composite $\phi \circ g'$, as  in
\begin{equation}
\label{Treti den v Berkeley}
\xymatrix@C=3.5em{F' \ar[d]_{g'}  &  \  F'':=
  F' \setminus E \ar@{_{(}->}[l]_(.7){\psi}
\ar[d]^{g''}
\\
V' \ar@{->>}[r]^{\phi} &V''.
}
\end{equation}
The involution $\sigma'' : F'' \to F''$ is the restriction of the
involution $\sigma' : F' \to F'$ of $\Gamma'$.
The map $g''$ defined by~(\ref{Treti den v Berkeley})
is however not order-preserving as
required by the definition of a graph. We therefore reorder $F''$ by
imposing the lexicographic order requiring that, for  $a,b \in
F''$,
\[
a< b \hbox { if and only if } 
\begin{cases}
\hbox{$g''(a) < g''(b)$ in $V''$, or}
\\
\hbox{$g''(a) = g''(b)$ and $a < b$ in $F''$}.   
\end{cases}
\]
This formula obviously does not change the local orders of flags in
$F''$ around a given vertex.

We finally define the cc
$\pi:\Gamma'\to \Gamma''$ as the couple $(\psi,\phi)$ with
$\psi:F''\hookrightarrow F'$ the inclusion. The unique nontrivial
fiber of $\pi$ is the graph $\Gamma$ given by the restriction 
$F\stackrel{g}{\longrightarrow} V$ of $g'$ to $F := g'^{-1}(V)$
whose involution is trivial everywhere except for the flags forming the
edges in $E$, in which case it coincides with the involution of
$\Gamma'$. A simple example of a canonical contraction can be found in
Figure~\ref{Vcera jsem si koupil kolo.} below.
 
We may sometimes loosely denote $\Gamma'' := \Gamma'/\Gamma$. Canonical
contractions in the above sense are modifications of pure contractions
of \cite[Definition~3.5]{SydneyI} in that here we do not
require the map of vertices to be order-preserving, which is
compensated by introducing the lexicographic order on the flags of
$\Gamma''$. Canonical contractions are close to elementary morphisms
in that they have precisely one nontrivial fiber with at least one
nontrivial internal edge, but they need not be order-preserving. For
the purposes of the proofs in this 
article only, we will call such morphisms {\em
  pre-elementary\/}. By definition, a pre-elementary morphism is
elementary if and only if it is order-preserving. Canonical contractions provide
representatives of morphisms with the property specified in the
following lemma.

\begin{lemma}
\label{Svedeni neustava.}
Let $\tau: \Delta' \to \Delta''$ be a map in $\Grc$ whose all fibers
are corollas except precisely one (which thus has at least
one internal edge). Then there exists a unique
canonical contraction $\pi: \Delta' \to \Delta$ and a unique
isomorphism $\sigma : \Delta'' \to \Delta$ such that the diagram
\begin{equation}
\label{Jaruska si nezapla telefon.}
\xymatrix{&\Delta' \ar[ld]_\tau^(.4){\rm cc} \ar[rd]^\pi&
\\
\Delta'' \ar[rr]^\sigma_\cong && \Delta
}
\end{equation}
commutes.
\end{lemma}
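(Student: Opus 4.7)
The plan is to extract the subgraph to be contracted directly from~$\tau$ and to absorb any discrepancy between the canonical lexicographic ordering produced by the cc construction and the given ordering on $\Delta''$ into the isomorphism $\sigma$.

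\emph{Construction of $\pi$.} Let $v^*\in V(\Delta'')$ be the unique vertex whose fiber $\Gamma$ of $\tau$ has at least one internal edge. Put $V:=\phi_\tau^{-1}(v^*)\subset V(\Delta')$ and let $E\subset F(\Delta')$ be the complement of the image of $\psi_\tau$. By hypothesis both $V$ and $E$ are nonempty; moreover the subgraph of $\Delta'$ spanned by $E$ is $\Gamma$ with its legs removed, which is connected because $\Gamma \in \Grc$ has its vertices in $V$ all linked through internal edges (legs do not contribute to vertex connectivity). The pair $(V,E)$ therefore satisfies the requirements of~(\ref{Dnes_s_Jarkou_a_Oliverem.})--(\ref{Treti den v Berkeley}) and yields a canonical contraction $\pi:\Delta'\to\Delta:=\Delta'/\Gamma$ whose unique nontrivial fiber is again $\Gamma$.

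\emph{Construction of $\sigma$ and commutativity.} Both $\Delta''$ and $\Delta$ arise from $\Delta'$ by collapsing $V$ to a single vertex and discarding the flags in $E$, and they inherit the same involution from $\Delta'$; they differ only in the linear and local orderings on the resulting vertex and flag sets. Define $\sigma:\Delta''\to\Delta$ as the pair $(\psi_\sigma,\phi_\sigma)$ where $\phi_\sigma$ sends $v^*$ to $\min(V)$ and is the identity on $V(\Delta')\setminus V$, and where $\psi_\sigma$ is the tautological identification $F(\Delta)=F(\Delta')\setminus E=\psi_\tau(F(\Delta''))\cong F(\Delta'')$. Every fiber of $\sigma$ is then a one-vertex corolla, so $\sigma$ is a quasibijection and hence an isomorphism by~\cite[Lemma~3.15]{SydneyI}. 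The commutativity $\sigma\circ\tau=\pi$ of~(\ref{Jaruska si nezapla telefon.}) is then immediate: tracked through vertices, both sides coincide with the projection $V(\Delta')\twoheadrightarrow V(\Delta')/V$, and tracked through flags, both coincide with the inclusion $F(\Delta')\setminus E \hookrightarrow F(\Delta')$.

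\emph{Uniqueness.} In any diagram of the form~(\ref{Jaruska si nezapla telefon.}), the unique nontrivial fiber of the cc $\pi'$ must coincide with that of $\tau$, which forces the data $(V,E)$ and hence $\pi'=\pi$. With $\pi$ fixed, the equality $\sigma'\circ\tau=\pi$ determines $\sigma'$ on the image of $\tau$, which covers all vertices and all flags of $\Delta$, so $\sigma'=\sigma$. I expect the only technical subtlety to be verifying that $\sigma$ genuinely lives in $\Grc$ rather than being a bare bijection of underlying data: this reduces to checking that the lexicographic reordering on $F(\Delta)$ from the cc construction agrees, at each surviving vertex, with the restricted local order on $F(\Delta'')$ inherited from $\Delta'$ via $\psi_\tau$. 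Both orders descend from the same local orders on $\Delta'$ and can differ only by how the collapsed vertex $v^*\leftrightarrow\min(V)$ is globally indexed, which is exactly what the isomorphism $\sigma$ encodes.
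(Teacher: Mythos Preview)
Your proof is correct and follows essentially the same approach as the paper's: you identify the data $(V,E)$ for the canonical contraction from the unique non-corolla fiber of $\tau$, and then let $\sigma$ absorb the discrepancy in orderings. The paper's own proof is considerably terser---it simply declares $V:=\phi^{-1}(\{x\})$, $E:=F'\setminus F''$, and then asserts that ``it is simple to check'' the existence and uniqueness of $\sigma$ (symbolically $\tau^{-1}\pi$)---whereas you spell out the construction of $\sigma$, invoke the quasibijection criterion to see it is an isomorphism, and give an explicit uniqueness argument.
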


\begin{proof}
Assume that $\Delta' = (V',F')$, $\Delta'' = (V'',F'')$, 
and that
$\tau$ is given by the pair $(\phi,\psi)$ of maps in the diagram
\[
\xymatrix@C=3.5em{F' \ar[d]_{g'}  & F''\ar@{_{(}->}[l]_{\psi}  
\ar[d]^{g''}
\\
V' \ar@{->>}[r]^{\phi} &\ V''.
}
\]
Let the only fiber of $\tau$ which is not a corolla be the one over some $x\in
V''$.  Then the canonical contraction $\pi$ in the lemma is given by
the data
$V := \phi^{-1}(\{x\}) \subset V'$ and $E := F' \setminus F''$. It is
simple to check that there exists a unique isomorphism $\sigma$,
symbolically expressed as $\tau^{-1}\pi$, making
diagram~\eqref{Jaruska si nezapla telefon.} commutative and that the
canonical projection $\pi$ for which such an isomorphism exists 
is unique as well. 
\end{proof}

Let us return to the main topics of this section.
A  {\em graph-labelled tree\/}, or {\em graph-tree\/} for short, is a 
rooted tree $T$ 
such that the union of the sets of  input leaves and of internal edges 
is labelled by a finite ordered
set $V$ subject to the condition that
an internal edge $e$ of $T$ is labelled  by the minimum of the labels
of the input leaves of the subtree of $T$ `below' $e$, 
i.e.\ of the maximal subtree of $T$ away from the root of $T$ 
whose root vertex is $e$.
Moreover, vertices of a~graph tree $T$ are labelled by graphs
in $\Grc$. This labelling shall satisfy two conditions. 

\noindent 
{\em Compatibility 1.}
The ordered set of vertices of $\Gamma_u$ labelling a vertex $u$ of
$T$ equals the ordered set of the labels of the input edges of $u$. 

\noindent 
{\em Compatibility 2.} Let $e$ be an internal edge of $T$ pointing
from (the vertex labelled by)  $\Gamma_u$ to  (the vertex labelled by)  
$\Gamma_v$. Then the ordered set of the
half-edges of $\Gamma_v$ adjacent to its vertex corresponding to $e$
is the same as the ordered set of the legs of $\Gamma_u$.

Since we are going to study free operads generated by $1$-connected
collections only, we assume that the graphs labelling the vertices
of a graph-tree have at least one internal edge.

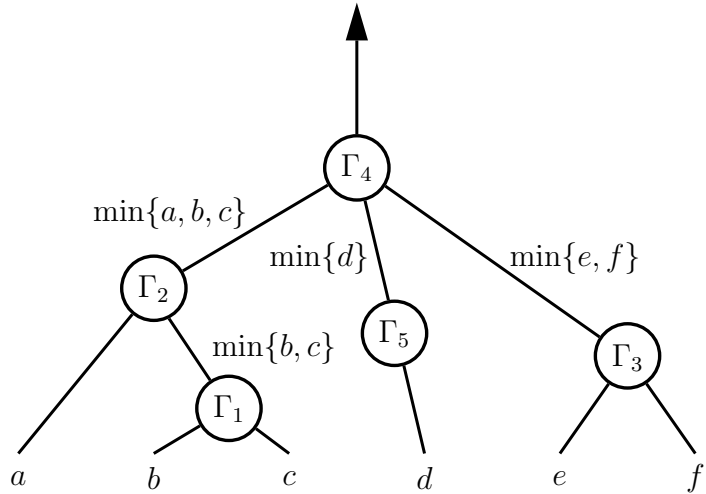
\begin{figure}[h]
\begin{center}
\psscalebox{1.0 1.0}
{
\begin{pspicture}(0,-3.130979)(9.17,3.130979)
\definecolor{colour0}{rgb}{0.92156863,0.9137255,0.9137255}
\psline[linecolor=black, linewidth=0.04, arrowsize=0.21667cm 2.0,arrowlength=1.7,arrowinset=0.0]{->}(4.58,1.0490209)(4.58,3.224021)
\psline[linecolor=black, linewidth=0.04, arrowsize=0.05291667cm 2.0,arrowlength=1.4,arrowinset=0.0]{->}(7.28,-2.775979)(8.18,-1.4884791)
\psline[linecolor=black, linewidth=0.04, arrowsize=0.05291667cm 2.0,arrowlength=1.4,arrowinset=0.0]{->}(9.08,-2.775979)(8.18,-1.4884791)
\psline[linecolor=black, linewidth=0.04, arrowsize=0.05291667cm 2.0,arrowlength=1.4,arrowinset=0.0]{->}(5.48,-2.775979)(4.58,1.0490209)
\psline[linecolor=black, linewidth=0.04, arrowsize=0.05291667cm 2.0,arrowlength=1.4,arrowinset=0.0]{->}(8.18,-1.4884791)(4.58,1.0490209)
\psdots[linecolor=black, fillstyle=solid, dotstyle=o, dotsize=0.4, fillcolor=white](8.18,-1.4759792)
\rput(0.08,-3.1){$a$}
\rput(1.88,-3.1){$b$}
\rput(3.68,-3.1){$c$}
\rput(5.48,-3.1){$d$}
\rput(7.28,-3.1){$e$}
\rput(9.08,-3.1){$f$}
\rput(3.48,-1.3759792){$\scriptsize {\rm min}\{b,c\}$}
\rput(2.1,0.42402086){$\scriptsize {\rm min}\{a,b,c\}$}
\rput(7.48,-0.17597915){${\rm min}\{e,f\}$}
\psline[linecolor=black, linewidth=0.04, arrowsize=0.05291667cm 2.0,arrowlength=1.4,arrowinset=0.0]{->}(0.08,-2.775979)(1.88,-0.5759792)
\psline[linecolor=black, linewidth=0.04, arrowsize=0.05291667cm 2.0,arrowlength=1.4,arrowinset=0.0]{->}(1.88,-0.5759792)(4.58,1.0240208)
\psdots[linecolor=black, fillstyle=solid, dotstyle=o, dotsize=0.4, fillcolor=white](4.58,1.0240208)
\psdots[linecolor=black, fillstyle=solid, dotstyle=o, dotsize=0.9, fillcolor=colour0](4.58,1.0240208)
\psdots[linecolor=black, fillstyle=solid, dotstyle=o, dotsize=0.9, fillcolor=colour0](8.18,-1.4759792)
\rput(4.58,1.0240208){$\Gamma_4$}
\rput(8.18,-1.5){$\Gamma_3$}
\psline[linecolor=black, linewidth=0.04](1.88,-2.775979)(2.88,-2.1759791)
\psline[linecolor=black, linewidth=0.04](3.68,-2.775979)(2.88,-2.1759791)
\psline[linecolor=black, linewidth=0.04](2.88,-2.1759791)(1.88,-0.67597914)
\psdots[linecolor=black, fillstyle=solid, dotstyle=o, dotsize=0.9, fillcolor=colour0](2.88,-2.1759791)
\psdots[linecolor=black, fillstyle=solid, dotstyle=o, dotsize=0.9, fillcolor=colour0](1.88,-0.5759792)
\rput(1.88,-0.5759792){$\Gamma_2$}
\rput(2.88,-2.1759791){$\Gamma_1$}
\psdots[linecolor=black, fillstyle=solid, dotstyle=o, dotsize=0.9, fillcolor=colour0](5.08,-1.1759791)
\rput(5.08,-1.1759791){$\Gamma_5$}
\rput(4.08,-0.17597915){${\rm min}\{d\}$}
\end{pspicture}
}
\end{center}
\caption{A graph-tree.\label{V nedeli jsem tahal na Kelimkovi.}}
\end{figure}

\begin{example}
A portrait of a graph-tree is
given in Figure~\ref{V nedeli jsem tahal na Kelimkovi.}. The set $V$ equals
in this case to $\{a,b,c,d,e,f\} $ with 
some (linear) order. The graph $\Gamma_4$ has
three vertices labelled by the elements of the subset
\[
\big\{\min\{a,b,c\}, \min\{d\}=d,\min\{e,f\}\big\} \subset \{a,b,c,d,e,f\} 
\]
with the induced linear order. The graph $\Gamma_5$ has only one
vertex labelled by $d$.
\end{example}

Let $e$ be an internal edge of a graph-tree $T$ pointing from
$\Gamma_u$ to  $\Gamma_v$. Then the tree $T/e$ obtained by contracting
the edge $e$ has an induced structure of a graph-tree given as
follows. The leaves and internal edges of $T/e$ bear the same labels
as they did in $T$. Also the vertices of $T/e$ except of the one, say
$x$, created
by the collapse of $e$, are labelled by the same graphs as in
$T$. Finally, the vertex $x$ is labelled by the graph $\Gamma_x$ given by the
vertex insertion of $\Gamma_u$ into the vertex of $\Gamma_v$ labelled
by $e$. Since, by Compatibility~2, the ordered set of 
legs of $\Gamma_u$ is the same as the
ordered set of the half-edges
adjacent to the vertex of $\Gamma_v$ labelled by $e$,
the vertex insertion is uniquely and well-defined. One clearly has
\[
\Vert(\Gamma_x) = (\Vert(\Gamma_v) \setminus \{\hbox{the vertex
  labelled by } e\}) \cup \Vert(\Gamma_u),
\]
where the union in the right hand side is disjoint thanks to
Compatibility~1. The set $\Vert(\Gamma_x)$ bears an order induced from
the inclusion $\Vert(\Gamma_x) \subset V$.
 
Repeating the collapsings described above we finally obtain a graph-tree
with one vertex (i.e.\ a rooted corolla) whose only vertex is labelled
by some graph $\GAmma \in \Grc$ with the ordered set of vertices~$V$. We denote the
graph $\GAmma$ thus obtained, which clearly does not depend on the order in
which we contracted the edges of $T$, by $\gr(T)$. 

\begin{theorem}
\label{Uz_nechci_letat_zakladni_vycvik.}
The set of connected components of the groupoid $\lTw(\GAmma)$ is
canonically isomorphic to the set $\gTr(\GAmma)$ of graph-trees  $T$ with
$\gr(T) = \GAmma$. 
\end{theorem}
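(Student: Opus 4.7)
The plan is to exhibit mutually inverse maps between the set of isomorphism classes of objects of $\lTw(\GAmma)$ and $\gTr(\GAmma)$, by showing that both sides parametrize the same combinatorial data: a nested factorization of $\GAmma$ into elementary pieces, together with the labellings making these pieces compatible.

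First, I would construct the forward map $\Phi$. Given a labelled tower $\GAmma = \GAmma_0 \stackrel{\tau_1}\longrightarrow \GAmma_1 \to \cdots \to \GAmma_n$, I invoke Lemma~\ref{Svedeni neustava.} to replace each pre-elementary (and hence elementary) morphism $\tau_i$ by its unique canonical contraction $\pi_i$, and I record the unique nontrivial fiber $\delta_i \in \Grc$ of each $\pi_i$. These fibers are assembled into a rooted tree $T := \Phi(\text{tower})$ whose root is decorated by $\delta_n$; each $\delta_i$ with $i<n$ sits inside a vertex of some later $\GAmma_j$, and becomes a child of the corresponding $\delta_j$ in $T$. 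Labels on leaves and internal edges are inherited from the ordered set of vertices of $\GAmma$ via the construction of canonical contractions (the map $\phi$ of \eqref{Dnes_s_Jarkou_a_Oliverem.} and the lexicographic reordering). Compatibility~1 is exactly the identification $\Vert(\delta_i)=\phi^{-1}(\min)$; Compatibility~2 follows from the involution $\sigma''$ being restricted from $\sigma'$ in \eqref{Treti den v Berkeley}. By construction, iteratively performing the vertex-insertions that define $\gr(T)$ reverses the sequence of canonical contractions, so $\gr(T)=\GAmma$.

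Second, I would construct the inverse $\Psi$. Given $T\in\gTr(\GAmma)$, choose any linear extension of the root-ward partial order on the vertices of $T$ and contract the corresponding internal edges one at a time; each step is a canonical contraction, and the resulting sequence is a labelled tower whose terminal object is the rooted corolla corresponding to the root-vertex, with first object $\gr(T)=\GAmma$.

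Third, I would verify well-definedness on connected components. For $\Phi$, any morphism in $\lTw(\GAmma)$ is a family of isomorphisms at each level compatible with the tower maps; uniqueness in Lemma~\ref{Svedeni neustava.} forces the associated canonical contractions, and therefore the extracted fibers $\delta_i$, to be transported by these isomorphisms without changing the underlying graph-tree. For $\Psi$, independence of the chosen linear extension reduces to the claim that transposing two consecutive contractions of disjoint subgraphs produces a tower in the same connected component; this follows from the diamond coming from the commutativity of two independent canonical contractions, which supplies the required isomorphism of labelled towers.

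The main obstacle will be this last independence statement: it requires unpacking the definition of morphisms in $\lTw(\GAmma)$ from \cite{SydneyII} and constructing the isomorphism at each intermediate level witnessing that two linear extensions differing by a single adjacent swap are connected. Once this is in hand, the identities $\Phi\circ\Psi=\mathrm{id}$ and $\Psi\circ\Phi=\mathrm{id}$ are straightforward bookkeeping, since both compositions are given by reading off, and then reinstating, the same sequence of canonical contractions encoded by $T$.
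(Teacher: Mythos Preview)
Your overall strategy matches the paper's: build mutually inverse maps using canonical contractions in one direction and reading off fibers in the other, then check well-definedness via the two types of isomorphisms in $\lTw(\GAmma)$. However, there is a genuine gap in your map $\Psi$.

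A labelled tower, as defined in \cite[Section~5]{SydneyII} and recalled in~(\ref{t1}), consists of an isomorphism $\ell:\GAmma\to\Delta_0$ followed by \emph{elementary} maps $\tau_i$, i.e.\ order-preserving maps with exactly one nontrivial fiber. Your $\Psi$ produces a sequence of canonical contractions, which are only \emph{pre-elementary}: they need not preserve the order on vertices (the paper gives an explicit example where $\tilde\tau_1$ and $\tilde\tau_2$ fail to be order-preserving). So the output of your $\Psi$ is not, as stated, an object of $\lTw(\GAmma)$. The paper fixes this by invoking factorizability of $\Grc$ \cite[Lemma~3.16]{SydneyI}: each pre-elementary map is rewritten as a quasibijection followed by an order-preserving map, and one pushes the quasibijections down the tower to produce genuine elementary maps and an initial isomorphism $\ell$ (the left diagram in Figure~\ref{Psano v Mercine.}). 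You need this step, and it is also what makes the well-definedness argument work: different factorization choices yield towers related by an isomorphism of the first type (the middle diagram in Figure~\ref{Psano v Mercine.}), while different linear extensions yield isomorphisms of the second type. Without the factorization step your diamond argument for independence of the linear extension is comparing the wrong objects.

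Two smaller points. First, your description of the tower omits the initial isomorphism $\ell$; in the reverse direction $\Phi$ the paper's first move is to absorb $\ell$ into $\tau_1$ before passing to canonical form. Second, the root of the graph-tree is decorated by the final graph $\Delta_{k-1}$ of the tower, not by a fiber: for a tower with $k-1$ elementary maps one obtains $k$ decorating graphs, the $k-1$ nontrivial fibers together with $\Delta_{k-1}$ itself.
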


\begin{figure}[h]
\begin{center}
\psscalebox{1.0 1.0} 
{
\begin{pspicture}(0,-4.0582557)(13.51,4.0582557)
\psline[linecolor=black, linewidth=0.04, arrowsize=0.2cm 2.0,arrowlength=1.7,arrowinset=0.0]{->}(4.4,1.9762975)(4.4,4.1512976)
\psline[linecolor=black, linewidth=0.04, arrowsize=0.05291667cm 2.0,arrowlength=1.4,arrowinset=0.0]{->}(7.3,-4.0487027)(8.0,-2.7612026)
\psline[linecolor=black, linewidth=0.04, arrowsize=0.05291667cm 2.0,arrowlength=1.4,arrowinset=0.0]{->}(8.7,-4.0487027)(8.0,-2.7612026)
\psline[linecolor=black, linewidth=0.04, arrowsize=0.05291667cm 2.0,arrowlength=1.4,arrowinset=0.0]{->}(5.9,-4.0487027)(4.4,1.9762975)
\psline[linecolor=black, linewidth=0.04, arrowsize=0.05291667cm 2.0,arrowlength=1.4,arrowinset=0.0]{->}(8.1,-2.8487024)(4.4,1.9762975)
\psline[linecolor=black, linewidth=0.04, arrowsize=0.05291667cm 2.0,arrowlength=1.4,arrowinset=0.0]{->}(1.7,-4.0487027)(2.7,0.7512975)
\psline[linecolor=black, linewidth=0.04, arrowsize=0.05291667cm 2.0,arrowlength=1.4,arrowinset=0.0]{->}(2.7,0.7512975)(4.4,1.9512974)
\psline[linecolor=black, linewidth=0.04](3.1,-4.0487027)(3.7,-1.6487025)
\psline[linecolor=black, linewidth=0.04](4.5,-4.0487027)(3.7,-1.6487025)
\psline[linecolor=black, linewidth=0.04](3.7,-1.6487025)(2.7,0.7512975)
\rput(3.7,-1.8487025){$\Gamma_1$}
\psline[linecolor=black, linewidth=0.02](9.1,-2.8487024)(1.3,-2.8487024)
\psline[linecolor=black, linewidth=0.02](1.3,-1.6487025)(9.1,-1.6487025)
\psline[linecolor=black, linewidth=0.02](1.3,-0.44870254)(9.1,-0.44870254)
\psline[linecolor=black, linewidth=0.02](1.3,0.7512975)(9.1,0.7512975)
\psline[linecolor=black, linewidth=0.02](1.3,1.9512974)(9.1,1.9512974)
\psdots[linecolor=black, fillstyle=solid, dotstyle=o, dotsize=0.9, fillcolor=white](8.1,-2.8487024)
\psdots[linecolor=black, fillstyle=solid, dotstyle=o, dotsize=0.9, fillcolor=white](3.6,-1.7487025)
\psdots[linecolor=black, fillstyle=solid, dotstyle=o, dotsize=0.9, fillcolor=white](4.5,1.9512974)
\psdots[linecolor=black, fillstyle=solid, dotstyle=o, dotsize=0.9, fillcolor=white](2.7,0.7512975)
\psdots[linecolor=black, fillstyle=solid, dotstyle=o, dotsize=0.9, fillcolor=white](5.0,-0.34870255)
\rput[l](4.3,1.95){$\Gamma_4$}
\rput[l](2.5,0.7512975){$\Gamma_2$}
\rput[l](4.8,-0.355){$\Gamma_5$}
\rput[l](3.4,-1.7487025){$\Gamma_1$}
\rput[l](7.9,-2.8487024){$\Gamma_3$}
\psline[linecolor=black, linewidth=0.02, linestyle=dashed, dash=0.17638889cm 0.10583334cm, arrowsize=0.05291667cm 2.0,arrowlength=1.4,arrowinset=0.0]{->}(8.5,1.3512975)(10.3,1.3512975)(10.3,2.9512975)
\psline[linecolor=black, linewidth=0.02, linestyle=dashed, dash=0.17638889cm 0.10583334cm, arrowsize=0.05291667cm 2.0,arrowlength=1.4,arrowinset=0.0]{->}(9.1,0.15129745)(11.1,0.15129745)(11.1,1.7512975)
\psline[linecolor=black, linewidth=0.02, linestyle=dashed, dash=0.17638889cm 0.10583334cm, arrowsize=0.05291667cm 2.0,arrowlength=1.4,arrowinset=0.0]{->}(9.1,-1.0487026)(11.9,-1.0487026)(11.9,0.35129747)
\psline[linecolor=black, linewidth=0.02, linestyle=dashed, dash=0.17638889cm 0.10583334cm, arrowsize=0.05291667cm 2.0,arrowlength=1.4,arrowinset=0.0]{->}(9.1,-2.2487025)(12.7,-2.2487025)(12.7,-0.84870255)
\psline[linecolor=black, linewidth=0.02, linestyle=dashed, dash=0.17638889cm 0.10583334cm, arrowsize=0.05291667cm 2.0,arrowlength=1.4,arrowinset=0.0]{->}(9.1,-3.4487026)(13.5,-3.4487026)(13.5,-1.8487025)
\rput[b](0.5,-2.8487024){{\rm level 1}}
\rput(0.5,-1.5487026){{\rm level 2}}
\rput[b](0.5,-0.44870254){{\rm level 3}}
\rput[b](0.5,0.7512975){{\rm level 4}}
\rput[b](0.5,1.9512974){{\rm level 5}}
\rput[b](13.1,-3.2487025){$\Delta_0$}
\rput[b](12.3,-2.0487025){$\Delta_1$}
\rput[b](11.5,-0.84870255){$\Delta_2$}
\rput[b](10.7,0.35129747){$\Delta_3$}
\rput[b](9.9,1.5512974){$\Delta_4$}
\psline[linecolor=black, linewidth=0.02, arrowsize=0.05291667cm 2.0,arrowlength=1.4,arrowinset=0.0]{->}(9.5,0.35129747)(9.5,1.1512975)
\psline[linecolor=black, linewidth=0.02, arrowsize=0.05291667cm 2.0,arrowlength=1.4,arrowinset=0.0]{->}(9.5,-0.84870255)(9.5,-0.048702545)
\psline[linecolor=black, linewidth=0.02, arrowsize=0.05291667cm 2.0,arrowlength=1.4,arrowinset=0.0]{->}(9.5,-2.0487025)(9.5,-1.2487025)
\psline[linecolor=black, linewidth=0.02, arrowsize=0.05291667cm 2.0,arrowlength=1.4,arrowinset=0.0]{->}(9.5,-3.2487025)(9.5,-2.4487026)
\rput(9.9,-2.8487024){$\tau_1$}
\rput(9.9,-1.6487025){$\tau_2$}
\rput(9.9,-0.44870254){$\tau_3$}
\rput(9.9,0.7512975){$\tau_4$}
\end{pspicture}
}
\end{center}
\caption{Introducing levels to the tree in Figure~\ref{V nedeli jsem
    tahal na Kelimkovi.}. The labels of its leaves and edges are the same
  as in Figure \ref{V nedeli jsem tahal na Kelimkovi.} \label{Zase_pojedu_proti_vetru!}}
\end{figure}
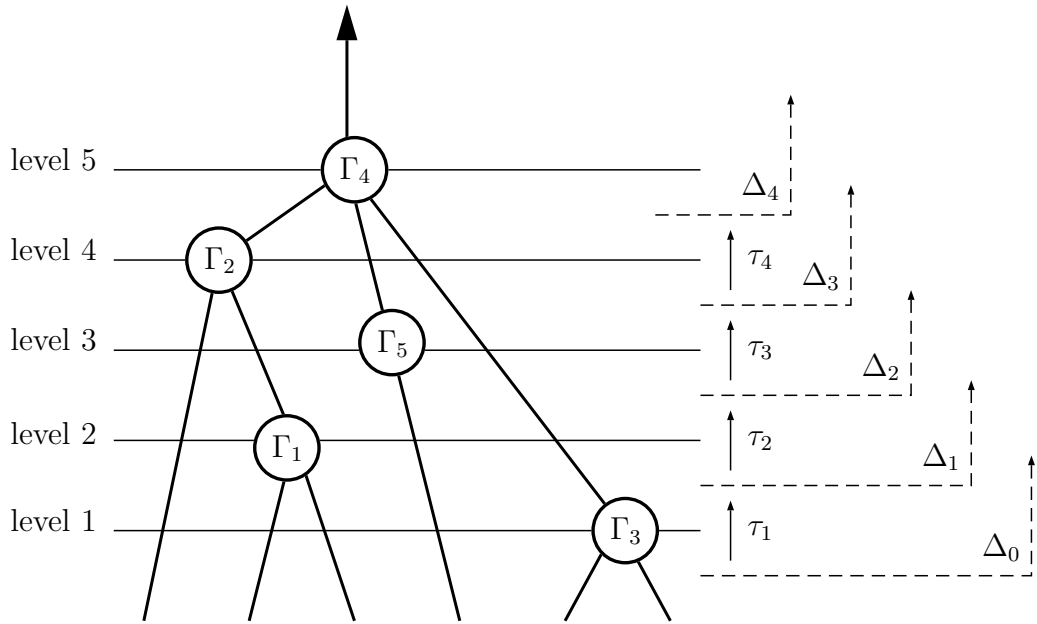

\def\ttau{{\tilde \tau}}
\def\tDelta{{\widetilde \Delta}}

\begin{proof}
Recall from \cite[Section~3]{SydneyII} that the objects of $\lTw(\GAmma)$
are labelled towers
\begin{equation}
\label{t1}
\bfT = (\bfT,\ell): \
\GAmma \stackrel \ell\longrightarrow 
\Delta_0 \stackrel {\tau_1} \longrightarrow \Delta_1  \stackrel  {\tau_2} \longrightarrow
\Delta_2\stackrel  {\tau_3} \longrightarrow 
\cdots  \stackrel  {\tau_{k-1}}   \longrightarrow  \Delta_{k-1},
\end{equation}
where $\rada {\Delta_0}{\Delta_{k-1}}$ are graphs in $\Grc$, $\ell$ an
isomorphism, and $\Rada \tau1{k-1}$ elementary maps. We will
construct a map
\[
A : \gTr(\GAmma) \longrightarrow \pi_0\big( \lTw(\GAmma)\big)
\] 
of sets as follows. Assume that $T \in \gTr(\GAmma)$ is a graph-tree with $k$
vertices. We distribute the vertices of $T$ to levels 
such that each level contains
precisely one vertex, see Figure~\ref{Zase_pojedu_proti_vetru!} for an
example. Let $T_{i-1}$, $1 \leq i \leq k$, be the graph-tree obtained
from $T$ by truncating everything above level $i$, level $i$ included,
see Figure~\ref{Zase_pojedu_proti_vetru!} again. Denote $\tDelta_{i-1} :=
\gr(T_{i-1})$, $1 \leq i \leq k$. Notice that $\tDelta_0 = \GAmma$ by definition. 
One thus obtains a sequence of pre-elementary maps
\begin{equation}
\label{Jaruska_ma_jizdy.}
\GAmma \stackrel \id\longrightarrow 
\tDelta_0 \stackrel {\ttau_1} \longrightarrow \tDelta_1  \stackrel  {\ttau_2} \longrightarrow
\tDelta_2\stackrel  {\ttau_3} \longrightarrow 
\cdots  \stackrel  {\ttau_{k-1}}   \longrightarrow  \tDelta_{k-1}
\end{equation}
in which the map $\ttau_i : \tDelta_{i-1} \to \tDelta_i$, $1 \leq i \leq
k-1$, is defined as follows. Let $u$ be the only
vertex on the $i$th level and $e$ its out-going edge. Then $\ttau_i$ is
the map that contracts the subgraph $\Gamma_u$ of $\tDelta_{i-1}$ into
the vertex of $e$ labelled by $e$. In other words, $\ttau_i$ is the
canonical contraction $\tDelta_{i-1} \to \tDelta_i =
\tDelta_{i-1}/\Gamma_u$ and thus a pre-elementary map.

For instance,
in the situation of Figure~\ref{Zase_pojedu_proti_vetru!}, the graph
$\tDelta_0$ has vertices $\{a,b,c,d,e,f\}$ and
\[
\Vert(\tDelta_1) = \big\{a,b,c,d,\min\{e,f\}\big\}.
\]
The map $\ttau_1$ contacts the subgraph $\Gamma_3$ of $\tDelta_0$ into
the vertex $\min\{e,f\}$ of $\tDelta_1$. Likewise,
\[
\Vert(\tDelta_2) = \big\{a,\min\{b,c\},d,\min\{e,f\}\big\}
\]
and $\ttau_2$ contacts the subgraph $\Gamma_1$ into the vertex
$\min\{b,c\}$ of $\tDelta_2$. Notice that $\ttau_1$, resp.~$\ttau_2$
is order-preserving if and only if $\{e,f\}$ resp.~$\{b,c\}$ is an
interval. This shows that $\ttau_i$'s in~(\ref{Jaruska_ma_jizdy.})
need not in general be elementary, i.e.\ preserving the orders of the
set of vertices.
 
Out next task will be to modify the tower~(\ref{Jaruska_ma_jizdy.})
into a tower as in~(\ref{t1}) with all $\tau_i$'s elementary. 
To do so we use the fact that the
category $\Grc$ is factorizable~\cite[Lemma 3.16]{SydneyI}, meaning that each morphism
can be written as $\phi \sigma$, where $\phi$ is order-preserving and
$\sigma$ a quasibijection. It also follows from~\cite[Lemma~2.1]{SydneyI} resp~\cite[Lemma 2.2]{SydneyI} that, if $\sigma$ is a
quasibijection and $\psi$ a pre-elementary map, then both $\sigma\psi$
and $\psi\sigma$ are pre-elementary as well. Recall also that in
$\Grc$ all quasibijections are invertible and their inverses are
quasibijections again.

The process of modification is described in Figure~\ref{Psano v
  Mercine.}. We start at the bottom, by putting $\Delta_{k-1} :=
\tDelta_{k-1}$ and decomposing $\ttau_{k-1}$ into a quasibijection
$\sigma_{k-2}$ followed by an order-preserving $\tau_{k-1}$. By the
above remarks, $\tau_{k-1} = \ttau_{k-1} \sigma_{k-1}^{-1}$ is
pre-elementary and, since it is order-preserving, it is elementary. Now decompose $\sigma_{k-2}
\ttau_{k-2}$ as a quasibijection $\sigma_{k-3}$ followed by an order
preserving $\tau_{k-2}$. By the same reasoning, $\tau_{k-2} =
\sigma_{k-2} \ttau_{k-2} {\sigma'}^{-1}_{\hskip -.3em k-3}$ is
elementary. We go all the way up, ending with $\ell := \sigma_0$.
\begin{figure}[h]
\centering
\[
\xymatrix{
\Gamma\ar[d]^{\id}\ar[rd]^\ell
\\
\tDelta_0 \ar[d]^{\ttau_1}\ar[r]^{\sigma_0}_\sim &\Delta_0 \ar[d]^{\tau_1}
\\
\tDelta_1\ar[d]^{\ttau_2}\ar[r]^{\sigma_1}_\sim & \Delta_1\ar[d]^{\tau_2}
\\
\vdots\ar[d]^{\ttau_{k-3}} & \vdots\ar[d]^{\tau_{k-3}}
\\
\tDelta_{k-3}\ar[d]^{\ttau_{k-2}}\ar[r]^{\sigma_{k-3}}_\sim & \Delta_{k-3}\ar[d]^{\tau_{k-2}}
\\
\tDelta_{k-2}\ar[d]^{\ttau_{k-1}}\ar[r]^{\sigma_{k-2}}_\sim & \Delta_{k-2}\ar[d]^{\tau_{k-1}}
\\
\tDelta_{k-1}\ar@{=}[r]^{\id} & \Delta_{k-1}
}
\hskip 3em
\xymatrix@R=1.95em{
&\Gamma\ar[d]^{\id}\ar[rd]^{\ell''} \ar[ld]_{\ell'}&
\\
\Delta'_0 \ar[d]^{\tau'_1}&\tDelta_0 
\ar[d]^{\ttau_1}
\ar[r]^{\sigma''_0}_\sim \ar[l]_{\sigma'_0}^\sim &\Delta''_0 \ar[d]^{\tau''_1}
\\
\Delta'_1\ar[d]^{\tau'_2}&\tDelta_1\ar[d]^{\ttau_2}\ar[r]^{\sigma''_1}_\sim
\ar[l]_{\sigma'_1}^\sim
& 
\Delta''_1\ar[d]^{\tau''_2}
\\
\vdots\ar[d]^{\tau'_{k-3}}&\vdots\ar[d]^{\ttau_{k-3}} & \vdots\ar[d]^{\tau''_{k-3}}
\\
 \Delta'_{k-3}\ar[d]^{\tau'_{k-2}}&\tDelta_{k-3}\ar[d]^{\ttau_{k-2}}\ar[r]^{\sigma''_{k-3}}_\sim 
\ar[l]_{\sigma'_{k-3}}^\sim & \Delta''_{k-3}\ar[d]^{\tau''_{k-2}}
\\
\Delta'_{k-2}\ar[d]^{\tau'_{k-1}}&\tDelta_{k-2}\ar[d]^{\ttau_{k-1}}\ar[r]^{\sigma''_{k-2}}_\sim
\ar[l]_{\sigma'_{k-2}}^\sim & \Delta''_{k-2}\ar[d]^{\tau''_{k-1}}
\\
\Delta'_{k-1}&\tDelta_{k-1}\ar@{=}[r]^{\id} \ar@{=}[l]_{\id} & \Delta''_{k-1}
}
\hskip 3em
\xymatrix{
\Gamma\ar[d]^{\id}
\\
\tDelta_0 \ar[d]^{\widehat\tau_1} \ar[rd]^{\ttau_1}&
\\
\Delta_1\ar[d]^{\tau_2}\ar[r]^{\sigma_1}_\sim & \tDelta_1\ar[d]^{\ttau_2}
\\
\Delta_{2}\ar[d]^{\tau_{3}}\ar[r]^{\sigma_{2}}_\sim & \tDelta_{2}\ar[d]^{\ttau_{3}}
\\
\vdots\ar[d]^{\tau_{k-3}} & \vdots\ar[d]^{\ttau_{k-3}}
\\
\Delta_{k-2}\ar[d]^{\tau_{k-1}}\ar[r]^{\sigma_{k-2}}_\sim & \tDelta_{k-2}\ar[d]^{\ttau_{k-1}}
\\
\Delta_{k-1}\ar[r]^{\sigma_{k-1}} & \tDelta_{k-1}
}
\]  
\caption{\label{Psano v Mercine.}%
Replacing~(\ref{Jaruska_ma_jizdy.}) by a labelled
  tower of elementary maps (left); independence of the choices
  of factorizations (center); replacing~\eqref{t1} by a tower of canonical
  contractions (right).}
\end{figure}
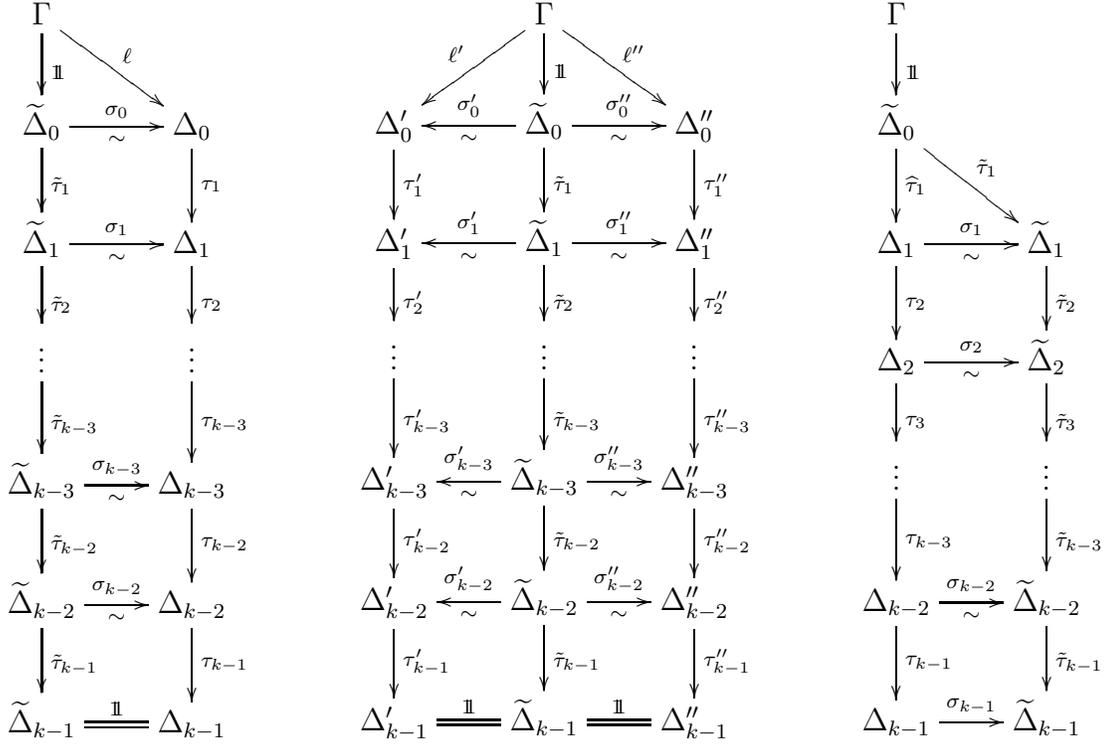

The actual value of $a(T)$ might depend on the choice of
factorizations but, as the diagram at the center  of Figure \ref{Psano v
  Mercine.} shows, the results are related by an isomorphism of
the 1st type in the sense of \cite[Section~3]{SydneyII} whose
definition we recall in Figure~\ref{f1}. Indeed, take $\sigma_{k-1}
:= \id$ and $\sigma_i := \sigma_i'' {\sigma'}_{\hskip -.2em i}^{-1}$
for $0 \leq i \leq 
k-2$ in that figure. 
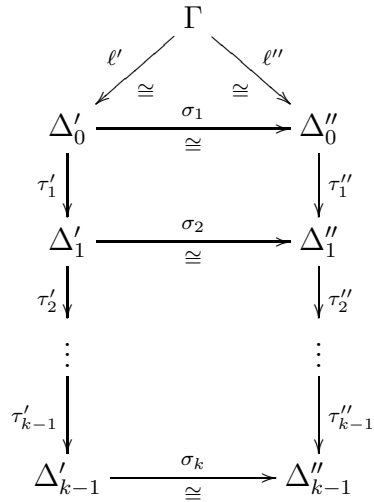
\begin{figure}[h]
\[
\xymatrix{
&\Gamma  \ar[ld]_{\ell'}^\cong \ar[rd]^{\ell''}_\cong  &
\\
\Delta_0' \ar[d]_{\tau'_1} \ar[rr]^{\sigma_1}_\cong & & \Delta_0'' \ar[d]^{\tau''_1}
\\
\Delta_1'  \ar[d]_{\tau'_2} \ar[rr]^{\sigma_2}_\cong & & \Delta_1'' \ar[d]^{\tau''_2}
\\
\vdots \ar[d]_{\tau'_{k-1}}  && \vdots \ar[d]^{\tau''_{k-1}}
\\
\Delta_{k-1}' \ar[rr]^{\sigma_{k}}_\cong && \Delta_{k-1}''
}
\]
\caption{A commutative diagram defining a morphism of labelled towers of the
  first type.
The maps $\Rada \sigma1k$ are isomorphisms.
\label{f1}}
\end{figure}
The  value of $a(T)$ might also depend on the choice of levels of
$T$, but any two such values are related by an isomorphism of
the 2nd type in the sense of \cite[Section~3]{SydneyII}. 
The connected component of $a(T)$ therefore does not depend on the
choices, 
so we may define $A(T) := \pi_0( a(T)) \in \pi_0\big( \lTw(\GAmma)\big)$.

Let us proceed to the construction of the inverse  
$
B : \pi_0\big( \lTw(\GAmma)\big)  \longrightarrow \gTr(\GAmma)
$ 
of $A$.
Suppose that we are given a labelled tower $\bfT$ as in~(\ref{t1}). Our strategy
will be to modify it
into the form where $\ell$ is the identity and 
the remaining maps are canonical contractions. 
We start by absorbing $\ell$ into 
$\tau_1$ in~(\ref{t1}) by replacing it with
\begin{equation}
\label{Projedu_se_jeste_na_kole?}
\GAmma \stackrel \id\longrightarrow 
\tDelta_0 \stackrel {\widehat\tau_1} \longrightarrow \Delta_1  \stackrel  {\tau_2} \longrightarrow
\Delta_2\stackrel  {\tau_3} \longrightarrow 
\cdots  \stackrel  {\tau_{k-1}}   \longrightarrow  \Delta_{k-1},
\end{equation}
where $\tDelta_0 := \Gamma$ and $\widehat\tau_1 := \tau_1 \circ \ell$. Notice that all morphisms
in~(\ref{Projedu_se_jeste_na_kole?}) satisfy the hypothesis of
Lemma~\ref{Svedeni neustava.}. The next steps are illustrated by the
diagram on the right of Figure~\ref{Jaruska_ma_jizdy.}. In that
diagram, $\ttau_1$ is the canonical projection obtained by taking, in
Lemma~\ref{Svedeni neustava.}, $\Delta' = \tDelta_0$, $\Delta'' =
\Delta_1$ and $\tau = \hat\tau_1$. Now, the composition
$\tau_2 \sigma_1^{-1}$ satisfies the assumptions of
Lemma~\ref{Svedeni neustava.}, and $\ttau_2$ is the canonical
projection obtained from that lemma by taking $\Delta' = \tDelta_1$,
$\Delta'' = \Delta_2$ and 
$\tau = \tau_2 \sigma_1^{-1}$. We then continue all the way down till we
eventually construct the canonical projection $\ttau_{k-1}$.

We thus modified the labelled tower $\bfT$ in~(\ref{t1}) to the 
tower~\eqref{Jaruska_ma_jizdy.} in which all
$\ttau_i$'s are canonical contractions. We will say that such a  tower 
has the {\em canonical form\/}.
Denote by $V_i$ the set of vertices of
$\tDelta_i$ in~\eqref{Jaruska_ma_jizdy.}, 
$0 \leq i \leq k\!-\!1$. It follows from the definition of
canonical contractions that $V_0 \supset V_1 \supset \cdots \supset V_{k-1}$. 
Moreover, each $V_i$ contains a distinguished element $x_i$ 
over which the unique nontrivial fiber of $\tau_i$ lives. We
extend the notation by putting $V_k := \{*\}$, the one-point set,
and $x_k := *$.  
The vertex parts of $\tau_i$'s give rise to the sequence
\begin{equation}
\label{Je_cas_chystat_se_do_Australie.}
\xymatrix@1{
V_0 \ \ar@{->>}[r]^{\phi_1} &
\ V_1 \ \ar@{->>}[r]^{\phi_2} 
&\ V_2\ \ar@{->>}[r]^{\phi_3} &\ \cdots \ \ar@{->>}[r]^{\phi_{k-1}} 
&\ V_{k-1}\ \ar@{->>}[r]^{\phi_k}
 &\ V_k = \{*\} 
}
\end{equation} 
of epimorphisms
with the property that $\min(\phi_1^{-1} \circ \cdots \circ
\phi_i^{-1}(x_i)) = x_i$, $1 \leq i \leq k\-1$. Such a~sequence of epimorphisms
of finite ordered sets determines in the standard manner a 
rooted tree with levels, with its leaves
labelled by $V_0$, with the root $*$ and the remaining vertices
$\Rada x1{k-1}$. Forgetting the levels, decorating the root by
$\tDelta_0 = \Gamma$ and $x_i$ by the fiber $\Gamma_i$ of $\tau_i$, $1\leq i \leq
k\-1$, leads to a graph-tree $B(\bfT) \in \gTr(\GAmma)$.   

The reason why $B(\bfT)$ is well-defined, that is,
$B(\bfT') = B(\bfT'')$ if $\bfT'$ and $\bfT''$ are isomorphic
labelled towers, is that
for isomorphisms of the second type, 
see \cite[Section~3]{SydneyII} for terminology, the difference
disappears after forgetting the levels of the tree corresponding
to~(\ref{Je_cas_chystat_se_do_Australie.}), while it is not difficult
to see that the canonical forms of labelled towers related by an
isomorphisms of the first type are {\em the same\/}.

It is clear that $\hbox{$(B \!\circ\! A)$}(T) = T$ for $T\in
\gTr(\GAmma)$. Given a labelled tower $\bfT \in \lTw(\GAmma)$, the concrete form
of the tower $a(
B(\bfT)) \in \lTw(\GAmma)$ representing $\hbox{$(A\! \circ\!
B)$}(\bfT) \in \pi_0(\lTw(\GAmma))$ depends on the choice of levels for the tree
$B(\bfT)$. But any two such towers are related by a type two
isomorphism.   
Since modifying a tower into its canonical form does not change its
isomorphism class, we established that $B$ is also a left inverse of $A$.
\end{proof}

The set $\gTr(\GAmma)$ and therefore also the set
$\pi_0(\lTw(\GAmma))$ of connected components of the category
$\lTw(\GAmma)$ has a natural poset structure induced by the relation
$T \prec T/e$ for a graph-tree $T\in \gTr(\GAmma)$ and its edge
$e$. Its categorical origin is the following.

Let us denote, only for the purpose of this explanation, by $\ttC$ the 
category whose objects are the same as the objects of  $\lTw(\GAmma)$,
i.e.\ the labelled towers $\bfT$ as
in~(\ref{t1}). We postulate that there is a unique morphism $\bfT \to
\bfS$,  $\bfT \not=
\bfS$, in $\ttC$
if and only if $\bfS$ is obtained from  $\bfT$ by composing two or more
adjacent morphisms $\tau_i$'s that
have mutually joint fibers, in the sense
of~\cite[Definition~5.4]{SydneyI}. The only other morphisms in $\ttC$
are the identities.

We denote by $\lTw(\GAmma) \int \ttC$ the category with the objects of
$\lTw(\GAmma)$ whose 
morphisms are formal compositions of a morphism of $\lTw(\GAmma)$ with a
morphism of $\ttC$. The poset \hbox{$(\pi_0(\lTw(\GAmma)),\prec)$}
considered in the standard manner as a category is then
canonically isomorphic to the pushout in {\tt Cat} of the diagram
\[
\xymatrix{\lTw(\GAmma) \ar[d]\ar@{^(->}[r] & \lTw(\GAmma) \int \ttC
\\
\pi_0(\lTw(\GAmma))
}
\]
in which $\pi_0(\lTw(\GAmma))$ is taken as a discrete category.

.

We are finally going to give an explicit formula for 
the free $\Grc$-operad $\Free(E)$ generated by a
$1$-connected collection $E$ evaluated at a graph $\GAmma$. Recall that $E$
is a representation, in the category of graded vector spaces, 
of the groupoid $\QV(e)$ whose objects are graphs
in $\Grc$ and morphisms are virtual isomorphisms which are, in this
specific case, isomorphisms of graphs which need not respect the
orders of the legs. The $1$-connectivity means that $E(\GAmma) \not= 0$
implies that $\GAmma \in \Grc$ has at least one internal edge. 

\begin{Warning}
\label{Dnes jdu s Jaruskou na koncert.}
Let us consider the classical free non-$\Sigma$ operad $\Free(E) =
\{\Free(E)(n)\}_{n \geq 1}$ generated by
a collection $E$ of graded vector spaces. A common mistake is to 
assume that the elements of $\Free(E)$ are (represented by) trees with
vertices decorated by elements of $E$. This is true only when $E$ is
concentrated in even degrees. Otherwise we need one more piece of
information, namely a choice of levels of the underlying tree.

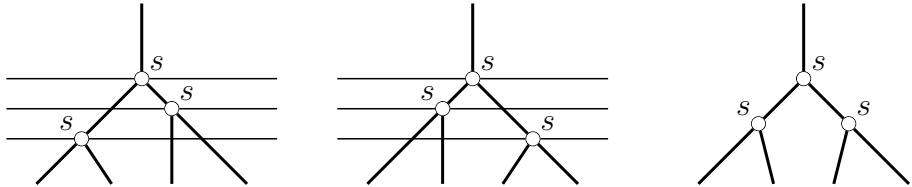
\begin{figure}[h]
\begin{center}
\psscalebox{1.0 1.0} 
{
\begin{pspicture}(0,-1.2070711)(12.014142,1.2070711)
\psline[linecolor=black, linewidth=0.04](10.6,1.2070711)(10.6,0.20707108)(9.2,-1.1929289)
\psline[linecolor=black, linewidth=0.04](10.6,0.20707108)(12.0,-1.1929289)
\psline[linecolor=black, linewidth=0.04](6.2,1.2070711)(6.2,0.20707108)(4.8,-1.1929289)
\psline[linecolor=black, linewidth=0.04](6.2,0.20707108)(7.6,-1.1929289)
\psline[linecolor=black, linewidth=0.04](1.8,1.2070711)(1.8,0.20707108)(0.4,-1.1929289)
\psline[linecolor=black, linewidth=0.04](1.8,0.20707108)(3.2,-1.1929289)
\psline[linecolor=black, linewidth=0.04](5.8,-0.19292893)(5.8,-1.1929289)
\psline[linecolor=black, linewidth=0.04](7.0,-0.59292895)(6.6,-1.1929289)
\psline[linecolor=black, linewidth=0.04](2.2,-0.19292893)(2.2,-1.1929289)
\psline[linecolor=black, linewidth=0.04](1.0,-0.59292895)(1.4,-1.1929289)
\psline[linecolor=black, linewidth=0.04](10.0,-0.39292893)(10.2,-1.1929289)
\psline[linecolor=black, linewidth=0.04](11.2,-0.39292893)(11.0,-1.1929289)
\psline[linecolor=black, linewidth=0.02](0.0,-0.19292893)(3.6,-0.19292893)
\psline[linecolor=black, linewidth=0.02](0.0,-0.59292895)(3.6,-0.59292895)
\psline[linecolor=black, linewidth=0.02](0.0,0.20707108)(3.6,0.20707108)
\psline[linecolor=black, linewidth=0.02](4.4,0.20707108)(8.0,0.20707108)
\psline[linecolor=black, linewidth=0.02](4.4,-0.19292893)(8.0,-0.19292893)
\psline[linecolor=black, linewidth=0.02](4.4,-0.59292895)(8.0,-0.59292895)
\rput(2.0,0.40707108){$s$}
\rput(6.4,0.40707108){$s$}
\rput(10.8,0.40707108){$s$}
\rput(0.8,-0.39292893){$s$}
\rput(2.4,0.0070710755){$s$}
\rput(5.6,0.0070710755){$s$}
\rput(7.2,-0.39292893){$s$}
\rput(9.8,-0.19292893){$s$}
\rput(11.4,-0.19292893){$s$}
\psdots[linecolor=black, dotstyle=o, dotsize=0.2, fillcolor=white](1.8,0.20707108)
\psdots[linecolor=black, dotstyle=o, dotsize=0.2, fillcolor=white](1.0,-0.59292895)
\psdots[linecolor=black, dotstyle=o, dotsize=0.2, fillcolor=white](2.2,-0.19292893)
\psdots[linecolor=black, dotstyle=o, dotsize=0.2, fillcolor=white](5.8,-0.19292893)
\psdots[linecolor=black, dotstyle=o, dotsize=0.2, fillcolor=white](6.2,0.20707108)
\psdots[linecolor=black, dotstyle=o, dotsize=0.2, fillcolor=white](7.0,-0.59292895)
\psdots[linecolor=black, dotstyle=o, dotsize=0.2, fillcolor=white](10.6,0.20707108)
\psdots[linecolor=black, dotstyle=o, dotsize=0.2, fillcolor=white](10.0,-0.39292893)
\psdots[linecolor=black, dotstyle=o, dotsize=0.2, fillcolor=white](11.2,-0.39292893)
\end{pspicture}
}
\end{center}
\caption{Levels versus no levels.\label{Dnes_prijede_Andulka.}}
\end{figure}

Assume for instance that $s \in E(2)$ is a degree $1$ generator. 
The leftmost tree in Figure~\ref{Dnes_prijede_Andulka.} represents $(s \circ_2 s)
\circ_1 s \in \Free(E)(4)$ while the middle one $(s \circ_1 s)
\circ_3 s$ in the same piece of $\Free(E)$. 
By the parallel associativity of the
$\circ_i$-operations 
\[
(s \circ_2 s)
\circ_1 s = - (s \circ_1 s)
\circ_3 s,
\]
thus the two decorated trees represent {\em different\/} elements. 
If we do not specify the levels  in
the rightmost tree in Figure~\ref{Dnes_prijede_Andulka.}, we do not
know to which one we refer to. 
The same caution is necessary also in case of
free $\Grc$-operads. 
\end{Warning}

Let us return to our description of the free operad $\Free(E)$. 
For a graph-tree $T$ we denote by $\Lev(T)$ the chaotic groupoid whose
objects are all possible arrangements of
levels of $T$. For a given $\lambda \in \Lev(T)$, let
$\Gamma_i$, $1 \leq i \leq k\-1$, be the fiber of $\tau_i$ in the
tower~(\ref{Jaruska_ma_jizdy.}) associated to $T$ with levels
$\lambda$. Here `chaotic' means that the category $\Lev(T)$ has a unique
morphism  $\lambda' \to \lambda''$ for any $\lambda',\lambda'' \in
\Lev(T)$; this morphism is necessarily  an isomorphism.
We extend the notation by
$\Gamma_k := \Delta_k$. For a 1-connected collection we define
\begin{equation}
\label{Krtek_na_mne_kouka.}
E(T,\lambda) := E(\Gamma_1) \ot \cdots \ot E(\Gamma_k).
\end{equation}
For different $\lambda$'s this expression differs only by the order of
the factors, so we may, using the commutativity constraint for graded
vector spaces, promote formula~(\ref{Krtek_na_mne_kouka.}) into a functor
\begin{equation}
\label{Zviratka_mi_pomahaji.}
E: \Lev(T) \longrightarrow \Vect
\end{equation} 
into the category of graded vector spaces.

\begin{theorem}
\label{podlehl_jsem}
Given a $1$-connected collection $E$, one has the following
description of the arity~$\GAmma$ piece of the free
operad $\Free(E)$:
\begin{equation}
\label{Zbijecky duni.}
\Free(E)(\GAmma) \cong 
\begin{cases}
\displaystyle \bigoplus_{T \in \gTr(\GAmma)} \  \colim{\lambda \in
  \Lev(T)} E(T,\lambda)&\hbox {if $\GAmma$ has at least one internal edge,
  and}
\\
\bfk&\hbox {if $\GAmma$ has no internal edges.}
\end{cases}
\end{equation}
\end{theorem}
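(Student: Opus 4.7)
The plan is to derive the formula from the general colimit description of free operads in operadic categories from \cite{SydneyII}. That recipe presents $\Free(E)(\GAmma)$ as the colimit, over the groupoid $\lTw(\GAmma)$ of labelled towers, of the functor which sends a tower as in~(\ref{t1}) to the tensor product $E(\Gamma_1) \otimes \cdots \otimes E(\Gamma_k)$ of $E$ evaluated on the fibers of its elementary maps. The task is thus to reorganize this colimit along the partition of $\lTw(\GAmma)$ into connected components and to reduce each component-wise colimit to one over choices of levels.

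First I would observe that colimits commute with coproducts of groupoids. Since by Theorem~\ref{Uz_nechci_letat_zakladni_vycvik.} the set $\pi_0(\lTw(\GAmma))$ is canonically in bijection with $\gTr(\GAmma)$, splitting $\lTw(\GAmma)$ into its connected components $\lTw(\GAmma)_T$, one~$T$, gives
\[
\Free(E)(\GAmma) \cong \bigoplus_{T \in \gTr(\GAmma)} \colim_{\lTw(\GAmma)_T} E.
\]
Next I would identify each inner colimit with $\colim_{\lambda \in \Lev(T)} E(T,\lambda)$. The content of the proof of Theorem~\ref{Uz_nechci_letat_zakladni_vycvik.} is that every object of $\lTw(\GAmma)_T$ can, up to an isomorphism of the first type, be put into the canonical form~\eqref{Jaruska_ma_jizdy.} associated to $T$ endowed with some choice of levels $\lambda \in \Lev(T)$, and that two such canonical-form towers are isomorphic (of the second type) precisely when they come from the same $T$ but possibly different level arrangements. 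Isomorphisms of the first type act trivially on the fibers up to the reorderings encoded by the accompanying quasibijections, so on the level of the functor $E$ they induce canonical isomorphisms of the tensor products $E(\Gamma_1) \otimes \cdots \otimes E(\Gamma_k)$. Consequently the colimit over $\lTw(\GAmma)_T$ is computed already on the subcategory of canonical-form towers, which is the chaotic groupoid $\Lev(T)$, and the functor becomes exactly~\eqref{Krtek_na_mne_kouka.}.

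The delicate point, and the part that the Warning~\ref{Dnes jdu s Jaruskou na koncert.} is flagging in advance, is that different orderings of the tensor factors produced by different level arrangements are identified only up to the Koszul signs dictated by the commutativity constraint of $\Vect$. This is precisely why one must package $E(T,\lambda)$ into a functor $\Lev(T) \to \Vect$ as in~\eqref{Zviratka_mi_pomahaji.} and take a colimit, rather than simply picking a representative tensor product: the colimit records the coinvariants of this signed action. Verifying coherence of these signs under a zigzag that mixes type~1 and type~2 isomorphisms is the main bookkeeping obstacle; it follows by tracking how a quasibijection factoring an elementary map permutes the fiber labellings and hence the tensor factors, and invoking the fact that in $\Grc$ quasibijections are isomorphisms (\cite[Lemma~3.15]{SydneyI}).

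Finally, the corolla case is immediate: if $\GAmma$ has no internal edges, then $\lTw(\GAmma)$ consists of the single trivial tower $\GAmma \stackrel{\id}{\to} \GAmma$ (no elementary map admits a corolla as source since elementary fibers carry at least one internal edge), $\gTr(\GAmma)$ reduces to the one-vertex graph-tree, the empty tensor product gives $\bbk$, and $1$-connectivity of $E$ ensures there is no further contribution. This yields the second clause of~\eqref{Zbijecky duni.} and completes the verification.
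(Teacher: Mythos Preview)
Your approach is the same as the paper's: the paper simply says ``apply the formulas of \cite[Section~5]{SydneyII} to the particular case of $\Grc$,'' and you are unpacking exactly that, using Theorem~\ref{Uz_nechci_letat_zakladni_vycvik.} to split the colimit over $\lTw(\GAmma)$ into its connected components indexed by $\gTr(\GAmma)$ and then identifying each component with the chaotic groupoid $\Lev(T)$. The main argument is correct.

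There is one small slip in your treatment of the corolla case. You write that ``$\gTr(\GAmma)$ reduces to the one-vertex graph-tree,'' but by the convention stated just before the theorem, every vertex of a graph-tree is decorated by a graph with at least one internal edge, so $\gr(T)$ always has at least one internal edge. Hence when $\GAmma$ is a corolla one has $\gTr(\GAmma)=\emptyset$, as the paper notes explicitly. The value $\bbk$ in the second clause of~\eqref{Zbijecky duni.} does not come from an empty tensor product over a trivial tower; it comes from the strict unitality of the free operad on local terminal objects (corollas), which is built into the construction in \cite[Section~5]{SydneyII} and recalled right after the theorem. So your justification ``the empty tensor product gives $\bbk$'' is pointing at the wrong mechanism, though the conclusion is of course correct.
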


\begin{proof}
The statement is proved by applying the formulas of
\cite[Section~3]{SydneyII} to the particular case of $\Grc$. Notice
that $\GAmma$ has no internal edges if and only if $\gTr(\GAmma) = \emptyset$. 
\end{proof}

The reader may wonder how the formula in~(\ref{Zbijecky duni.})
reflects any relation of an algebra between operations corresponding
to the same underlying

Let us describe the operad structure of $\Free(E)$ given
in~(\ref{Zbijecky duni.}). 
Recall first that the local terminal
objects in the category $\Grc$ are ordered graphs with no internal
edges, i.e.~ordered corollas. The operad $\Free(E)$ is strictly
unital in the sense of~\cite[Definition~6.2]{SydneyI}, 
with the transformation $\eta$
in~\cite[eqn.~(44)]{SydneyI} given by the defining identity
\[
\Free(E)(\GAmma) = \bfk \hbox { if $\GAmma$ is local terminal.}
\]
We describe next the action of the groupoid $\QV(e)$
generated by local isomorphisms, local
reorderings and morphisms changing the global orders of legs of 
graphs. Let us start with the latter.

Let $T \in \gTr(\GAmma)$ be a graph tree and $\vartheta : \GAmma \to \UPsilon$ be an
isomorphism changing the global orders of the legs. In other words, the
graph $\UPsilon$ differs from $\GAmma$ only by the order of its legs.
Since the legs of $\GAmma$ are the same as the legs of the graph $\Gamma_1$
decorating the root of $T$, one also has the induced isomorphism
$\vartheta_1 :  \Gamma_1 \to \Delta_1 \in \QV(e)$, 
where $\Delta_1$ is obtained from $\Gamma_1$ by reordering its legs 
according to $\vartheta$.

We define $S\in \gTr(\UPsilon)$ to 
be the graph-tree whose underlying tree is the same as the
underlying tree of $T$, its edges
have the same decorations as the corresponding edges in $T$, 
and also the vertices have the same decorations as in $T$ except for the root
vertex of $S$ which is decorated by $\Delta_1$. If $T$ has levels
$\lambda \in \Lev(T)$, we
equip $S$ with the same levels. One then has the action
\[
\xymatrix@1@C=5em{
E(T,\lambda) = E(\Gamma_1) \ot \cdots \ot E(\Gamma_k)\
\ar[r]^{E(\vartheta_1) \ot \id^{\ot{k-1}}} &  \ E(\Delta_1) \ot \cdots \ot E(\Gamma_k) =  E(S,\lambda)
}
\]
induced by the $\QV(e)$-action
$E(\vartheta_1): E(\Gamma_1) \to E(\Delta_1)$ on the generating
collection.  The above actions assemble into an action $\Free(E)(\GAmma)
\to \Free(E)(\UPsilon)$ on the
colimits~(\ref{Zbijecky duni.}). 

The actions of local isomorphisms and local reorderings are defined
similarly, so we can be brief.  Given $T \in \gTr(\GAmma)$, a local
reordering of $\GAmma$ induces in the obvious way local reorderings of
the graphs decorating the vertices of $T$, and therefore also on the
products~(\ref{Krtek_na_mne_kouka.}). The reader may have a look at
the proof of Proposition 5.10 in \cite{SydneyII} for a detailed
description of the action of the groupoid of local isomorphisms.  The
example presented in Figure~\ref{Pristi tyden mne ceka laborator.}
should also be helpful.

Local isomorphisms act by reorderings of
the set $V$ of vertices of $\GAmma$. Note that,  
by the definition of a graph-tree, the set $V$ and its order determine
the labels of the edges of $T$, so a reordering of $V$ may
change the
labels of the edges of $T$. Thus, according to Compatibility~1 for 
graph-trees, it induces local isomorphisms of the graphs 
decorating the vertices of $T$ which
in turn act on the
products~(\ref{Krtek_na_mne_kouka.}).

Let us finally attend to the operad composition. That is, for an elementary
morphism $F \fib \GAmma \stackrel\phi\to \UPsilon$ in $\Grc$, we must describe a~map
\begin{equation}
\label{Vcera_mne_natacel_Oliver_a_pak_jsme_se_spolu_opili.}
\circ_\phi:
\Free(E)(\UPsilon) \ot  \Free(E)(F)  \longrightarrow \Free(E)(\GAmma) .
\end{equation}
Given such a $\phi$, one can find
as in
the previous pages a canonical contraction  $\widehat F \fib \GAmma
\stackrel{\widehat \phi}\to
\widehat \UPsilon$ and an isomorphism $\sigma : \UPsilon \to
\widehat \UPsilon$ in the commutative diagram
\[
\xymatrix@C1em{&\GAmma \ar[ld]_\phi \ar[rd]^{\widehat \phi}  & 
\\
\UPsilon\ar[rr]^\sigma_\cong &&\ \widehat \UPsilon.
}
\] 
Using the
equivariance~\cite[eqn.~(21)]{SydneyII} with $\omega = \id$,
$\phi'=\phi$ and $\phi'' = \widehat \phi$, 
we see that $\circ_\phi$ is uniquely determined by
$\circ_{\widehat \phi}$. So we may assume that $\phi$
in~(\ref{Vcera_mne_natacel_Oliver_a_pak_jsme_se_spolu_opili.}) is a canonical contraction.

Let $S \in \gTr(\UPsilon), R \in \gTr(F), \lambda' \in \Lev(S)$ and  $\lambda''
\in \Lev(R)$. Let also $x \in \Vert(\UPsilon)$ be the vertex over which the unique
nontrivial fiber of $\phi$ lives. We define $T \in \gTr(\GAmma)$ as the
graph-tree whose underlying tree is obtained by grafting the root of the
underlying tree of~$R$ to the leg of the underlying tree of $S$
labelled by $x$. The decorations of $T$ is inherited from the
decorations of its graph-subtrees $S$ and $R$. It is simple to check that,
since $\phi$ is a cc, $T$~is indeed a graph-tree.

We finally define $\lambda = \lambda' \circ_\phi \lambda''\in \Lev(T)$ 
by postulating that all
vertices of $R$ are below the vertices of $S$ and that the restriction
of $\lambda$ to the subtrees $S$ resp.~$R$ is $\lambda'$
resp.~$\lambda''$. The
map~(\ref{Vcera_mne_natacel_Oliver_a_pak_jsme_se_spolu_opili.}) is
then the colimit of the obvious canonical isomorphisms
\[
E(T,\lambda) \cong E(S,\lambda') \ot E(R,\lambda'').
\]

\begin{remark}
\label{Je_streda_a_melu_z_posledniho.}
When the generating collection is evenly graded, the elements of the
product~(\ref{Krtek_na_mne_kouka.}) represents the {\em same\/}
elements of $\Free(E)(\GAmma)$ for an arbitrary  choice of $\lambda$,
thus~(\ref{Zbijecky duni.}) can be replaced by a more friendly formula
\[
\Free(E)(\GAmma) \cong \bigoplus_{T \in \gTr(\GAmma)} 
\bigotimes_{v \in \Vert(T)} E(\Gamma_v).
\]
As illustrated in Warning~\ref{Dnes jdu s Jaruskou na koncert.},
this simplification is not possible for general collections.
Yet, since the input edges of each graph-tree are ordered, there
exists a preferred choice of the levels specified by the following
lexicographic rule.

Assume that $a < b$ are (the labels of) two input edges of a vertex $v
\in \Vert(T)$. Then all levels of the subtree of $T$ with the root $a$ are
{\em below\/} the levels of the subtree with the root $b$.
Denoting by $\lambda_\lex$ the above arrangement, then 
\begin{equation}
\label{Prvni_na_Vivat_tour.}
\Free(E)(\GAmma) \cong \bigoplus_{T \in \gTr(\GAmma)} \ 
E(T,\lambda_\lex).
\end{equation}
One must however keep in mind that the combination $\lambda_\lex'
\circ_\phi \lambda_\lex''$ of two lexicographic arrangements may not be
lexicographic.  Thus, if we want to use~(\ref{Prvni_na_Vivat_tour.})
the operadic composition based on the isomorphism
\[
E(T,\lambda_\lex'
\circ_\phi \lambda_\lex'') \cong E(S,\lambda_\lex') \ot E(R,\lambda_\lex'')
\]
must be followed by bringing the result back into the preferred form.  
\end{remark}

\section{Minimal model for $\termGrc$.}
\label{hadrova_panenka}

The aim of this section is to construct an explicit minimal model of the
terminal $\Grc$-operad $\termGrc$ governing non-genus graded modular
operads. 

\subsection{Free operads and derivations.}
Free $\Grc$-operads are graded,
\[
\Free(E)(\GAmma) = \bigoplus_{n \geq 0} \Free^n(E)(\GAmma),\ \Gamma
\in \Grc,
\]
where $\Free(E)^0(\GAmma) = \bbk$ and the higher pieces are given
by the modification of~(\ref{Zbijecky duni.}):
\begin{equation}
\label{Athalia}
\Free^n(E)(\GAmma) \cong 
\bigoplus_{T \in \gTr^n(\GAmma)} \  \colim{\lambda \in
  \Lev(T)} E(T,\lambda),
\end{equation}
in which $\gTr^n(\GAmma)$ is, for $n \geq 1$, the subset of
$\gTr(\GAmma)$ consisting of graph-trees $T$ with exactly $n$ vertices.
Clearly $\Free^1(E)(\GAmma) \cong E(\GAmma)$. To describe
$\Free^2(E)(\GAmma)$, we realize that there is precisely one way to
introduce levels into a graph-tree $T \in  \gTr^2(\GAmma)$,
so~(\ref{Athalia}) takes the~form
\begin{equation}
\label{Athalia1}
\Free^2(E)(\GAmma) \cong 
\bigoplus_{T \in \gTr^2(\GAmma)} \  E(\Gamma_v) \ot  E(\Gamma_u),
\end{equation}
where $\Gamma_v$ (resp.~$\Gamma_u$) is the graph decorating the vertex
$v$ at the top level of $T$ (resp.~the vertex
$u$ at the bottom level of $T$). We also have the obvious

\begin{definition}
A degree $s$ linear map $\varpi
: \Free(E) \to \Free(E)$ of collections
is a degree $s$ {\em derivation\/}~if
\[
\varpi\,\circ_\phi = \circ_\phi (\varpi \ot \id) +  \circ_\phi(\id \ot
\varpi),
\]
for every elementary morphism $F \fib \GAmma \stackrel\phi\to \UPsilon$
and $\circ_\phi$ as
in~(\ref{Vcera_mne_natacel_Oliver_a_pak_jsme_se_spolu_opili.}). 
\end{definition}

As expected, every derivation $\varpi$ is 
determined by its restriction
$
\varpi|_E : E = \Free^1(E) \to   \Free(E),
$
and every such a map extends to a derivation.

\begin{remark}
\label{sempre_dolens}
Given a linear map $\omega: E \to \Free(E)$, its extension $\varpi :
\Free(E) \to \Free(E)$ into a~derivation is obtained 
by subsequent applications of $\omega$
to the factors $E(\Gamma_i)$, $1 \leq i \leq k$, of $E(T,\lambda)$
in~(\ref{Krtek_na_mne_kouka.}), replacing each of these factors by its $\omega$-image.
\end{remark}

\subsection{Minimal models} 
They came to life, for dg associative commutative
resp.~dg Lie algebras, as the Sullivan
resp.~Quillen minimal models of rational homotopy types, 
see~\cite{T} and citations therein. 
Minimal models for (classical) operads were
introduced and studied in~\cite{zebrulka}, while minimal models for
(hyper)operads governing permutads were treated in~\cite{perm}. 
Below we give a~definition for $\Grc$-operads, 
definitions for other types of (hyper)operads featuring
in this paper are obvious modifications and we will thus not spell them out
explicitly. 

\begin{definition}
The {\em minimal model\/} of  a  dg $\Grc$-operad $\oP$ is dg
$\Grc$-operad $\Min$
together with a dg $\Grc$-operad morphism $\rho : \Min \to \oP$, such
that 
\begin{itemize}
\item [(i)]
the component $\rho(\Gamma) : \Min(\Gamma) \to
\oP(\Gamma)$ of $\rho$ is a homology isomorphism of dg vector spaces for each $\Gamma \in
\Grc$, and
\item [(ii)]
the underlying non-dg $\Grc$-operad of $\Min$ is free, and the differential
$\pa$ of $\Min$ has no constant and linear terms (the {\em minimality condition\/}).
\end{itemize}
\end{definition}

One can prove, adapting the proof of Theorem~II.3.127 in~\cite{MSS},
that minimal models are unique up to isomorphism.
Our construction of the minimal model for $\termGrc$ begins by
describing its generating $1$-connected collection.
For a vector space $A$ of dimension $k$, we denote by $\det(A) :=
\hbox {\large$\land$}^k(A)$ the
top-dimensional piece of its Grassmann algebra. If $S$ is a non-empty finite set,
we let $\det(S)$ to be the determinant of the vector space spanned
by $S$.
Given two finite sets $S_1 = \{e^1_1,\ldots,e^1_a\}$, 
$S_2 = \{e^2_1,\ldots,e^2_b\}$, we define
\[
\omega_{S_1,S_2}:
\det(S_1 \sqcup S_2) \to \det(S_1) \ot \det(S_2).
\]
by
\[
\omega_{S_1,S_2}(e^1_1 \land \cdots \land e^1_a \land  
e^2_1 \land \cdots \land e^2_b) := (e^1_1 \land \cdots \land e^1_a) \ot
(e^2_1 \land \cdots \land e^2_b).
\]
Let, for $\Gamma \in \Grc$, $\Edg(\Gamma)$ denote the set of its
internal edges, and
$\det(\Gamma) := \det(\Edg(\Gamma))$. With this notation,
the generating collection of the
minimal model for $\termGrc$ is defined as the one-dimensional vector space
\begin{equation}
\label{Flicek}
D(\Gamma) := \det(\Gamma), \ \Gamma \in \Grc, 
\end{equation}
placed in degree $|\Gamma| := {\rm card}(\Edg(\Gamma)) -1$ if $\Gamma$
has at least one internal edge, while $D(\Gamma) := 0$ if $\Gamma$ is
a corolla. Notice that for $\Gamma$ with exactly one internal edge,
$\det(\Gamma)$ is canonically isomorphic to $\bbk$.

The degree $-1$ differential~$\pa$ will be
determined by its restriction (denoted by the same symbol)
\[
\pa : D \to \Free^2(D) \subset \Free(D)
 \]
as follows. Given $T \in \gTr^2(\Gamma)$, let  $\Gamma_v,\Gamma_u \in \Grc$ 
have the same meaning
as in~(\ref{Athalia1}), and $E_v := \Edg(\Gamma_v)$, $E_u  := \Edg(\Gamma_u)$.
For $\mu \in D(\Gamma) = \det(\Gamma)$ we put 
\begin{subequations}
\begin{equation}
\label{Flicek_na_mne_kouka.}
\pa_T(\mu) := \bigoplus_{T \in \gTr^2(\Gamma)} \pa_T(\mu),
\end{equation} 
where
\begin{equation}
\label{Posledni patek v Sydney}
\partial_T(\mu) := (-1)^{|\Gamma|} \omega_{E_v,E_u} (\mu) \in
D(\Gamma_v) \ot D(\Gamma_u) \subset \Free^2(D)(\Gamma).
\end{equation}
\end{subequations}

\begin{lemma}
The derivation $\pa$ defined above is a differential, i.e.\ $\pa^2=0$.  
\end{lemma}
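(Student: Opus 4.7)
The plan is to reduce to checking that $\partial^2 = 0$ holds on each generator $\mu \in D(\Gamma)$, which suffices because $\partial^2$ is a derivation of degree $-2$ on $\Free(D)$ and hence vanishes on the whole free operad as soon as it vanishes on $D$. By formula~(\ref{Athalia}), the target $\Free^3(D)(\Gamma)$ decomposes as a direct sum over $T' \in \gTr^3(\Gamma)$ of colimits over level arrangements; it is thus enough to verify that the component of $\partial^2(\mu)$ indexed by each such $T'$ vanishes.

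Fix $T' \in \gTr^3(\Gamma)$. Such a tree has exactly two internal tree-edges, and collapsing either one produces a 2-vertex graph-tree of $\Gamma$ that appears as a summand of $\partial(\mu)$ via~(\ref{Flicek_na_mne_kouka.}); conversely, the $T'$-component of $\partial^2(\mu)$ is obtained by starting with one of these two intermediates and applying $\partial$ to the factor that can be further split to recover $T'$. Hence this component is a sum of exactly two contributions, and the goal is to show they cancel. Write $E_i := \Edg(\Gamma_{v_i})$ for $i=1,2,3$, and $E_{ij} := E_i \sqcup E_j$.

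First I would handle the linear case $T' = (v_3 \to v_2 \to v_1)$. The two intermediates are $T_a = (\Gamma_{23},\Gamma_{v_1})$ and $T_b = (\Gamma_{v_3},\Gamma_{12})$; in route (a) the second $\partial$ hits the top tensor factor with trivial Koszul sign, while in route (b) it hits the bottom factor with Koszul sign $(-1)^{|\Gamma_{v_3}|}$. The coassociativity of the determinant split,
\[
(\omega_{E_3,E_2}\otimes \id_{\det E_1})\circ \omega_{E_{23},E_1} \;=\; (\id_{\det E_3}\otimes \omega_{E_2,E_1})\circ \omega_{E_3,E_{12}},
\]
ensures that both routes produce the same three-fold splitting of $\mu$ in $\det(E_3)\ot\det(E_2)\ot\det(E_1)$, so the cancellation reduces to a parity identity among the exponents $|\Gamma|, |\Gamma_{23}|, |\Gamma_{12}|, |\Gamma_{v_i}|$, which follows from the elementary relation $|\Gamma_v|+|\Gamma_u|+1 = |\Gamma|$ applied at each split. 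For the Y-shape $T'_Y$ with root $v_1$ and leaves $v_2, v_3$, the two intermediates are $T_\alpha = (\Gamma_{v_2},\Gamma_{13})$ and $T_\beta = (\Gamma_{v_3},\Gamma_{12})$; both routes split the bottom factor at the second step but produce tensor products in different orders, namely $D(\Gamma_{v_2})\ot D(\Gamma_{v_3})\ot D(\Gamma_{v_1})$ versus $D(\Gamma_{v_3})\ot D(\Gamma_{v_2})\ot D(\Gamma_{v_1})$, which are the two representatives of the colimit $\colim_{\lambda \in \Lev(T'_Y)} D(T'_Y,\lambda)$. Under the commutativity constraint used to form that colimit, which introduces the Koszul sign $(-1)^{|\Gamma_{v_2}|\,|\Gamma_{v_3}|}$, the same kind of parity count gives the cancellation.

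The main obstacle is the combined sign bookkeeping: one must simultaneously track the prefactor $(-1)^{|\Gamma|}$ from~(\ref{Posledni patek v Sydney}), the Leibniz Koszul sign $(-1)^{|a|}$ from the derivation extension described in Remark~\ref{sempre_dolens}, the reordering signs produced by $\omega_{S_1,S_2}$ whenever its input is not already presented as an $S_1$-wedge followed by an $S_2$-wedge, and, in the Y-case, the commutativity sign that identifies different level arrangements inside the colimit. Once these are tabulated in a single parity, coassociativity of $\omega$ renders the two contributions to each $T'$ equal in absolute value and opposite in sign.
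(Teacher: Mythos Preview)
Your approach is correct and follows the same reduction as the paper: since $\partial^2$ is again a derivation, it suffices to check vanishing on the generating collection $D$. The paper's own proof stops there and leaves the verification on generators as an exercise, whereas you have actually carried out that exercise by decomposing over $\gTr^3(\Gamma)$ and handling the linear and Y-shaped trees separately.
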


\begin{proof}
It is simple to see that $\partial^2$ is a derivation as well, 
so it suffices only to  verify that $\partial^2$ vanishes on the generating
collection. We leave this as an exercise to the reader.
\end{proof}

Let $\rho : \Free(D) \to \termGrc$ be the unique map of $\Grc$-operads
whose restriction $\rho|_{D(\Gamma)}$ is, for $\Gamma \in \Grc$, given by 
\begin{equation}
\label{Obehnu_to_dnes?}
\rho|_{D(\Gamma)} := 
\begin{cases}
\id_\bbk : D(\Gamma) = \bbk \to \bbk =
\termGrc(\Gamma), &\hbox {if $|\Edg(\Gamma)| = 1$,
  while}
\\
0,& \hbox {if  $|\Edg(\Gamma)| \geq 2$.}  
\end{cases}
\end{equation}
Having all this, we formulate:

\begin{theorem}
\label{Woy-Woy}
The object $\minGrc := (\Free(D),\pa) \stackrel\rho\to (\termGrc,\pa
=0)$ is a minimal model of the terminal $\Grc$-operad.
\end{theorem}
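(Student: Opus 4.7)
The plan is to verify the two conditions of being a minimal model separately. Condition (ii) is built into the construction: the underlying $\Grc$-operad is free on $D$, and since $\partial$ is defined via~\eqref{Flicek_na_mne_kouka.} to land in $\Free^2(D)$, it has no constant or linear term. All the work therefore lies in condition (i), namely that the chain map $\rho(\Gamma):(\Free(D)(\Gamma),\partial)\to(\termGrc(\Gamma),0)$ is a homology isomorphism for every $\Gamma\in\Grc$. When $\Gamma$ is a corolla this is immediate from~\eqref{Zbijecky duni.} and~\eqref{Obehnu_to_dnes?}, so from now on we may assume $\Gamma$ has at least one internal edge, and we reduce the theorem to two assertions: first, that $(\Free(D)(\Gamma),\partial)$ is acyclic in positive degrees, and second, that $H_0$ of this complex is one-dimensional with $\rho$ the canonical isomorphism.

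For the first assertion I will run through the sequence of identifications advertised in the introduction. Theorem~\ref{podlehl_jsem} together with the observation (see Remark~\ref{Je_streda_a_melu_z_posledniho.}) that the one-dimensional generators $D(\Gamma')$ sit in fixed degrees gives a direct sum decomposition of $\Free(D)(\Gamma)$ indexed by $\gTr(\Gamma)$, where the summand attached to a graph-tree $T$ with vertex decorations $\Gamma_1,\dots,\Gamma_k$ has degree $\sum_i (|\Edg(\Gamma_i)|-1)=|\Edg(\Gamma)|-k$. Proposition~\ref{jovanica} then identifies the indexing poset $\gTr(\Gamma)$ with the construct poset $\mathcal A(\bfH_\Gamma)$, and Lemma~\ref{abspol} identifies the latter with the face lattice of the convex polytope $\mathcal C(\bfH_\Gamma,\Pi)$ of dimension $|\Edg(\Gamma)|-1$. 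Under these identifications the rank of a construct matches the homological degree of the corresponding summand, so $(\Free(D)(\Gamma),\partial)$ is isomorphic, as a graded vector space, to the (reduced, augmented) cellular chain complex of~$\mathcal C(\bfH_\Gamma,\Pi)$.

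The principal obstacle, and the step I expect to require the most care, is matching differentials, not merely the underlying graded pieces. This is where Lemma~\ref{Jeste_ani_nevim_kde_budu_v_Melbourne_bydlet.} is supposed to do the heavy lifting: the one-dimensional spaces $\det(\Gamma_v)$ supply orientations of the cells of $\mathcal C(\bfH_\Gamma,\Pi)$, and the sign $(-1)^{|\Gamma|}$ together with the factorisation map $\omega_{E_v,E_u}$ in~\eqref{Posledni patek v Sydney} should reproduce precisely the incidence signs between a top face and its codimension-one faces in the oriented cellular chain complex. I would carefully check the claim on the generating collection (where only the tree-shape $T\in\gTr^2(\Gamma)$ enters), argue that the derivation property of $\partial$ then propagates the sign match to higher strata of $\Free(D)(\Gamma)$ by induction on the number of vertices of the graph-tree, and finally invoke Lemma~\ref{Jeste_ani_nevim_kde_budu_v_Melbourne_bydlet.} to certify that a consistent global choice of orientations can be made.

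Once the chain complexes are identified, acyclicity in positive degrees follows from the contractibility of the convex polytope $\mathcal C(\bfH_\Gamma,\Pi)$. It only remains to determine $H_0$ and check that $\rho$ is an isomorphism there. The degree-zero summands correspond to graph-trees $T$ whose decorations $\Gamma_v$ each have exactly one internal edge; by~\eqref{Obehnu_to_dnes?} the map $\rho$ sends each of these generators to $1\in\bbk$. Since the polytope is connected, its zeroth cellular homology is generated by any vertex, and all vertices become cohomologous to each other via the identified boundary. Consequently $H_0(\Free(D)(\Gamma),\partial)\cong\bbk$ and $\rho$ sends a generator to the generator $1\in\termGrc(\Gamma)=\bbk$, which concludes the verification of~(i) and hence proves the theorem.
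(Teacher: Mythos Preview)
Your outline follows the paper's own route almost exactly: identify $\Free(D)(\Gamma)$ with a one-dimensional summand for each construct via Proposition~\ref{jovanica}, realize the construct poset as the face lattice of the polytope $\mathcal C(\bfH_\Gamma,\Pi)$ via Lemma~\ref{abspol}, invoke Lemma~\ref{Jeste_ani_nevim_kde_budu_v_Melbourne_bydlet.} to match the differential with the cellular boundary, and read off acyclicity and the $H_0$ computation from convexity. Two ingredients are missing from your chain of implications, and one step is slightly misconceived.

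First, Lemma~\ref{Jeste_ani_nevim_kde_budu_v_Melbourne_bydlet.} has the diamond property as a hypothesis; the paper verifies it for $\mathcal C(\bfH_\Gamma,\Pi)$ in Lemma~\ref{Panenka_na_mne_kouka.}, which you do not cite. Second, you never check that $\rho$ is a chain map. The paper does this explicitly on degree-$1$ generators, where $\Gamma$ has exactly two internal edges and one computes $\rho(\partial(a\wedge b))=1\cdot 1-1\cdot 1=0$.

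Third, your description of how Lemma~\ref{Jeste_ani_nevim_kde_budu_v_Melbourne_bydlet.} operates is backwards. You propose that the spaces $\det(\Gamma_v)$ \emph{supply} orientations of the cells and that one should then verify the signs in~\eqref{Posledni patek v Sydney} reproduce the cellular incidence signs, propagating by the derivation rule. That is not how the lemma is used, and attempting it would be laborious. The point of the lemma is that one only needs to check the purely combinatorial fact that, after choosing bases $e_C$ for the one-dimensional summands, the differential has the shape $\partial(e_C)=\sum_F \eta^F_C\, e_F$ with each $\eta^F_C\in\{\pm1\}$ and $F$ ranging over codimension-one faces of $C$; the lemma then \emph{produces} orientations making this equal to the cellular chain complex. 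The paper stresses exactly this in the remark following~\eqref{Uz_jedou_fukary.}: the explicit values of the $\eta^F_C$ need not be computed. So replace your ``carefully check the signs and propagate'' paragraph with the single verification that $\partial$ has the $\pm1$ form, cite Lemma~\ref{Panenka_na_mne_kouka.} for the diamond hypothesis, and add the short computation that $\rho\partial=0$ on degree-$1$ generators.
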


The rest of this section is devoted to the proof of
Theorem~\ref{Woy-Woy} and of the necessary auxiliary material.

\subsection{Constructs represent graph-trees.}
\label{Srni}
The material of this subsection is based on modification and
generalization of~\cite{JO}.  We start by associating to each object
$\Gamma$ of ${\tt Grc}$ a hypergraph ${\bf H}_{\Gamma}$ defined as
follows: the vertices of ${\bf H}_{\Gamma}$ are the internal edges of
$\Gamma$ and two vertices are connected by an edge in
${\bf H}_{\Gamma}$ whenever, as edges of $\Gamma$, they share a common
vertex. Observe that the leaves of $\Gamma$ play no role in the
definition of ${\bf H}_{\Gamma}$.

  \begin{example}\label{ex1}
Here is an example of the association of a hypergraph to a  graph:
 \begin{center} \raisebox{2em}{$\Gamma=$}
 \begin{tikzpicture}
    \node (E)[circle,draw=black,thick,fill=white,minimum size=2mm,inner sep=0.2mm] at (0.5,0) {$1$};
    \node (F) [circle,draw=black,thick,fill=white,minimum size=2mm,inner sep=0.2mm] at (-0.5,1.5) {$2$};
    \node (A) [circle,draw=black,thick,fill=white,minimum size=2mm,inner sep=0.2mm] at (1.5,1.5) {$3$}; 
     \node (x) [draw=none,minimum size=4mm,inner sep=0.1mm] at (0.5,2.075) { $x$}; 
          \node (y) [draw=none,minimum size=4mm,inner sep=0.1mm] at (0.5,0.85) {$y$}; 
           \node (z) [draw=none,minimum size=4mm,inner sep=0.1mm] at (2.8,1.5) {$z$}; 
            \node (u) [draw=none,minimum size=4mm,inner sep=0.1mm] at (-0.15,0.65) { $u$}; 
           \node (v) [draw=none,minimum size=4mm,inner sep=0.1mm] at (1.15,0.65) {$v$}; 
     \draw[thick] (F) to[out=40,in=140](A) node  {};
     \draw[thick] (A) to[out=40,in=-40,looseness=21](A) node {};
      \draw[thick] (F) to[out=-40,in=-140](A) node {};
    \draw[thick](E)--(F) node {};
    \draw[thick] (E)--(A) node  {};
   \draw[thick] (F)--(-1.3,1.75) node  {};
 \draw[thick] (F)--(-1.3,1.25) node  {};
 \end{tikzpicture} \quad\quad\quad 
 \begin{tikzpicture}
 \node (x) [circle,draw=black,fill=black,inner sep=0mm,minimum size=1.3mm,label={[xshift=-0.22cm,yshift=-0.28cm]{$x$}}] at (0,1) {};
\node (y) [circle,draw=black,fill=black,inner sep=0mm,minimum size=1.3mm,label={[xshift=0.2cm,yshift=-0.14cm]{$y$}}] at (1,1) {};
\node (z) [circle,draw=black,fill=black,inner sep=0mm,minimum size=1.3mm,label={[xshift=0.22cm,yshift=-0.28cm]{$z$}}] at (2,1) {};
\node (u) [circle,draw=black,fill=black,inner sep=0mm,minimum size=1.3mm,,label={[xshift=-0.22cm,yshift=-0.28cm]{ $u$}}] at (0.5,0.1) {};
\node (v) [circle,draw=black,fill=black,inner sep=0mm,minimum size=1.3mm,,label={[xshift=0.22cm,yshift=-0.28cm]{$v$}}] at (1.5,0.1) {};
\draw[thick] (z)--(v)--(u)--(x)--(y)--(z);
\draw[thick] (u)--(y)--(v)--(x);
\draw[thick] (x) to[out=40,in=140] (z);
 \end{tikzpicture}\raisebox{2.2em}{$={\bf H}_{\Gamma}$}
 \end{center}
 \end{example}

Assume that $\Gamma = (V,F)$ is a graph with 
the structure map  $g : F \to V$. Choose a subset $V' \subset V$ and a
subset $E'$  of edges of $\Gamma$
formed by the half-edges in $g^{-1}(V')
\subset F$ such that the subgraph of $\Gamma$ spanned by $E'$ is
connected. Let $\Gamma'$ be the graph $\Gamma' = (V',F')$ with $F' :=
g^{-1}(V')$, with the structure map $g' : F' \to V'$ given by the restriction of
$g$, and the involution which coincides with the
involution of $\Gamma$ on the half-edges forming the edges in~$E'$,
and which is
trivial on the remaining half-edges of $\Gamma'$. 

To simplify the
terminology, we will still call $\Gamma'$ a subgraph of $\Gamma$
determined by the set of edges $E$ though,
formally speaking, $\Gamma'$ is obtained from an actual subgraph of $\Gamma$
by cutting some of its edges in two half-edges. 
For example, the `subgraph' 
of the graph $\Gamma$ from Example~\ref{ex1} determined by the internal edge $x$ is 
\begin{center}
\begin{tikzpicture}
    \node (F) [circle,draw=black,thick,fill=white,minimum size=2mm,inner sep=0.2mm] at (-0.5,1.5) {$2$};
    \node (A) [circle,draw=black,thick,fill=white,minimum size=2mm,inner sep=0.2mm] at (1.5,1.5) {$3$}; 
     \node (x) [draw=none,minimum size=4mm,inner sep=0.1mm] at (0.5,2.075) { $x$};  
     \draw[thick] (F) to[out=40,in=140](A) node  {};
  \draw[thick] (2.3,1.25)--(A)--(2.3,1.75);
    \draw[thick](0.7,1.3)--(A)--(0.9,0.9);
    \draw[thick] (0.3,1.3)--(F)--(0.1,0.9);
   \draw[thick] (F)--(-1.3,1.75) node  {};
 \draw[thick] (F)--(-1.3,1.25) node  {};
\end{tikzpicture}
\end{center}  

\begin{lemma}
\label{connectededges}  
The connected 
subgraphs, in the above relaxed sense, 
of a graph $\Gamma$ that have at least one internal edge
are in one-to-one correspondence with the connected subsets of
${H}_{\Gamma}$, i.e.~with the non-empty subsets $X$ of vertices of
\,${\bf H}_{\Gamma}$ such that the hypergraph $({\bf H}_{\Gamma})_{X}$
is~connected.
\end{lemma}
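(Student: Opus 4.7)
The plan is to exhibit the bijection explicitly and then verify that it preserves connectedness in both directions. Define a map $\Phi$ from the set of connected subgraphs of $\Gamma$ containing at least one internal edge to $\mathcal{P}(\mathrm{Edg}(\Gamma)) \setminus \emptyset$ by sending a subgraph $\Gamma' \subseteq \Gamma$ to its set of internal edges $X := \mathrm{Edg}(\Gamma')$, which is a non-empty subset of $\mathrm{Edg}(\Gamma) = H_\Gamma$. Conversely, given $\emptyset \neq X \subseteq H_\Gamma$, define $\Psi(X)$ to be the subgraph of $\Gamma$ whose internal edges are exactly those in $X$, whose vertex set consists of all vertices of $\Gamma$ incident to some edge in $X$, and whose remaining half-edges at these vertices become legs. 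It is immediate that $\Phi$ and $\Psi$ are mutually inverse at the level of sets, since a subgraph with at least one internal edge is determined by its set of internal edges together with all incident vertices (and the non-selected half-edges at those vertices become legs of the subgraph).

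It remains to identify which subsets correspond to connected subgraphs. First I would show: if $\Gamma'$ is connected, then $({\bf H}_\Gamma)_X$ is connected. Suppose $X = X_1 \sqcup X_2$ were a disconnecting partition of $({\bf H}_\Gamma)_X$. By the definition of ${\bf H}_\Gamma$, no edge in $X_1$ shares a vertex (in $\Gamma$) with any edge in $X_2$; but then the vertex sets spanned by $X_1$ and by $X_2$ are disjoint, so $\Gamma'$ decomposes as the disjoint union of the subgraphs corresponding to $X_1$ and $X_2$, contradicting connectedness of $\Gamma'$. Conversely, if $({\bf H}_\Gamma)_X$ is connected, I would show $\Psi(X)$ is connected: any two vertices $u,v$ of $\Psi(X)$ are incident to edges $e_u, e_v \in X$, and a hyperedge-path $e_u = f_0, f_1, \dots, f_k = e_v$ in $({\bf H}_\Gamma)_X$ yields, by the definition of hyperedges in ${\bf H}_\Gamma$, a sequence of consecutively vertex-sharing edges in $X$, hence a path in $\Psi(X)$ from $u$ to $v$.

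The only delicate point, and thus the main thing to be careful about, is the precise definition of a subgraph and when two subgraphs are considered equal: since subgraphs are specified by their internal edges (with incident vertices and leftover half-edges automatically determined), $\Phi$ and $\Psi$ really are inverse bijections, and the two connectedness notions coincide as above. Therefore $\Phi$ restricts to the desired bijection between connected subgraphs of $\Gamma$ with at least one internal edge and non-empty $X \subseteq H_\Gamma$ with $({\bf H}_\Gamma)_X$ connected.
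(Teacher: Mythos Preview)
Your argument is correct and is the natural elementary proof. In fact the paper states this lemma without proof, treating it as evident once the notion of ``subgraph determined by a set of internal edges'' has been fixed (which the paper does in the paragraph and example immediately preceding the lemma). Your explicit maps $\Phi$ and $\Psi$ and the two connectedness verifications are exactly what is needed to justify it, and you were right to flag the definition of subgraph as the only point requiring care.
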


 \smallskip
\begin{remark}
Thanks to Lemma \ref{connectededges}, for  a graph $\Gamma$ and $\emptyset\neq X\subseteq \Edg(\Gamma)$, we can   index  the connected components of ${\bf H}_{\Gamma}\backslash X$ by the corresponding  subgraphs of $\Gamma$, by  writing  $${\bf H}_{\Gamma}\backslash X\leadsto {\bf H}_{{\Gamma}_1},\dots,{\bf H}_{{\Gamma}_n}.$$  
 Observe that the subgraphs ${\Gamma}_1,\dots,{\Gamma}_n$ of ${\Gamma}$ do not in general make a decomposition of ${\Gamma}$, in the sense that the removal  of the edges from the set $X$ may result in a number of subgraphs of ${\Gamma}$ reduced to a corolla without internal edges.
\end{remark}

\begin{proposition}
\label{jovanica}
There exists a natural isomorphism $\alpha_\Gamma : {\mathcal A}({\bf
  H}_{\Gamma}) \stackrel\cong\longrightarrow {\tt gTr}(\Gamma)$ between the abstract
  polytope ${\mathcal A}({\bf H}_{\Gamma})$ of constructs of the
  hypergraph $\H_\Gamma$ and
the poset ${\tt gTr}(\Gamma)$ of graph-trees such that $\gr(T)
=\Gamma$.
\end{proposition}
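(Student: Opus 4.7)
\smallskip
\noindent\emph{Proof plan for Proposition~\ref{jovanica}.}

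The plan is to construct $\alpha_\Gamma$ by recursion on the structure of constructs. The base case $C=H=\Edg(\Gamma)$ (a single-vertex construct) is sent to the graph-tree with a single vertex decorated by $\Gamma$ itself, whose external input leaves are labelled by the vertices of $\Gamma$. For a composite construct $C = X\{C_1,\ldots,C_n\}$ with ${\bf H}_\Gamma\setminus X \leadsto {\bf H}_{\Gamma_1},\ldots,{\bf H}_{\Gamma_n}$, I invoke Lemma~\ref{connectededges} to identify the $\Gamma_i$'s with honest connected subgraphs of $\Gamma$, and form the graph $\Gamma^X$ obtained from $\Gamma$ by contracting each $\Gamma_i$ into a single new vertex labelled by $\min V(\Gamma_i)$, while leaving the edges of $X$ intact. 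Then $\alpha_\Gamma(C)$ is the graph-tree whose root is decorated by $\Gamma^X$, with the recursively defined subtrees $\alpha_{\Gamma_i}(C_i)$ grafted onto the input edges corresponding to the new vertices; the remaining input edges of the root, corresponding to uncontracted vertices of $\Gamma$, become external leaves.

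The next step is to verify that $\alpha_\Gamma(C) \in \gTr(\Gamma)$. Compatibility~1 at the root holds because the vertices of $\Gamma^X$ are exactly the union of the uncontracted vertices of $\Gamma$ (matched to external leaves) together with the new contracted vertices $\min V(\Gamma_i)$ (matched to the internal input edges, whose min-labels agree with those vertex labels by construction). Compatibility~2 reduces to observing that the legs of the subgraph $\Gamma_i\subseteq\Gamma$, viewed as an object of $\Grc$, are precisely the half-edges of $\Gamma^X$ incident to the new vertex for $\Gamma_i$. The identity $\gr(\alpha_\Gamma(C)) = \Gamma$ then follows by induction, since the vertex insertion that produces $\gr$ inverts the subgraph contraction defining $\Gamma^X$. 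Essential here is the fact that the $V(\Gamma_i)$ are pairwise disjoint: a common vertex of $\Gamma$ would force edges from different components to lie in a common hyperedge of ${\bf H}_\Gamma\setminus X$, contradicting the component decomposition.

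The inverse $\beta_\Gamma$ is defined symmetrically. Given $T\in\gTr(\Gamma)$ with root decorated by $\Gamma_r$ and grafted subtrees $T_1,\ldots,T_n$, set
\[
\beta_\Gamma(T):=\Edg(\Gamma_r)\{\beta_{\Gamma'_1}(T_1),\ldots,\beta_{\Gamma'_n}(T_n)\},
\]
where $\Gamma'_i$ is the subgraph of $\Gamma$ that vertex-inserts into the $i$-th grafting vertex of $\Gamma_r$, i.e.\ $\Gamma'_i := \gr(T_i)$. Lemma~\ref{connectededges} together with Compatibility~2 ensures that $\Edg(\Gamma_r)$ is a connected subset of ${\bf H}_\Gamma$ and that the $\Edg(\Gamma'_i)$ are exactly the connected components of its complement, so the construct is well-formed. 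A mutual induction on the number of vertices yields $\alpha_\Gamma\beta_\Gamma=\id$ and $\beta_\Gamma\alpha_\Gamma=\id$. Naturality is automatic since no choices enter the construction.

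Finally, for the order-isomorphism, both posets are generated by a single ``one-step merge'' relation. On the construct side this is edge contraction $Y\{X\{C_{11},\ldots\},C_2,\ldots\}\leq (Y\cup X)\{C_{11},\ldots,C_2,\ldots\}$; on the graph-tree side it is $T\succ T/e$ for an internal edge $e$. Under $\alpha_\Gamma$ these correspond because vertex-insertion of the graph with edge set $X$ into the vertex-with-incident-edges-$Y$ produces a graph with edge set $X\sqcup Y$, matching the construct-level union. The principal technical obstacle is the ordering and labelling bookkeeping: one must verify at each recursive step that the ordered sets of vertices of $\Gamma^X$, the local orders of flags, and the min-labels of internal edges of $T$ are mutually coherent with the conventions of Compatibility~1 and~2, so that the recursion composes cleanly with the inherited orderings on each subgraph $\Gamma_i\subset\Gamma$.
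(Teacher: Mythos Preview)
Your proof is correct and follows essentially the same recursive strategy as the paper: define $\alpha_\Gamma$ by induction on the construct, decorate the root by the graph obtained via iterated canonical contraction of the subgraphs $\Gamma_i$, define the inverse by reading off edge sets at each vertex, and match the generating ``one-step merge'' relations of the two posets. One small slip: you assert that $\Edg(\Gamma_r)$ is a connected subset of $\bfH_\Gamma$, but this is false in general (take $\Gamma$ a path with three edges $a,b,c$ and contract $b$: then $a,c$ share no vertex in $\Gamma$) and is in any case not required by the definition~\texttt{(C2)} of a construct---only your second claim, that the $\Edg(\Gamma'_i)$ are exactly the connected components of the complement, is needed and is correct.
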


\begin{proof} 
We define the announced one-to-one correspondence $\alpha_{\Gamma}$ between
constructs  \hbox{$C:{\bf H}_{\Gamma}$} and  graph-trees $T \in \gTr(\Gamma)$  by induction on the number of vertices of   $C$.  
If $C$ is the maximal construct $\Edg(\Gamma): \bfH_\Gamma$, then
$\alpha_\Gamma(T)$ is the planar rooted corolla
\begin{center} \raisebox{3em}{$\alpha_{\Gamma}(C)=$}\,
\psscalebox{0.7 0.7} 
{\begin{pspicture}(0,0.0582557)(13,4.0582557)
\newgray{kgray}{.9}
\psline[linecolor=black, linewidth=0.04, arrowsize=0.2cm 2.0,arrowlength=1.7,arrowinset=0.0]{->}(4.5,1.9762975)(4.5,4.1512976)
\psline[linecolor=black, linewidth=0.04](4.5,1.9512974)(2,0)
\psline[linecolor=black, linewidth=0.04](4.5,1.9512974)(3,0)
\psline[linecolor=black, linewidth=0.04](4.5,1.9512974)(7,0)
\psdots[linecolor=gray, fillstyle=solid, dotstyle=o, dotsize=1.7, fillcolor=kgray](4.5,1.9512974)
\rput[l](4.3,1.95){$\,\Gamma$}
\rput[l](4.75,0){$\dots$}
\end{pspicture}}  
\end{center}
with the vertex decorated by $\Gamma$ and legs labelled by
the ordered set $\Vert(\Gamma)$.

Suppose that $C=X\{C_1,\dots,C_p\}$, $X \subset \Edg(\Gamma)$,
${\bf H}_{\Gamma}\backslash X\leadsto {\bf H}_{1},\dots, {\bf H}_{p}$
and \hbox{$C_i:{\bf H}_i$} for $1 \leq i \leq p$. By
Lemma~\ref{connectededges}, there are connected 
subgraphs $\Gamma_i$ of $\Gamma$
such that $\bfH_i = {\bf H}_{\Gamma_i}$. There, moreover, exists a
graph $\Gamma_X \in \Grc$ such that 
$\Rada \Gamma1p$ are the fibers of the iterated canonical contraction
$\Gamma \to {\Gamma}_X$.
This understood, we are in the situation when
${\bf H}_{\Gamma}\backslash X\leadsto {\bf H}_{{\Gamma}_1},\dots, {\bf
  H}_{{\Gamma}_p}$ and \hbox{$C_i:{\bf H}_{{\Gamma}_i}$}, $1 \leq i
\leq p$.

The root vertex  of the graph-tree $\alpha_{\Gamma}(C)$ will be
decorated by $\Gamma_X$.
We already have, by induction, the
graph-trees $\alpha_{\Gamma_i}(C_i)$, and each of these
trees is connected with  
the root of $\alpha_{\Gamma}(C)$ by the edge bearing the
label of the vertex of $\Gamma_X$ to which $\Gamma_i$ has been 
contracted. We believe that
Figure~\ref{Zviratka-se-fotila-ve-Woy-Woy.} makes this construction clear.
The inductive step is finished by joining to the root of the
graph-tree $\alpha_{\Gamma}(C)$ the legs indexed by the remaining vertices
of $\Gamma_X$. 

\begin{figure}[h]
\begin{center}
\newgray{kgray}{.9}
  \psscalebox{1.0 1.0} 
{
\begin{pspicture}(0,-3.1006334)(7.8153715,3.1006334)
\psline[linecolor=black, linewidth=0.04, arrowsize=0.2cm 2.0,arrowlength=1.7,arrowinset=0.0]{->}(3.7053716,1.0186751)(3.6,3.193675)
\psline[linecolor=black, linewidth=0.04, arrowsize=0.05291667cm 2.0,arrowlength=1.4,arrowinset=0.0]{->}(3.0053718,-1.806325)(3.9053717,1.0186751)
\psline[linecolor=black, linewidth=0.04, arrowsize=0.05291667cm 2.0,arrowlength=1.4,arrowinset=0.0]{->}(6.4053717,-0.6063249)(3.9053717,1.0186751)
\psline[linecolor=black, linewidth=0.04, arrowsize=0.05291667cm 2.0,arrowlength=1.4,arrowinset=0.0]{->}(1.2053717,-0.6063249)(3.9053717,0.99367505)
\psdots[linecolor=gray, dotstyle=o, dotsize=1.7, fillcolor=kgray](6.6053715,-0.70632493)
\psdots[linecolor=gray, dotstyle=o, linewidth=0.04, dotsize=1.7, fillcolor=kgray](3.0053718,-2.106325)
\psdots[linecolor=gray, dotstyle=o, dotsize=1.7, fillcolor=kgray](1.0053717,-0.70632493)
\psdots[linecolor=gray, dotstyle=o, dotsize=1.7, fillcolor=kgray](3.6053717,0.8936751)
\rput(1,-0.70632493){$\alpha_{\Gamma_1}(C_1)$}
\rput[l](2.3,-2.106325){$\alpha_{\Gamma_2}(C_2)$}
\rput[l](5.9,-0.70632493){$\alpha_{\Gamma_p}(C_p)$}
\rput[l](3.3718,0.9367505){$\Gamma_X$}
\rput[l](4.4053717,-0.8063249){$\cdots$}
\end{pspicture}
}
\end{center}
\caption{An inductive construction of $\alpha_\Gamma(C)$.
\label{Zviratka-se-fotila-ve-Woy-Woy.}}
\end{figure}
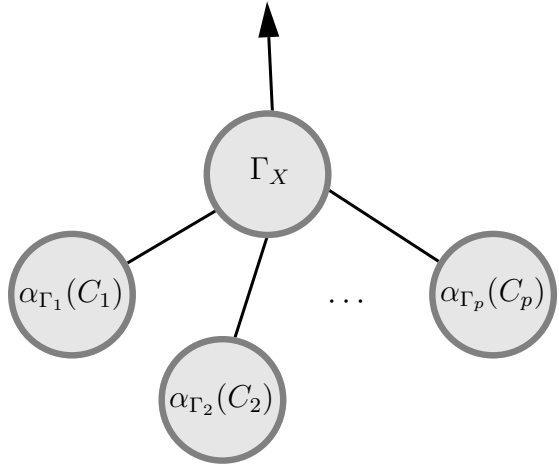

\begin{figure}[h]
\begin{center}
\resizebox{4.5cm}{!}{\begin{tikzpicture}
\node (a) [circle,draw=none] at (0,0) {
\begin{tikzpicture}
\node (A) [circle,draw=black,thick,fill=white,minimum size=5mm,inner sep=0.3mm] at (1.5,1.5) {$1$}; 
 \node (z) [draw=none,minimum size=4mm,inner sep=0.1mm] at (2.85,1.5) {$y$}; 
 \node (z) [draw=none,minimum size=4mm,inner sep=0.1mm] at (2.5,2.1) {$c$}; 
 \node (z) [draw=none,minimum size=4mm,inner sep=0.1mm] at (2.2,0.9) {$d$}; 
 \node (z) [draw=none,minimum size=4mm,inner sep=0.1mm] at (1.7,2.85) {$x$}; 
 \node (z) [draw=none,minimum size=4mm,inner sep=0.1mm] at (1.1,2.3) {$a$}; 
 \node (z) [draw=none,minimum size=4mm,inner sep=0.1mm] at (0.85,1.05) {$i$}; 
 \node (z) [draw=none,minimum size=4mm,inner sep=0.1mm] at (0.7,2) {$j$}; 
 \node (z) [draw=none,minimum size=4mm,inner sep=0.1mm] at (2.25,2.35) {$b$}; 
  \draw[thick] (A) to[out=50,in=110,looseness=21](A) node {};
 \draw[thick] (A) to[out=30,in=-30,looseness=21](A) node {};
\draw[thick] (A) -- (0.5,1.8);
\draw[thick] (A) -- (0.5,1.2);
\end{tikzpicture}
};
\node (b) [circle,draw=none] at (0,-5) {
\begin{tikzpicture}
    \node (E)[circle,draw=black,thick,fill=white,minimum size=5mm,inner sep=0.3mm] at (0.5,0) {$1$};
    \node (F) [circle,draw=black,thick,fill=white,minimum size=5mm,inner sep=0.3mm] at (-0.5,1.5) {$2$};
    \node (A) [circle,draw=black,thick,fill=white,minimum size=5mm,inner sep=0.3mm] at (1.5,1.5) {$3$}; 
\node (z) [draw=none,minimum size=4mm,inner sep=0.1mm] at (2.8,1.5) {$z$}; 
\node (u) [draw=none,minimum size=4mm,inner sep=0.1mm] at (-0.15,0.65) { $u$}; 
\node (v) [draw=none,minimum size=4mm,inner sep=0.1mm] at (1.15,0.65)
{$v$}; 
 \node (z) [draw=none,minimum size=4mm,inner sep=0.1mm] at (0,1.9) {$a$}; 
 \node (z) [draw=none,minimum size=4mm,inner sep=0.1mm] at (0.1,0.98) {$c$}; 
 \node (z) [draw=none,minimum size=4mm,inner sep=0.1mm] at (1,1.9) {$b$}; 
 \node (z) [draw=none,minimum size=4mm,inner sep=0.1mm] at (.85,0.98) {$d$}; 
 \node (z) [draw=none,minimum size=4mm,inner sep=0.1mm] at (-1.1,1.05) {$i$}; 
 \node (z) [draw=none,minimum size=4mm,inner sep=0.1mm] at (-1.1,2) {$j$}; 
     \draw[thick] (.3,1.2)-- (F) -- (0.3,1.8);
     \draw[thick] (A) to[out=30,in=-30,looseness=21](A) node {};
      \draw[thick] (0.7,1.2)--(A)--(0.7,1.8);
    \draw[thick](E)--(F) node {};
    \draw[thick] (E)--(A) node  {};
   \draw[thick] (F)--(-1.3,1.75) node  {};
 \draw[thick] (F)--(-1.3,1.25) node  {};
\end{tikzpicture}};
\draw[thick] (-0.2,-0.15) circle (1.5cm);\draw[thick] (-0.2,-5) circle (2.4cm);
\draw [thick,decoration={markings,mark=at position 1 with
    {\arrow[scale=3,>=stealth]{>}}},postaction={decorate}]  (-0.2,1.35) -- (-0.2,2.2);
\draw[thick] (-0.2,-1.65)--(-0.2,-2.6);
\draw[thick] (-0.2,-7.4)--(-0.2,-8.75);
\draw[thick] (-1.5,-8.75)-- (-1,-7.26);
\draw[thick] (1.1,-8.75)-- (0.6,-7.26);
\end{tikzpicture}}
\end{center}
\caption{An example of $\alpha_\Gamma(C)$.
The symbols $x,y,z$ are labels internal edges, 
the symbols $a,b,c,d,i,j$ are labels of half-edges.
\label{patek}}
\end{figure}

The inverse of $\alpha_{\Gamma}$ is defined by extracting the
construct from a graph-tree $T$ in the following way. First, remove
all the leaves of $T$ and then, for each vertex of $T$, replace the
graph that decorates that vertex by the maximal construct of its
associated hypergraph. 
In more detail, assume that $T \in \gTr(\Gamma)$, $\Gamma \in \Grc$. The
underlying rooted tree of the construct 
$\alpha^{-1}_{\Gamma}(T)$ is obtained from the underlying tree of $T$ 
by removing its legs.
The vertex of  $\alpha^{-1}_{\Gamma}(T)$ corresponding to a
vertex $v \in \Vert(T)$ decorated by $\Gamma_v\in \Grc$ is decorated
by the set $\Edg(\Gamma_v) \subset \Edg(\Gamma)$ of edges of $\Gamma_v$.  

There is the following inductive, alternative construction of
$\alpha^{-1}_\Gamma(T)$ that leads manifestly to 
a construct of $\bfH_\Gamma$. Assume that
$e_1,\ldots,e_s \in V$ are  the labels of the 
incoming internal edges of a vertex $v \in \Vert(T)$, and 
that $v_1,\ldots,v_s \in \Vert(T)$ are the initial vertices of these
edges. Further, let $T_i$ be the maximal rooted
graph-subtree of $T$ with the root $v_i$ and 
$\Gamma_i := \gr(T_i)$, $1 \leq i \leq s$. Then the
corresponding subtree of  $\alpha^{-1}_{\Gamma}(T)$ is the construct 
\[
\Edg(\Gamma_v)\{\alpha^{-1}_{\Gamma_1}(T_1),\ldots,\alpha^{-1}_{\Gamma_s}(T_s)\}.
\]  
Notice that the construct $\alpha^{-1}_{\Gamma}(T)$ inherits the
planar structure of $T$.
It is easy to verify that the correspondence 
\begin{equation}
\label{semper}
\gTr(\Gamma) \ni T \longleftrightarrow
\alpha_\Gamma(T) \in \{C\  |\ C: \H_\Gamma\}
\end{equation}
preserves the poset structures.
\end{proof}

\begin{example}
 For the graph $\Gamma$ from Example \ref{ex1}, the
 graph-tree $\alpha_\Gamma(C)$ associated to the  
construct $C=\{x,y\}\{\{u,v,z\}\}$ of
 the hypergraph ${\bf H}_{\Gamma}$ is shown in 
Figure~\ref{patek}.
\end{example}

For an object $\Gamma$ of ${\Grc}$ and a construct
$C:{\bf H}_{\Gamma}$, let ${\Lev}(C)$ denote the chaotic groupoid whose
objects are all possible arrangements of levels of $C$, whereby a
level of a construct is defined analogously as the one of a graph
tree. It is clear that the correspondence~(\ref{semper}) defines a
canonical isomorphism between ${\Lev}(C)$ and
$\Lev(\alpha_\Gamma(C))$, thus each $1$-connected collection $E$
promotes into a functor
$E: \Lev(C) \longrightarrow \Vect$ in the diagram 
\[
\xymatrix{&\Vect
\\
\Lev(C)\ar[r]^\cong \ar[ur]^E&\ar[u]_E\Lev(\alpha_\Gamma(C))
}
\]
where the vertical up-going arrow is~(\ref{Zviratka_mi_pomahaji.}).
The following reformulation of Theorem~\ref{podlehl_jsem} is a~direct
consequence of Proposition~\ref{jovanica}.
\begin{theorem}
For a $1$-connected collection $E$,  the arity $\Gamma$ piece of the free
operad $\Free(E)$ is given by 
\begin{equation}
\label{Free constructs.}
\Free(E)(\Gamma) \cong 
\begin{cases}
\displaystyle \bigoplus_{C:{\H}_\Gamma} \  \colim{\varsigma \in
  {\Lev}(C)} E(C,\varsigma)&\hbox {if $\Gamma$ has at least one internal edge,
  and}
\\
\bfk&\hbox {if $\Gamma$ has no internal edges.}
\end{cases}
\end{equation}
\end{theorem}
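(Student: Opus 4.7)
The plan is to transport the description of $\Free(E)(\Gamma)$ provided by Theorem~\ref{podlehl_jsem} along the poset isomorphism $\alpha_\Gamma : {\mathcal A}(\bfH_\Gamma) \stackrel{\cong}{\longrightarrow} \gTr(\Gamma)$ of Proposition~\ref{jovanica}. First I would invoke Theorem~\ref{podlehl_jsem} to write
\[
\Free(E)(\Gamma) \;\cong\; \bigoplus_{T \in \gTr(\Gamma)} \colim{\lambda \in \Lev(T)} E(T,\lambda)
\]
in the case when $\Gamma$ has at least one internal edge. Proposition~\ref{jovanica} supplies a canonical order-preserving bijection between $\gTr(\Gamma)$ and the set of nonempty constructs $C : \bfH_\Gamma$, so the indexing set of the direct sum can be relabelled term-by-term.

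It then remains to match, for each paired $T = \alpha_\Gamma(C)$, the two colimits
\[
\colim{\lambda \in \Lev(T)} E(T,\lambda) \;\cong\; \colim{\varsigma \in \Lev(C)} E(C,\varsigma).
\]
For this I would appeal to the canonical isomorphism of chaotic groupoids $\Lev(C) \cong \Lev(\alpha_\Gamma(C))$ flagged in the paragraph immediately preceding the theorem: the underlying rooted tree of the construct $\alpha_\Gamma^{-1}(T)$ is obtained from that of $T$ by amputating its legs, so a choice of levels on one side determines exactly a choice of levels on the other. Moreover, the vertex of $C$ corresponding to $v \in \Vert(T)$ is decorated by $\Edg(\Gamma_v)$, i.e.\ by precisely the datum that determines the factor $E(\Gamma_v)$ of $E(T,\lambda)$ in~(\ref{Krtek_na_mne_kouka.}); hence the diagrams $E : \Lev(C) \to \Vect$ and $E : \Lev(T) \to \Vect$ become naturally isomorphic under the identification, and their colimits coincide.

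For the degenerate case of $\Gamma$ with no internal edges, both clauses give $\bbk$ directly: the hypergraph $\bfH_\Gamma$ is empty, so ${\mathcal A}(\bfH_\Gamma) = \{\emptyset\}$ carries no nonempty construct and $\gTr(\Gamma) = \emptyset$, while the free operad retains its unit component thanks to the strict unitality recalled in the paragraph following Theorem~\ref{podlehl_jsem}. No substantial obstacle is anticipated; the only mildly delicate point is verifying that the bijection on the level-groupoids is compatible with the functors $E(-)$ on both sides, but this compatibility is built into the inductive construction of $\alpha_\Gamma$ in Proposition~\ref{jovanica}, which preserves both the planar underlying tree structure and the vertex decorations.
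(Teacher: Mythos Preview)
Your proposal is correct and follows exactly the approach the paper takes: the paper simply declares the theorem ``a direct consequence of Proposition~\ref{jovanica}'' (together with the groupoid isomorphism $\Lev(C)\cong\Lev(\alpha_\Gamma(C))$ set up in the preceding paragraph), and your argument spells out precisely that consequence. There is nothing to add.
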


\subsection{A chain complex.} 
\label{Za_chvili_volam_Mikesovi.}
In this subsection we recall a chain complex associated to a convex polyhedron
featuring in Lemma~\ref{Jeste_ani_nevim_kde_budu_v_Melbourne_bydlet.} below.
Let therefore $K$ be such an~$n$-dimensional polyhedron realized as the convex hull of finitely
many points in $\bbR^n$.  Each $k$-dimensional
face  $e$ of~$K$, $0 \leq k \leq n$, 
is then embedded canonically into a $k$-dimensional affine
subspace $\A_e$ of $\bbR^n$, namely into the span of its vertices. 
By an {\em orientation\/} of $e$ we
understand an orientation of $\A_e$.  For $k > 0$, that orientation is
given by choice of a frame in $\A_e$. If $k=0$, $\A_e$ is a~point, and
the orientation is a sign assigned to that point.  We say that $K$ is
{\em oriented\/}, if an orientation of each face has been specified.

Assume that $a$ is a codimension one subface of $e$ and that the dimension
of $a$ is $\geq 1$. 
Clearly $\A_a$ divides $\A_e$ into two half-spaces. Denote by $\A^a_e
\subset \A_e$ the
one having non-empty intersection with $K$. Let the orientation 
of~$a$ be given by  linearly independent vectors $(\Rada v1{k-1})$ in
$\A_a$. We say that an orientation of $a$ is {\em compatible\/} with
the orientation of $e$ if  the frame $(\Rada v1{k-1}, n)$ in $\A^a_e$,
where $n$ is a vector normal to
$\A_a \subset \A^a_e$, defines the orientation of $e$, 
cf.~Figure~\ref{S_Jarkou_u_Pakousu.} (left) where $k=2$. A modification
of this notion to $0$-dimensional $a$'s is~obvious.

\begin{figure}[h]
\[
\psscalebox{.8 .8} 
{
\begin{pspicture}(0,-1.4805)(11.420455,2.1204805)
\psline[linecolor=black, linewidth=0.04](0.4,2.1003487)(0.4,-2.099651)
\rput{-112.21667}(0.68873215,0.46336102){\psarc[linecolor=black, linewidth=0.04, linestyle=dashed, dash=0.17638889cm 0.10583334cm, dimen=outer](0.5,0.0){2.1}{20.410843}{206.00946}}
\psdots[linecolor=black, dotsize=0.22](0.4,-0.09965119)
\psline[linecolor=black, linewidth=0.04, arrowsize=0.05291667cm 3.05,arrowlength=2.14,arrowinset=0.0]{<-}(0.4,0.7003488)(0.4,0.100348815)(0.4,0.100348815)
\psline[linecolor=black, linewidth=0.04, arrowsize=0.05291667cm 3.05,arrowlength=2.14,arrowinset=0.0]{<-}(1.2,-0.09965119)(0.4,-0.09965119)
\psline[linecolor=black, linewidth=0.04, arrowsize=0.05291667cm 3.05,arrowlength=2.14,arrowinset=0.0]{<-}(1.0,1.3003488)(1.0,0.5003488)
\psline[linecolor=black, linewidth=0.04, arrowsize=0.05291667cm 3.05,arrowlength=2.14,arrowinset=0.0]{<-}(1.8,0.5003488)(1.0,0.5003488)
\rput[bl](-.2,0.30034882){$v_1$} \rput[bl](-.2,2.30034882){$\A_a$}
\rput[bl](0.8,-0.4996512){$n$} \rput[bl](1.5,-1){$\A^a_e$}
\rput[bl](1.2,1.1003488){$r_1$}
\rput[bl](1.6,0.7003488){$r_2$}
\psline[linecolor=black, linewidth=0.04](8.8,-0.89965117)(11.25,0.1)
\psline[linecolor=black, linewidth=0.04](8.8,-0.89965117)(6.06,-.09)
\rput{-21.812042}(0.91996664,3.1753144){\psarc[linecolor=black, linewidth=0.04, linestyle=dashed, dash=0.17638889cm 0.10583334cm, dimen=outer](8.699906,-0.7996512){2.6999066}{40.410843}{186.00946}}
\psdots[linecolor=black, dotsize=0.22](8.8,-0.89965117)
\rput[bl](8.8,-0.4996512){$a$}
\rput(7.2,0){$e'$}
\rput(10.0,0){$e''$}
\rput[bl](8.6,0.90034884){$h$}
\end{pspicture}
}
\]
\caption{Configurations of $a$ and $e$ (left) and $a, e', e''$ and
  $h$ (right).\label{S_Jarkou_u_Pakousu.}}
\end{figure}
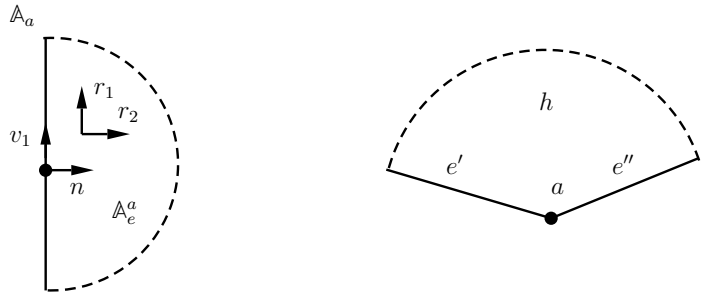

We assign to $K$ a chain complex $(C_*(K),\pa)$ of free abelian groups
whose $k$th piece $C_k(K)$ is generated by $k$-dimensional faces
of $K$. The value of the differential on a $k$-dimensional generator
$\lambda$ is defined by
\[
\pa(\lambda) = \sum \eta^\delta_\lambda \cdot \delta,
\]
where  $\delta$ runs over all
codimension one faces of $\lambda$ and 
\[
\eta^\delta_\lambda :=
\begin{cases}
+1,&\hbox {if $\delta$ is oriented compatibly with $\lambda$, and}
\\
-1,&\hbox {otherwise.}
\end{cases}
\]
It follows from standards methods of algebraic topology that
$(C_*(K),\pa)$ is acyclic in positive dimensions while its $0$th
homology equals ${\mathbb Z}$.

\subsection{An ingenious lemma}
Let $L = (L,\prec)$ be the face poset of an $n$-dimensional
polyhedron $K$, ordered by the inclusion. Assume that $K$
is such that the following `diamond' condition  is satisfied.

\begin{DiamonD}
Let $0< k < n$ and let $a$ be a $(k-1)$-dimensional face of $K$ which is
a common boundary of two $k$-dimensional faces $e',e''$. Then there
exists a $(k+1)$-dimensional face $h$ with $e'$ and $e''$ in its boundary. 
\end{DiamonD}

A concise way to formulate the diamond condition is to say that the
existence of $e'$ and $e''$ with $a \prec e',e''$ implies the
existence of some $h$ with $e',e'' \prec h$, diagrammatically
\[
\xymatrix@C=1em@R=.7em{&h&
\\
e'    \ar@{^{(}->}[ur]       && \ e'',  \ar@{_{(}->}[ul]  
\\
&a    \ar@{_{(}->}[ur]    \ar@{^{(}->}[ul]         &
}\tag{$\Diamond$}
\]
hence the name. It follows from the properties of abstract polytopes
that $e'$ and $e''$ are the only faces in the interval $[a,h]$.
The diamond
condition 
guarantees that the inductive construction of an orientation 
in the proof of
Lemma~\ref{Jeste_ani_nevim_kde_budu_v_Melbourne_bydlet.} below is
independent of the choices. 

The diamond condition need not be satisfied by
a~general~polytope. An example is the pyramid, with $e',e''$  a pair of
opposite $1$-dimensional edges meeting at the apex.

Assume that $(C_*(L),\pa)$ is a chain complex such that each $C_k(L)$ is the
free abelian group generated by $k$-dimensional elements of $L$, $0
\leq k \leq n$. Suppose moreover that, for each $\lambda \in L$,
$\pa(\lambda)$ is of the form
\[
\pa(\lambda) = \sum \eta^\delta_\lambda \cdot \delta,
\]
where $\eta^\delta_\lambda \in \{-1,+1\}$ and $\delta$ runs over all
codimension one faces of $\lambda$. Then one has:

\begin{lemma}
\label{Jeste_ani_nevim_kde_budu_v_Melbourne_bydlet.}
The faces of $K$ could be oriented so that $(C_*(L),\pa)$
is the chain complex $(C_*(K),\pa)$ recalled in
Subsection~\ref{Za_chvili_volam_Mikesovi.}. 
\end{lemma}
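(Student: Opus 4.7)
The plan is to construct the orientations inductively by codimension. First I would fix an arbitrary reference orientation on every face of $K$. This yields a topological sign function which, to avoid cumbersome notation, I will denote $\tau^\delta_\lambda\in\{\pm1\}$ for each covering pair $\delta\prec\lambda$. The task then reduces to flipping some of these reference orientations so that the resulting topological signs coincide with the given abstract signs $\eta^\delta_\lambda$ on every cover. Flipping the orientation of a single face $\lambda$ flips $\tau^\delta_\lambda$ for all $\delta\prec\lambda$ and also flips $\tau^\lambda_\mu$ for all $\lambda\prec\mu$, so the procedure must be carried out coherently.

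I would induct on the codimension $c$ of the face being oriented. The base case is trivial: $K$ itself has been oriented. For $c=1$, each codimension-one face $\delta$ is covered only by $K$, so I orient $\delta$ by the unique choice which turns $\tau^\delta_K$ into $\eta^\delta_K$. For the inductive step, assume that all faces of codimension $<c$ have been oriented so that $\tau^\gamma_\mu=\eta^\gamma_\mu$ on every covering pair $\gamma\prec\mu$ among them. For each codimension-$c$ face $a$, pick any codimension-$(c-1)$ face $e$ with $a\prec e$ and orient $a$ so that $\tau^a_e=\eta^a_e$. Everything then boils down to showing that this orientation of $a$ is independent of the auxiliary choice of $e$.

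The key local computation uses the diamond property together with $\partial^2=0$ applied twice. Suppose $a\prec e'$ and $a\prec e''$ share a common super-cover $h$ of codimension $c-2$; by the diamond condition the interval $[a,h]$ consists exactly of $\{a,e',e'',h\}$, so the coefficient of $a$ in $\partial^2 h$ simplifies to a two-term identity, both for $\eta$ and for $\tau$:
\begin{equation*}
\eta^a_{e'}\,\eta^{e'}_h+\eta^a_{e''}\,\eta^{e''}_h=0,\qquad
\tau^a_{e'}\,\tau^{e'}_h+\tau^a_{e''}\,\tau^{e''}_h=0.
\end{equation*}
The inductive hypothesis gives $\eta^{e'}_h=\tau^{e'}_h$ and $\eta^{e''}_h=\tau^{e''}_h$, and comparing the two identities one deduces $\eta^a_{e'}/\tau^a_{e'}=\eta^a_{e''}/\tau^a_{e''}$. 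Hence the orientations induced on $a$ from $e'$ and from $e''$ agree whenever such a common $h$ exists.

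The main obstacle is upgrading this pairwise agreement into unconditional independence of $e$. What is needed is the connectedness of the graph whose vertices are the codimension-$(c-1)$ faces covering $a$ and whose edges join pairs lying below a common codimension-$(c-2)$ face. This is a standard property of the face poset of a convex polytope: the upper interval $(a,K]$ is itself isomorphic to the face lattice of a convex polytope (the figure of $a$), and the graph in question is exactly the $1$-skeleton of that polytope, which is connected. Chaining the pairwise agreement along any path in that $1$-skeleton then produces a well-defined orientation of $a$ compatible with every $e$ lying above it, completing the induction and thus the proof.
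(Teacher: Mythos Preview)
Your proof is correct and follows essentially the same downward-induction strategy as the paper: orient the top cell arbitrarily, then propagate orientations down one codimension at a time, using $\partial^2=0$ together with the diamond property to check consistency when a face $a$ is covered by two different $e',e''$. Your framing via a fixed reference orientation and the auxiliary signs $\tau^\delta_\lambda$ is a clean algebraic repackaging of the paper's more geometric argument (which argues directly with compatible orientations and a local picture of the section near $a$), but the logical skeleton is identical.

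One remark: your final paragraph about connectedness of the ``upper link'' is unnecessary here. The diamond property, as stated in the paper, asserts that for \emph{any} pair $e',e''$ covering $a$ there exists a common $h$ with $e',e''\prec h$; it is not merely the statement that rank-$2$ intervals have exactly two intermediate elements (that latter fact is a general property of polytopes and is invoked separately). Hence the pairwise agreement you establish already applies to every pair $e',e''$ directly, and no chaining along a $1$-skeleton is needed. Your connectedness argument is not wrong, just superfluous under the stated hypothesis.
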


\begin{proof}
The lemma will be proved by downward induction on the dimension of
the faces of~$K$. We start by choosing  an orientation of 
the unique $n$-dimensional face of
$K$ arbitrarily. 

Assume that we have oriented all faces of $K$ of dimensions  $\geq k$
for some $n > k \geq 0$. Let $a$ be a \hbox{$(k\!-\!1)$} dimensional face of $K$, and
choose some $k$ dimensional face $e$ such that $a \prec e$. This is
always possible, since otherwise the face $a$ would be maximal, which
contradicts the properties of a polytope.
If $a$ occurs in $\pa (e)$ with the $+1$
sign, we equip it with the compatible orientation, if it occurs with the
$-1$ sign, we equip it with the orientation opposite to the compatible
one. We need to show that this recipe does not depend on the choice of
$e$.

Assume therefore that $e'$ and $e''$ are two faces of $K$ with the properties
described above. Let $h$ be a cell required by the diamond
property. Then 
\begin{align*}
\pa(h)& = \eta' \cdot e' +  \eta'' \cdot e'' + \hbox{other terms,}
\  \eta', \eta'' \in \{-1,+1\},
\\
\pa(e')&= \varepsilon' \cdot a +  \hbox{other terms,}
\  \varepsilon' \in \{-1,+1\},\ \hbox {and}
\\
\pa(e'')&= \varepsilon'' \cdot a +  \hbox{other terms,} \  
\varepsilon'' \in \{-1,+1\}.
\end{align*}
The condition $\pa^2(h) = 0$ together with the fact that $e'$ and $e''$
are the only faces in the interval $[a,h]$ imply 
\begin{equation}
\label{Dnes_zmena_casu.}
\eta'\varepsilon' + \eta''
\varepsilon'' =0.
\end{equation}
The configuration of the relevant cells is indicated in
Figure~\ref{S_Jarkou_u_Pakousu.} (right) which shows a section of $h$ with a
hyperplane orthogonal to $\A_a$.

Assume e.g.~that $\eta' = \eta'' = 1$. Then both $e'$ and $e''$ have
the orientation compatible with the orientation of $h$. 
By~(\ref{Dnes_zmena_casu.}) one has $\varepsilon'
= - \varepsilon''$; assume for instance that $\varepsilon' = 1,
\varepsilon'' = -1$. Then $a$ gets from $e'$ the compatible orientation,
and from $e''$ the orientation opposite to the compatible one. 
It easily follows from the local
geometry of the section in Figure~\ref{S_Jarkou_u_Pakousu.} that these
two orientations of $a$ are the same. The remaining cases can be
analyzed similarly.
\end{proof}

\subsection{Splits and collapses}
The proof of Lemma \ref{Panenka_na_mne_kouka.} below relies on the
actions of {\em splitting\/} the vertices and {\em collapsing\/} the edges
of constructs of a hypergraph ${\bf H}$. We formalize the
corresponding constructions below. Let $C:{\bf H}$.
\paragraph{\em Splitting the vertices of $C$.} Let
$V\in \mbox{vert}(C)$ be such that $|V|\geq 2$. Let ${\bf H}\!-\!V$ be the
hypergraph defined by
$${\bf H}\!-\!V:=\{X\backslash V\,|\, X\in {\it Sat}({\bf
  H})\}\backslash\{\emptyset\}.$$ Observe that, in general,
${\bf H}\!-\!V\neq {\bf H}\backslash V$. For example, for the hypergraph
${\bf H}_{\Gamma}$ from Example~\ref{ex1}, we have that
${\bf H}_{\Gamma}\!-\!\{x,y\}$ is the complete graph on the vertex set
$\{z,u,v\}$, whereas ${\bf H}_{\Gamma}\backslash\{x,y\}$ can be
obtained from ${\bf H}_{\Gamma}\!-\!\{x,y\}$ by removing the edge
$\{u,z\}$ and, hence, is a linear graph. Let $\{X,Y\}$ be a partition
of $V$ such that the tree $X\{Y\}$ is a construct of ${\bf H}\!-\!V$.  We
define the construct $C[X\{Y\}/X\cup Y]:{\bf H}$, obtained from C by
splitting the vertex $V$ into the edge $X\{Y\}$, by induction on the
number of vertices of $C$, as follows. If $C=H$, we set
$C[X\{Y\}/X\cup Y]:=X\{Y\}$. 

Suppose that, for $Z\subset H$,
$C=Z\{C_1,\dots,C_p\}$,
${\bf H}\backslash Z\leadsto {\bf H}_{1},\dots, {\bf H}_{p}$ and
$C_i:{\bf H}_{i}$.  If there exists an index $i$, $1\leq i\leq p$, such
that $V\in \mbox{vert}(C_i)$, we define
$$C[X\{Y\}/X\cup Y]:=Z\{C_1,\dots,C_{i-1},C_i[X\{Y\}/X\cup Y]
,C_{i+1},\dots,C_p\}.$$ 
Assume that $V=Z$ and let
$\{i_1,\dots,i_q\}\cup \{j_1,\dots,j_r\}$ be the partition of the set
$\{1,\dots,p\}$ such that the hypergraphs ${\bf H}_{i_s}$, for
$1\leq s\leq q$, contain a vertex adjacent to some vertex of~$Y$,
while the hypergraphs ${\bf H}_{i_t}$, for $1\leq t\leq q$, have no
vertices adjacent to a vertex of~$Y$.  We define
$$C[X\{Y\}/X\cup Y]:=X\{Y\{C_{i_1},\dots
C_{i_q}\},C_{j_1},\dots,C_{j_r}\}.$$ If, exceptionally,
$\{i_1,\dots,i_q\}=\emptyset$ resp.\ $\{j_1,\dots,j_r\}=\emptyset$,
we set 
\[
C[X\{Y\}/X\cup Y]:=X\{Y,C_1,\dots,C_p\}
\hbox { resp.} \ C[X\{Y\}/X\cup Y]:=X\{Y\{C_1,\dots,C_p\}\}.
\]

The proof that the non-planar rooted tree $C[X\{Y\}/X\cup Y]$ is
indeed a construct of ${\bf H}$ goes easily by induction on the number
of vertices of $C$, the only interesting case being 
$C=Z\{C_1,\dots,C_p\}$. In that
case, the argument is based on the fact that the set of vertices
$Y\cup \bigcup_{i\in \{i_1,\dots,i_q\}}\mbox{vert}({\bf H}_i)$
determines a connected component ${\bf H}'$ of ${\bf H}$ and,
furthermore, that $Y\{C_{i_1},\dots C_{i_q}\}:{\bf H}'$.

\paragraph{\em Collapsing the edges of $C$.} One can similarly define the construct \hbox{$C[X\cup Y/X\{Y\}]\!:\!{\bf H}$}, obtained from C by collapsing the edge $X\{Y\}$ into the vertex $X\cup Y$. 
\begin{lemma}
\label{Panenka_na_mne_kouka.}
The polytope ${\mathcal C}(\bfH, \Pi)$ that realizes the 
abstract polytope ${\mathcal A}(\H)$ (see Lemma~\ref{abspol}) of a
hypergraph $\H$   satisfies the
diamond property.
\end{lemma}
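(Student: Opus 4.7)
The plan is to transport the diamond property from the polytope $\mathcal{C}(\mathbf{H},\Pi)$ to the combinatorially transparent poset $\mathcal{A}(\mathbf{H})$ of constructs via Lemma~\ref{abspol}, and then verify it directly: two distinct one-edge tree contractions of a construct always commute to a common two-edge contraction.

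By Lemma~\ref{abspol}, the face lattice of $\mathcal{C}(\mathbf{H},\Pi)$ is order-isomorphic to $\mathcal{A}(\mathbf{H})$, and the dimension of a face equals $|H|-|\mathrm{vert}(C)|$, where $C$ is the corresponding construct. In particular, cover relations in the face lattice correspond to cover relations in $\mathcal{A}(\mathbf{H})$, and the latter are exactly the vertex-count-decreasing steps. Inspecting the generators of $\leq_{\mathbf{H}}$ shows that such covers are precisely single-edge contractions in the underlying tree: $C \lessdot C'$ iff $C'$ is obtained from $C$ by collapsing a single internal edge, i.e.\ by replacing some subconfiguration $Y\{X\{C_{11},\ldots,C_{1m}\},C_2,\ldots,C_n\}$ by $(Y\cup X)\{C_{11},\ldots,C_{1m},C_2,\ldots,C_n\}$.

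Now suppose $a$ is a $(k-1)$-dimensional face of $\mathcal{C}(\mathbf{H},\Pi)$ with two distinct $k$-dimensional cofaces $e', e''$. Under the isomorphism of Lemma~\ref{abspol} we obtain constructs $C_a \lessdot C_{e'}$ and $C_a \lessdot C_{e''}$, both produced from $C_a$ by contracting a single tree edge, say $\epsilon'$ and $\epsilon''$ respectively. Because $e' \neq e''$ forces $C_{e'} \neq C_{e''}$, these two edges are distinct. Let $C_h$ be the construct produced by contracting both $\epsilon'$ and $\epsilon''$ in $C_a$. Since the two contractions operate on different edges of the same finite tree, they commute, so $C_h$ is well-defined, satisfies $|\mathrm{vert}(C_h)|=|\mathrm{vert}(C_a)|-2$, and $C_{e'}, C_{e''} \lessdot C_h$. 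The corresponding $(k+1)$-dimensional face $h$ of $\mathcal{C}(\mathbf{H},\Pi)$ then has both $e'$ and $e''$ in its boundary, which is precisely the diamond condition.

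The only non-formal step is the commutativity of two elementary contractions applied at distinct edges $\epsilon',\epsilon''$ of the tree underlying $C_a$, which is the main thing to check. If $\epsilon'$ and $\epsilon''$ share no common endpoint they act on disjoint subtrees and the result is independent of the order. If they do share an endpoint, the relevant local picture is $Z\{X\{Y\{\ldots\},\ldots\},\ldots\}$ with $\epsilon'$ joining $Z$ to $X$ and $\epsilon''$ joining $X$ to $Y$; direct substitution using the edge-contraction rule twice yields, in either order, the tree with top vertex decorated by $Z\cup X\cup Y$ carrying the combined subtrees, which is again a construct of $\mathbf{H}$. Once this short combinatorial verification is in place, the diamond property follows.
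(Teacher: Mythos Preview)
Your approach is essentially the same as the paper's: translate via Lemma~\ref{abspol} to the poset of constructs, identify covers with single edge contractions, and show that two distinct contractions commute to a common upper bound. However, your case analysis is incomplete.

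When the two edges $\epsilon',\epsilon''$ of $C_a$ share a common endpoint, you treat only the \emph{path} configuration $Z\{X\{Y\{\ldots\},\ldots\},\ldots\}$, where one edge sits above the other. You omit the \emph{fork} configuration, in which the shared vertex is the parent of both edges: locally $X\{Y\{\ldots\},V\{\ldots\},\ldots\}$ with $\epsilon'$ the edge $X$--$Y$ and $\epsilon''$ the edge $X$--$V$. Here contracting $\epsilon'$ yields a vertex $X\cup Y$ with child $V$, contracting $\epsilon''$ yields $X\cup V$ with child $Y$, and one must check that a further contraction on each side gives the common construct with vertex $X\cup Y\cup V$. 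This is precisely the paper's second case ($X=U$, $Y\cap V=\emptyset$); it is easy but not covered by your ``$Z\{X\{Y\{\ldots\}\}\}$'' picture. A secondary imprecision: in your disjoint-endpoint case the phrase ``act on disjoint subtrees'' is not literally correct when one edge lies strictly above the other without touching it, though the contractions still commute. Once the fork case is added and this wording adjusted, your argument matches the paper's.
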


\begin{proof}
We prove the lemma  by constructing, for each construct $C:{\bf H}$ of rank $k-1$ for which there exist constructs $C'$ and $C''$ of rank $k$ such that 
\begin{equation}\label{joca}
C\leq_{{\bf H}} C'\quad \mbox{and} \quad C\leq_{{\bf H}} C'',
\end{equation}
  a construct $D:{\bf H}$ of rank $k+1$ such that $C'\leq_{{\bf H}} D$ and $C''\leq_{{\bf H}} D$.

By definition of the partial order $\leq_{{\bf H}}$ of ${\mathcal
  A}({\bf H})$, the relations \eqref{joca}, together with the fact
that the rank of $C$ differs by $1$ from the rank of $C'$ and $C''$,
mean that there exists a vertex $X\cup Y$ of $C'$ and a vertex $U\cup
V$ of $C''$, such that 
\[
C=C'[X\{Y\}/X\cup Y]=C''[U\{V\}/U\cup V].
\] 
As vertices of $C$, the sets $X$, $Y$, $U$ and $V$ satisfy one of the
following relations: 
they can either
be mutually disjoint, or it can be the case that $X=U$ and
$Y\cap V=\emptyset$, or it can be the case that $Y=U$ and
$X\cap V=\emptyset$, plus the `mirror' reflection of the last case, namely $X=V$
and $U\cap V = \emptyset$. 

It is easily seen that other possible relations
are forbidden. For example, the relation $Y=V$ would imply that $C$ is
not a rooted tree. Depending on the mutual relation of the vertices
$X$, $Y$, $U$ and $V$ of $C$, the above equality implies that the
action of collapsing a~particular edge of $C'$ and a~particular edge
of $C''$ leads to the same construct. Indeed, if $X$, $Y$, $U$ and $V$
are mutually disjoint, then
$$C'[U\cup V/U\{V\}]=C''[X\cup Y/X\{Y\}],$$ if $X=U$ and
$Y\cap V=\emptyset$, then
$$C'[(X\cup Y)\cup V/(X\cup Y)\{V\}]=C''[(X\cup V)\cup Y/(X\cup
V)\{Y\}],$$ and if $Y=U$ and $X\cap V=\emptyset$, then
$$C'[(X\cup Y)\cup V/(X\cup Y)\{V\}]=C''[X\cup (Y\cup V)/X\{Y\cup
V\}].$$ 

We define $D$ to be precisely the construct obtained from $C'$
(or, equivalently, from $C''$) by such a collapse. The three diamonds
corresponding to the three possible constructions of $D$ can be
pictured respectively as follows:
\begin{center}
\raisebox{0.32cm}{\begin{tikzpicture}
\node (r) [rectangle,draw=none] at (0.7,-0.5) {\resizebox{!}{0.85cm}{\begin{tikzpicture}
\node(k) at (-0.5,0) {};
\node (b) [circle,fill=cyan,draw=black,minimum size=0.1cm,inner sep=0.2mm,label={[xshift=-0.65cm,yshift=-0.35cm]{\footnotesize $X\cup Y$}}] at (0,0) {};
\node (b1) [circle,fill=cyan,draw=black,minimum size=0.1cm,inner sep=0.2mm,label={[xshift=0.3cm,yshift=-0.35cm]{\footnotesize $V$}}] at (0.4,0) {};
 \node (d1) [circle,fill=cyan,draw=black,minimum size=0.1cm,inner sep=0.2mm,label={[xshift=0.3cm,yshift=-0.33cm]{\footnotesize $U$}}] at (0.4,0.35) {};\draw[thick] (b1)--(d1);\end{tikzpicture}}};
\node (k1)[rectangle,draw=none] at (2,-2.4) {\resizebox{!}{0.85cm}{\begin{tikzpicture}
\node(k) at (-0.5,0) {};
\node (b) [circle,fill=cyan,draw=black,minimum size=0.1cm,inner sep=0.2mm,label={[xshift=-0.3cm,yshift=-0.35cm]{\footnotesize $Y$}}] at (0,0) {};
 \node (d) [circle,fill=cyan,draw=black,minimum size=0.1cm,inner sep=0.2mm,label={[xshift=-0.3cm,yshift=-0.33cm]{\footnotesize $X$}}] at (0,0.35) {};
\node (b1) [circle,fill=cyan,draw=black,minimum size=0.1cm,inner sep=0.2mm,label={[xshift=0.3cm,yshift=-0.35cm]{\footnotesize $V$}}] at (0.4,0) {};
 \node (d1) [circle,fill=cyan,draw=black,minimum size=0.1cm,inner sep=0.2mm,label={[xshift=0.3cm,yshift=-0.33cm]{\footnotesize $U$}}] at (0.4,0.35) {};
 \draw[thick] (b)--(d);\draw[thick] (b1)--(d1);\end{tikzpicture}}};
  \node (jo) at  (2,1.15)  {\resizebox{!}{0.5cm}{\begin{tikzpicture}
\node(k) at (-0.5,0) {};
\node (b) [circle,fill=cyan,draw=black,minimum size=0.1cm,inner sep=0.2mm,label={[xshift=-0.65cm,yshift=-0.35cm]{\footnotesize $X\cup Y$}}] at (0,0) {};
\node (b1) [circle,fill=cyan,draw=black,minimum size=0.1cm,inner sep=0.2mm,label={[xshift=0.65cm,yshift=-0.35cm]{\footnotesize $U\cup V$}}] at (0.4,0) {};
\end{tikzpicture}}};
\node (mq1) [rectangle,draw=none] at (3.3,-0.5) {\resizebox{!}{0.85cm}{\begin{tikzpicture}
\node(k) at (-0.5,0) {};
\node (b) [circle,fill=cyan,draw=black,minimum size=0.1cm,inner sep=0.2mm,label={[xshift=-0.3cm,yshift=-0.35cm]{\footnotesize $Y$}}] at (0,0) {};
 \node (d) [circle,fill=cyan,draw=black,minimum size=0.1cm,inner sep=0.2mm,label={[xshift=-0.3cm,yshift=-0.33cm]{\footnotesize $X$}}] at (0,0.35) {};
\node (b1) [circle,fill=cyan,draw=black,minimum size=0.1cm,inner sep=0.2mm,label={[xshift=0.65cm,yshift=-0.35cm]{\footnotesize $U\cup V$}}] at (0.4,0) {};
 \draw[thick] (b)--(d);\end{tikzpicture}}};
\draw[left hook-latex] (k1)--(r);
\draw[right hook-latex] (k1)--(mq1);
\draw[right hook-latex](mq1)--(jo);
\draw[left hook-latex] (r)--(jo);
\end{tikzpicture}} \quad\quad\quad
\raisebox{0.145cm}{\begin{tikzpicture}
\node (r) [rectangle,draw=none] at (0.8,-0.5) {\resizebox{!}{0.85cm}{\begin{tikzpicture}
\node(k) at (0.6,0) {};
\node (b) [circle,fill=cyan,draw=black,minimum size=0.1cm,inner sep=0.2mm,label={[xshift=0.65cm,yshift=-0.35cm]{\footnotesize $X\cup Y$}}] at (0,0.35) {};
 \node (d) [circle,fill=cyan,draw=black,minimum size=0.1cm,inner sep=0.2mm,label={[xshift=0.3cm,yshift=-0.33cm]{\footnotesize $V$}}] at (0,0) {};
 \draw[thick] (b)--(d);\end{tikzpicture}}};
\node (k1)[rectangle,draw=none] at (2,-2.475) {\resizebox{1.45cm}{!}{\begin{tikzpicture}
\node (b) [circle,fill=cyan,draw=black,minimum size=0.1cm,inner sep=0.2mm,label={[yshift=0cm]{\footnotesize $X$}}] at (4,0.15) {};
\node (c) [circle,fill=cyan,draw=black,minimum size=0.1cm,inner sep=0.2mm,label={[xshift=-0.3cm,yshift=-0.35cm]{\footnotesize $Y$}}] at (3.8,-0.2) {};
 \node (d) [circle,fill=cyan,draw=black,minimum size=0.1cm,inner sep=0.2mm,label={[xshift=0.3cm,yshift=-0.35cm]{\footnotesize $V$}}] at (4.2,-0.2) {};
 \draw[thick]  (c)--(b)--(d);\end{tikzpicture}}};
  \node (jo) at  (2,1.15)  {\resizebox{2.65cm}{!}{\begin{tikzpicture}
\node (aa) at (-0.75,-0.1) {};
 \node (d) [circle,fill=cyan,draw=black,minimum size=0.1cm,inner sep=0.2mm,label={[xshift=0.95cm,yshift=-0.35cm]{\footnotesize $X\cup Y\cup V$}}] at (0,0) {};\end{tikzpicture}}};
\node (mq1) [rectangle,draw=none] at (3.2,-0.5) {\resizebox{!}{0.85cm}{\begin{tikzpicture}
\node(k) at (-0.6,0) {};
\node (b) [circle,fill=cyan,draw=black,minimum size=0.1cm,inner sep=0.2mm,label={[xshift=-0.65cm,yshift=-0.35cm]{\footnotesize $X\cup V$}}] at (0,0.35) {};
 \node (d) [circle,fill=cyan,draw=black,minimum size=0.1cm,inner sep=0.2mm,label={[xshift=-0.3cm,yshift=-0.33cm]{\footnotesize $Y$}}] at (0,0) {};
 \draw[thick] (b)--(d);\end{tikzpicture}}};
\draw[left hook-latex] (k1)--(r);
\draw[right hook-latex] (k1)--(mq1);
\draw[right hook-latex](mq1)--(jo);
\draw[left hook-latex] (r)--(jo);
\end{tikzpicture}}
\quad\quad\quad
\begin{tikzpicture}
\node (r) [rectangle,draw=none] at (0.8,-0.5) {\resizebox{!}{0.85cm}{\begin{tikzpicture}
\node(k) at (0.6,0) {};
\node (b) [circle,fill=cyan,draw=black,minimum size=0.1cm,inner sep=0.2mm,label={[xshift=0.65cm,yshift=-0.35cm]{\footnotesize $X\cup Y$}}] at (0,0.35) {};
 \node (d) [circle,fill=cyan,draw=black,minimum size=0.1cm,inner sep=0.2mm,label={[xshift=0.3cm,yshift=-0.33cm]{\footnotesize $V$}}] at (0,0) {};
 \draw[thick] (b)--(d);\end{tikzpicture}}};
\node (k1)[rectangle,draw=none] at (2,-2.55) {\resizebox{!}{1.2cm}{\begin{tikzpicture}
\node(k) at (-0.5,0) {};
\node (b) [circle,fill=cyan,draw=black,minimum size=0.1cm,inner sep=0.2mm,label={[xshift=0.3cm,yshift=-0.35cm]{\footnotesize $V$}}] at (0,0) {};
\node (c) [circle,fill=cyan,draw=black,minimum size=0.1cm,inner sep=0.2mm,label={[xshift=0.3cm,yshift=-0.29cm]{\footnotesize $X$}}] at (0,0.7) {};
 \node (d) [circle,fill=cyan,draw=black,minimum size=0.1cm,inner sep=0.2mm,label={[xshift=0.3cm,yshift=-0.33cm]{\footnotesize $Y$}}] at (0,0.35) {};
 \draw[thick] (b)--(d)--(c);\end{tikzpicture}}};
  \node (jo) at  (2,1.15)  {\resizebox{2.65cm}{!}{\begin{tikzpicture}
\node (aa) at (-0.75,-0.1) {};
 \node (d) [circle,fill=cyan,draw=black,minimum size=0.1cm,inner sep=0.2mm,label={[xshift=0.95cm,yshift=-0.35cm]{\footnotesize $X\cup Y\cup V$}}] at (0,0) {};\end{tikzpicture}}};
\node (mq1) [rectangle,draw=none] at (3.2,-0.5) {\resizebox{!}{0.85cm}{\begin{tikzpicture}
\node(k) at (-0.6,0) {};
\node (b) [circle,fill=cyan,draw=black,minimum size=0.1cm,inner sep=0.2mm,label={[xshift=-0.3cm,yshift=-0.35cm]{\footnotesize $X$}}] at (0,0.35) {};
 \node (d) [circle,fill=cyan,draw=black,minimum size=0.1cm,inner sep=0.2mm,label={[xshift=-0.65cm,yshift=-0.33cm]{\footnotesize $Y\cup V$}}] at (0,0) {};
 \draw[thick] (b)--(d);\end{tikzpicture}}};
\draw[left hook-latex] (k1)--(r);
\draw[right hook-latex] (k1)--(mq1);
\draw[right hook-latex](mq1)--(jo);
\draw[left hook-latex] (r)--(jo);
\end{tikzpicture}
\end{center}
where we only display the edges involved in the construction. By
definition, the construct $D$ satisfies the required properties.
\end{proof}

\subsection{Proof of Theorem~\ref{Woy-Woy}.}
\label{Posledni tyden.}
We establish first that $\minGrc$ is acyclic in positive
dimensions and that $H_0(\minGrc) \cong \bbk$.
By Proposition~\ref{jovanica}, each construct 
$C : \H_\Gamma$ is, for $\Gamma \in \Grc$ with at
least one internal edge, 
of the form $\alpha_\Gamma(T)$ for some graph-tree $T \in
\gTr(\Gamma)$. It is therefore
supported by a rooted planar 
tree, so we may introduce the lexicographic arrangement $\varsigma_{\,\lex}$ of
levels of its underlying tree. 
Consequently we get from~(\ref{Free constructs.}) an analog
\begin{equation*}
\Free(E)(\GAmma) \cong \bigoplus_{C : \H_\GAmma} \ 
E(C,\varsigma_{\,\lex})
\end{equation*}
of formula~(\ref{Prvni_na_Vivat_tour.}).

The case which interests us is when 
$E$ is the collection $D$ in~(\ref{Flicek}) generating $\minGrc$.
A~vertex $v$ of $C$ is decorated by a subset $X_v \subset \Edg(\Gamma)$,
thus it contributes to $D(C,\varsigma_{\,\lex})$ by the multiplicative 
factor $\det(X_v)$. 
Let us fix an order of $\Edg(\Gamma)$. Then each
$X_v$ bears an induced order, hence $\det(X_v)$
has a preferred basis element
\[
x_1\land \cdots\land x_r \in \det(X_v),\ x_1< \cdots< x_r,\ X_v = \{\Rada x1r\},
\]
so it is canonically isomorphic to $\bfk$
placed, according to our conventions, 
in degree $|X_v| - 1$. Combining the above facts, we
arrive at the canonical isomorphism
\begin{equation}
\label{Flicek_a_Misa_s_motylkem}
\Free(D)(\GAmma) \cong \bigoplus_{C : \H_\GAmma} \Span(\{e_C\}), 
\end{equation}
where $\Span(\{e_C\})$ is the vector space spanned by a generator $e_C$
placed in degree that equals the rank of $C$, which in this case
equals $|\Edg(\Gamma)|-|\Vert(C)|$.

The differential $\pa$ of the minimal model transfers, via
isomorphism~(\ref{Flicek_a_Misa_s_motylkem}), into a differential
denoted by the same symbol of the graded vector space at the right hand side
of~(\ref{Flicek_a_Misa_s_motylkem}). 
It is straightforward to verify that the transferred differential has
the form required by 
Lemma~\ref{Jeste_ani_nevim_kde_budu_v_Melbourne_bydlet.},~i.e.\ 
\begin{equation}
\label{Uz_jedou_fukary.}
\pa(e_C) = \sum \eta^F_C \cdot e_F,
\end{equation}
where $\eta_F \in \{-1,+1\}$ and $F$ runs over all $F : \H_\Gamma$ 
such that $\grad(F) = \grad(C)-1$.

\begin{remark}
It is possible to establish the explicit values of the coefficients
$\eta_C^F$ in~(\ref{Uz_jedou_fukary.}), but the ingenuity of
  Lemma~\ref{Jeste_ani_nevim_kde_budu_v_Melbourne_bydlet.} makes it
  unnecessary.  
\end{remark}

Now we invoke that the 
poset  ${\mathcal A}({\H_\Gamma})$ 
of constructs  of $\H_\Gamma$ is, by Lemma~\ref{abspol}, the
poset of faces of a convex polytope $K$
which moreover fulfills the diamond
property by Lemma~\ref{Panenka_na_mne_kouka.}.
By Lemma~\ref{Jeste_ani_nevim_kde_budu_v_Melbourne_bydlet.}, the cells
of $K$ can be oriented so that 
\[
\left( \bigoplus_{C : \H_\GAmma} 
\Span(\{e_C\}),\pa \right)
\]
is the cell complex $C_*(K)$. It is thus
acyclic in positive dimension, and so is $(\Free(D)(\GAmma),\pa) = \minGrc(\Gamma)$, for each
$\Gamma\in \Grc$.
By the same reasoning, 
\begin{equation}
\label{Lisegacev1}
H_0(\minGrc)(\Gamma) \cong \bbk\ \hbox { for each $\Gamma \in \Grc$.}
\end{equation}

The next step is to prove that the operad morphism $\rho : \Free(D)
\to \termGrc$ commutes with the differentials, which clearly amounts to proving
that $\rho(\pa x) = 0$ for each degree $1$ element $\mu\in
\Free(D)(\Gamma)_1$. By the derivation property of $\pa$, it is in
fact enough to address only the case when $\mu$ is a generator of degree $1$,
i.e.\ an element of $D(\Gamma) = \det(\Edg(\Gamma))$ with $\Gamma$
having exactly two internal edges.

Let thus $\Gamma$ be such a graph and $a,b$ its two internal
vertices. There are precisely two graph-trees $T',T''\in
\gTr^2(\Gamma)$, both with two vertices and one internal edge.
The root vertex of $T'$ is decorated by some graph $\Gamma_v'$ with the only
internal edge $a$, and the other vertex of $T'$ by $\Gamma_u'$ with
the only internal edge $b$. The graph-tree $T''$ has similar decorations
$\Gamma_v''$ and $\Gamma_u''$, but this time $\Edg(\Gamma_v'') =
\{b\}$ and $\Edg(\Gamma_u'') = \{a\}$.
For a generator 
$\mu := a \land b \in D(\Gamma) = \det(\{a,b\})$ 
formula~(\ref{Flicek_na_mne_kouka.}) gives
\[
  \pa( a \land b) = a \ot b - b \ot a \in 
(D(\Gamma'_v) \ot D(\Gamma'_u)) \oplus( D(\Gamma''_v) \ot D(\Gamma''_u)) 
\subset \Free^2(D)(\Gamma).
\]
By the definition~(\ref{Obehnu_to_dnes?}) of the morphism $\rho$,
\[
\rho(\pa( a \land b)) = \rho( a \ot b - b \ot a) = 1\cdot 1 - 1\cdot 1 =0
\]
as required.

The last issue that has to be established is that $\rho$ induces an
isomorphism
\[
H_0(\rho) : H_0(\minGrc) \stackrel\cong\longrightarrow \termGrc.
\]
To this end, in view of~(\ref{Lisegacev1}), it is enough to prove that
\[
  H_0(\rho)(\Gamma) : H_0(\minGrc)(\Gamma)
  \longrightarrow \termGrc(\Gamma) = \bbk 
\] 
is nonzero for each $\Gamma\in \Grc$.
Equation~(\ref{Athalia}) readily gives 
\begin{equation}
\label{Jeptha}
\Free(D)(\GAmma)_0 \cong 
\bigoplus_{T \in \gTr_0(\GAmma)} \  \colim{\lambda \in
  \Lev(T)} D(T,\lambda),
\end{equation}
in which $\gTr_0(\GAmma)$ is the subset of
$\gTr(\GAmma)$ consisting of graph-trees for which each decorating graph
$\Gamma_v$, $v \in \Vert(\Gamma)$, has exactly one internal edge. For
such a graph, $D(\Gamma_v) = \det(\Edg(\Gamma_v))$ is canonically
isomorphic to $\bfk$ placed in degree $0$.
The groupoid $\Lev(T)$ therefore acts trivially on $D(T,\lambda)$ which
is canonically isomorphic to $\bfk$, so~(\ref{Jeptha}) leads to
\begin{equation}
\label{Lisegacev}
\Free(D)(\GAmma)_0 \cong \Span(\gTr_0(\GAmma)),
\end{equation}
in which 
each $T \in \gTr_0(\GAmma)$ corresponds to a vertex of the
polytope $K$ associated to ${\mathcal A}(\H_\Gamma)$ and therefore
represents a cycle that linearly generates $H_0(\minGrc)$.
We will show that \hbox{$\rho(T) \not= 0$}. 

Under isomorphism~(\ref{Lisegacev}), each
$T$ is an operadic composition of graph trees in $\gTr_0^1(\GAmma)$,
i.e.~graph trees whose underlying tree has one vertex which is decorated by a
graph with one internal edge. By~(\ref{Obehnu_to_dnes?}), $\rho(S) = 1
\in \bbk$ for  $S\in \gTr_0^1(\GAmma)$. Since all operadic compositions
in $\termGrc$ are the identities  $\id : \bbk \ot
\bbk \to \bbk$, $\rho(T) = 1$ for the composite $T$ as well.
This finishes the proof of Theorem~\ref{Woy-Woy}.

\section{Other cases}
\label{Mourek a Terezka}

\begin{figure}
\[
\xymatrix@R=1em{&\RTr\ar[dr] &&& \ar[d]\ggGrc&&
\\
\PRTr   \ar[ur]\ar[dr] &&\ar[rr] \Tr && \ar[rr]\Grc && \Gr
\\
&\PTr\ar[ur] &&&\ar[u]\Whe&\Dio\ar[l]&\, \Dio_3 \ar@{_{(}->}[l]&\hGr\ar[l]
}
\]
\caption{\label{V utery jdu na endo.}%
  Relations between graph-related operadic categories,
  see Section~4 of~\cite{SydneyI} for the notation. All arrows are
  discrete operadic fibrations except for $\ggGrc \to \Grc$ and
  $\Whe\to \Grc$ which are discrete operadic opfibrations.}
\end{figure}
As the diagram in Figure~\ref{V utery jdu na endo.} taken 
from~\cite{SydneyI} teaches us, many operadic
categories of interest are obtained from the basic category $\Grc$ of
ordered connected graphs by iterated discrete operadic fibrations or
opfibrations. This is in particular true for the category $\ggGrc$ of
genus-graded graphs, the category $\Tr$ of trees, 
and the category $\Whe$ of wheeled graphs; they all are
discrete operadic opfibrations over $\Grc$. 
Moreover, the inclusion $\RTr \hookrightarrow \Grc$ of the
operadic category of rooted trees is a discrete operadic fibration
with finite fibers.   
Corollary~\ref{grantova_zprava} of~Subsection~\ref{Dnes_hori_bus.}
below states that the
restrictions along discrete operadic opfibrations  or fibrations with
finite fibers preserve minimal
models of the terminal operads. 
Therefore the minimal models of the terminal operads in the above
mentioned categories are suitable restrictions of the minimal model
$\minGrc$ of the terminal $\Grc$-operad constructed in 
Section~\ref{hadrova_panenka}. We close this section by describing
the minimal model of the terminal operad in the category $\SRTr$ of
strongly rooted~trees.

\subsection{Operadic (op)fibrations and minimal models}
\label{Dnes_hori_bus.} 
The following material uses the terminology of~\cite{SydneyI,SydneyII}.
All operadic categories in this subsection will be
factorizable, graded, and such that  all quasibijections are invertible,
the blow up and unique fiber axioms are fulfilled, and a~morphism is an
isomorphisms if it is of grade $0$. These assumptions are fulfilled by
all operadic categories discussed in the present paper.

Assume that $\ttO$ is such an operadic category. As argued
in~\cite[Section~3]{SydneyII}, one has the natural forgetful functor 
$\zap_\ttO : \OperV\ttO \to\CollectV\ttO$  from the
category of $1$-connected strictly
unital Markl's $\ttO$-operads with values in a symmetric monoidal
category $\ttV$ to the category of $1$-connected
$\ttO$-collections in $\ttV$. Its left adjoint $\Free_\ttO : \CollectV\ttO \to
\OperV\ttO$ is the free operad functor.

Each strict operadic functor $p : \ttO \to \ttP$ induces the
restriction $p^*: \OperV\ttP \to \OperV\ttO$ acting on objects by
the formula
\begin{equation}
\label{Zitra mam treti prednasku.}
p^*(\oP)(t) := \oP(p(t)),  \ \oP \in  \OperV\ttP , \ t \in \ttO.
\end{equation}
The restriction $p^*$ may or may not have a right adjoint
$p_*: \OperV\ttO \to \OperV\ttP$ and even if if it exists
its form  may not be simple unless $p$ has some special properties.

Recall the following general categorical definition. 
Assume we are given a commutative diagram  of right adjoints
\begin{equation}
\label{lrbc}
\xymatrix@C=3em@R=1.5em{\ttA
 \ar[dd]^{u^*} 
&  & 
\ar[dd]_{v^*}\ttB \ar[ll]_{p^*}
\\
 & &
\\
\ttC
 &  & \ttD
\ar[ll]_{q^*}
}
\end{equation}
in which $p^*$ and $q^*$ are also left adjoints.
These functors can be organized into the following diagram of adjunctions
\begin{equation}
\label{vseadj}
\xymatrix@C=4em{\ttA\ar@/_1.5em/[rr]_{p_*} \ar@/^1.5em/[rr]^{p_!}\ar@/^1em/[dd]^{u^*} &  & 
\ar@/_1em/[dd]_{v^*}\ttB \ar[ll]_{p^* \hspace{0.5mm} \perp}^{\perp} \\
\dashv & &\vdash
\\
\ar@/^1em/[uu]^{u_!}
\ttC\ar@/_1.5em/[rr]_{q_*} \ar@/^1.5em/[rr]^{q_!}& & \ttD
\ar[ll]_{q^* \hspace{0.5mm} \perp}^{\perp}
\ar@/_1em/[uu]_{v_!}
}
\end{equation}
The square (\ref{lrbc})
is called {\it  right Beck-Chevalley square} if the following composite $$u_! q^*  \to u_! q^* v^* v_! =
u_! u^* p^* v_! \to p^* v_!$$ is an isomorphism.
Symmetrically, (\ref{lrbc})
is {\it a left Beck-Chevalley square}  if the composite
$$ q_!u^*  \to  q_! u^*p^* p_! =
q_! q^* v^* p_! \to  v^*p_!$$
is an isomorphism, cf.~\cite{Malt}.

\begin{lemma}
\label{left and right BC}
The  following two conditions are equivalent:
\begin{itemize}
\item [(i)]
 the mate $q_* u^* \leftarrow q_* u^* p^*p_* 
= q_* q^* v^* p_* \leftarrow 
v^* p_*$ is an isomorphism and
\item [(ii)]
the square (\ref{lrbc})
is a right Beck-Chevalley square. 
\end{itemize}
If $p_!$ is also a right adjoint to $p^*$ (that is, $p_!\cong p_*$) and
$q_!$ is a right adjoint to $q^*$ then (\ref{lrbc}) is a right
Beck-Chevalley square if and only if it is a left Beck-Chevalley
square.
\end{lemma}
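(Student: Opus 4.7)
The equivalence (i)$\Leftrightarrow$(ii) will be proved by a Yoneda-plus-adjunction argument. For each pair $Z\in\ttD$, $X\in\ttA$, the four adjunctions $v_!\dashv v^*$, $p^*\dashv p_*$, $q^*\dashv q_*$, $u_!\dashv u^*$ yield natural bijections
\begin{align*}
\mathrm{Hom}_\ttD(Z,v^*p_*X)&\;\cong\;\mathrm{Hom}_\ttB(v_!Z,p_*X)\;\cong\;\mathrm{Hom}_\ttA(p^*v_!Z,X),\\
\mathrm{Hom}_\ttD(Z,q_*u^*X)&\;\cong\;\mathrm{Hom}_\ttC(q^*Z,u^*X)\;\cong\;\mathrm{Hom}_\ttA(u_!q^*Z,X).
\end{align*}
I will verify, by unpacking the definitions of $\gamma$ and $\delta$ as mate composites of units and counits and applying the triangle identities, that the hom-set map $\mathrm{Hom}_\ttD(Z,\delta_X)$ is carried by these bijections to $\mathrm{Hom}_\ttA(\gamma_Z,X)$. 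The Yoneda lemma then immediately delivers: $\delta_X$ is an isomorphism for every $X$ iff $\gamma_Z$ is an isomorphism for every $Z$, which is exactly (i)$\Leftrightarrow$(ii).

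For the second assertion, assume $p_!\cong p_*$ and $q_!\cong q_*$. By what has just been proved, the right Beck-Chevalley condition is equivalent to $\delta:v^*p_*\to q_*u^*$ being an iso, i.e., using the ambidextrous identifications, to $v^*p_!\to q_!u^*$ being an iso. I will then rerun the same Yoneda-plus-adjunction argument for the left Beck-Chevalley morphism $\tilde\gamma:q_!u^*\to v^*p_!$, using the adjunctions $v_!\dashv v^*$, $p^*\dashv p_!$, $q^*\dashv q_!$, $u_!\dashv u^*$ (the middle two now furnished by the ambidexterity hypothesis). This will produce a natural transformation $\alpha:p^*v_!\to u_!q^*$ with the property that $\tilde\gamma$ is an iso iff $\alpha$ is an iso.

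Finally, a diagram chase using the triangle identities of both pairs of adjunctions ($p_!\dashv p^*$ together with $p^*\dashv p_!$, and $q_!\dashv q^*$ together with $q^*\dashv q_!$) and the commutativity $u^*p^*=q^*v^*$ of~(\ref{lrbc}) will be used to show that $\alpha\circ\gamma=\mathrm{id}_{u_!q^*}$ and $\gamma\circ\alpha=\mathrm{id}_{p^*v_!}$; in other words, $\alpha$ and $\gamma$ are two-sided inverse natural transformations. Chaining all the equivalences yields right BC iff $\gamma$ iso iff $\alpha$ iso iff left BC, as required. The main obstacle is precisely this last diagram chase: in an ambidextrous adjunction the two pairs of units and counits are a priori independent data, and the cancellations that make $\alpha$ and $\gamma$ mutually inverse rest on a careful orchestration of the triangle identities with the naturality of the commutation square.
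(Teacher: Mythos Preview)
Your Yoneda argument for (i)$\Leftrightarrow$(ii) is correct; it makes precise the paper's terse observation that $\gamma$ and $\delta$ are mates under the composite adjunctions $p^*v_!\dashv v^*p_*$ and $u_!q^*\dashv q_*u^*$, so that one is invertible iff the other is.

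The plan for the second assertion, however, contains a genuine gap. You propose to finish by proving $\alpha\circ\gamma=\mathrm{id}_{u_!q^*}$ and $\gamma\circ\alpha=\mathrm{id}_{p^*v_!}$ via a diagram chase using only the triangle identities of the four adjunctions together with the equality $u^*p^*=q^*v^*$. This cannot succeed, for precisely the reason you yourself flag: the unit--counit pair of $p_!\dashv p^*$ and that of $p^*\dashv p_!$ are independent data, and likewise for $q$. Your $\alpha$ involves the counit $\bar\epsilon_p$ of $p^*\dashv p_!$ and the unit $\bar\eta_q$ of $q^*\dashv q_!$, while the $\tilde\gamma$ sitting inside $\alpha$ involves $\eta_p$ and $\epsilon_q$ from the \emph{other} adjunctions; no triangle identity relates $\bar\epsilon_p$ to $\eta_p$, or $\bar\eta_q$ to $\epsilon_q$. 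Concretely, in any $\bbk$-linear setting one may replace $(\bar\eta_p,\bar\epsilon_p)$ by $(c\,\bar\eta_p,\,c^{-1}\bar\epsilon_p)$ and $(\bar\eta_q,\bar\epsilon_q)$ by $(d\,\bar\eta_q,\,d^{-1}\bar\epsilon_q)$ for nonzero scalars $c,d$: every triangle identity is preserved, $\gamma$ and $\tilde\gamma$ are unchanged, but $\alpha$ is rescaled to $c^{-1}d\cdot\alpha$. Hence $\alpha\circ\gamma$ can be any nonzero scalar multiple of the identity, and no chase built from the listed ingredients can force it to equal~$\mathrm{id}$.

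The paper does not attempt this inverse identification. Its argument for the second assertion simply reapplies the principle behind the first part at the level of composites: under ambidexterity $\tilde\gamma$ is a map between right adjoints, so it is invertible iff its mate between the corresponding left adjoints is, and the paper identifies that invertibility with ``left adjoints commute'', i.e.\ with the right Beck--Chevalley condition. If you wish to salvage your approach, the honest conclusion from your Yoneda argument is only ``$\tilde\gamma$ iso $\Leftrightarrow$ $\alpha$ iso''; to connect this with ``$\gamma$ iso'' you should argue, as the paper does, via the first part rather than by trying to exhibit $\alpha$ and $\gamma$ as two-sided inverses.
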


\begin{proof}
Condition (i) just says that the right adjoints commute up to
isomorphism. It follows that the left adjoints commute up to
isomorphism as well, which is the right Beck-Chevalley condition
(ii). The converse is clearly true as well.

If $p_!$ is also a right adjoint to $p^*$ and $q_!$ is a right adjoint
to $q^*$ then obviously the left Beck-Chevalley condition is again about
commutation of right adjoints, hence their left adjoints commute and 
the right Beck-Chevalley condition holds. The inverse
implication is similar.
\end{proof}

\begin{remark} 
It was pointed to us by our anonymous referee that in
  \cite[Lemma 7.10]{ward0} an analogue of our Lemma \ref{left and
    right BC} is given under the so called `Wirthm\"uller context' for
  the six operations formalism (the existence of $p_*$ is a sufficient
  condition). The referee also asked which morphisms
  between operadic categories may induce the 'Grothendieck
  context.' The existence of such a context would provide an
  alternative condition for the preservation of minimal models by the
  restriction functor $p^*.$ We do not have an immediate answer but we
  are grateful to our referee for raising this interesting question,
  which certainly deserves further study.
\end{remark}

In the following proposition,  \hbox{$p^*: \OperV\ttP \to \OperV\ttO$} is the
restriction functor defined by~(\ref{Zitra mam treti prednasku.}) and
 $p_0^*: \CollectV\ttP \to \CollectV\ttO$ is the obvious similar
 restriction between the categories of collections.

\begin{proposition} 
\label{Koronavirus se siri.}
The square
\begin{equation}
\label{rightBC}
\xymatrix{\OperV\ttO
 \ar[dd]^{\zap_\ttO} 
&  & 
\ar[dd]_{\zap_\ttP}\OperV\ttP \ar[ll]_{p^*}
\\
 & &
\\
\CollectV\ttO
 &  & \CollectV\ttP
\ar[ll]_{p_0^*}
}
\end{equation}
is a right Beck-Chevalley square provided any of the two following
conditions hold: 
\begin{enumerate} 
\item[(i)]
$p$ is a discrete operadic opfibration
and $\ttV$ a cocomplete symmetric monoidal category; 
\item[(ii)] 
$p$ is
a discrete operadic fibration with finite fibers and $\ttV$ an
additive cocomplete symmetric monoidal category. 
\end{enumerate}
\end{proposition}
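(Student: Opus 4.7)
The plan is to prove that the mate of the commutativity $\zap_\ttO \circ p^* = p_0^* \circ \zap_\ttP$ with respect to the adjunctions $\Free_\ttO \dashv \zap_\ttO$ and $\Free_\ttP \dashv \zap_\ttP$, namely the natural transformation
\[
\Free_\ttO \circ p_0^* \longrightarrow p^* \circ \Free_\ttP,
\]
is an isomorphism; by definition, this is precisely the right Beck-Chevalley property of the square~\eqref{rightBC}. The chief tool is the explicit colimit description of the free operad from Theorem~\ref{podlehl_jsem} and its generalization to an arbitrary operadic category via \cite[Section~5]{SydneyII},
\[
\Free_\ttO(E)(t) \;\cong\; \bigoplus_{[\bfT]\in\pi_0(\lTw(t))}\,\colim{\lambda\in\Lev(\bfT)}\,E(\bfT,\lambda),
\]
whose constituents admit a functorial interpretation along the operadic functor $p$.

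For case (i), the defining lifting property of a discrete operadic opfibration promotes each elementary morphism in $\ttP$ out of $p(t)$ to a unique elementary morphism in $\ttO$ out of $t$ and, iterating, lifts every labelled tower starting at $p(t)$ to a unique labelled tower starting at $t$. Projection along $p$ is inverse to this lift and both operations respect the isomorphisms of labelled towers used in the definition of $\pi_0(\lTw(-))$, hence induce a bijection between the connected components of $\lTw(t)$ in $\ttO$ and those of $\lTw(p(t))$ in $\ttP$. Operadicity of $p$ moreover identifies the fibers of corresponding elementary morphisms, so the collection $p_0^* E$ evaluated on an $\ttO$-tower agrees with $E$ evaluated on the projected $\ttP$-tower; summing the colimit formula over $\pi_0$ and using cocompleteness of $\ttV$ then produces the desired natural isomorphism, with naturality in $t$ and $E$ being routine.

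Case (ii) is formally dual but markedly more delicate, since a discrete operadic fibration lifts incoming rather than outgoing morphisms, and consequently there is no direct way to transport a labelled tower starting at $p(t)$ to one starting at $t$. The argument now relies on the finite fibers of $p$ together with the additive structure on $\ttV$: for each labelled tower in $\ttP$ starting at $p(t)$ the compatible lifts to $\ttO$-towers starting at $t$ are parameterized by choices in suitable finite fibers of $p$, and these finitely many contributions assemble, by additivity, into a direct sum matching the colimit on the $\ttO$-side term-by-term. The main obstacle of the whole proof lies precisely here: one must verify that this finite-fiber bookkeeping reproduces the tower-indexed colimit on each level arrangement and that the resulting isomorphism is compatible with the operadic compositions. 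Once this is done, the right Beck-Chevalley property for the square~\eqref{rightBC} follows in both cases.
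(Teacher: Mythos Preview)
Your approach is genuinely different from the paper's and, for case~(ii), has a real gap.

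The paper does not touch the explicit colimit formula for free operads at all. Instead it invokes Lemma~\ref{left and right BC}. For case~(i) it writes down the right adjoints $(p_0)_*$ and $p_*$ explicitly as products over the fibers of $p$ (the latter by dualizing \cite[Theorem~2.4]{duodel}), observes that these formulas give $(p_0)_*\,\zap_\ttO = \zap_\ttP\, p_*$ on the nose, and concludes right Beck--Chevalley from the first part of the lemma. For case~(ii) it shows that the left adjoints $(p_0)_!$ and $p_!$ are given by finite coproducts over fibers (the latter from \cite[Theorem~2.4]{duodel}), and uses additivity of $\ttV$ together with finiteness of fibers to see that these coincide with $(p_0)_*$ and $p_*$; the second part of Lemma~\ref{left and right BC} then finishes the argument. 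The whole proof is a few lines of adjoint bookkeeping.

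Your route via the tower formula is workable for case~(i): an opfibration lifts outgoing elementary morphisms uniquely, so labelled towers at $p(t)$ lift bijectively to labelled towers at $t$ with matching fibers, and the two colimit expressions agree term by term. For case~(ii), however, your sketch does not go through. A discrete operadic fibration lifts \emph{incoming} morphisms, while towers are built from outgoing ones, so there is no evident parametrization of $\ttO$-towers over $t$ by $\ttP$-towers over $p(t)$ together with ``choices in finite fibers''. Concretely, Example~\ref{Myska a Tucinek} exhibits a canonical contraction of a rooted tree whose target is not rooted, so already for $\RTr\hookrightarrow\Tr$ the projection on towers fails to be a bijection in the naive sense. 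Your appeal to additivity is also misplaced: additivity identifies finite products with finite coproducts, but does not by itself reconcile two direct sums indexed by non-bijective sets. The ``finite-fiber bookkeeping'' you flag as the main obstacle is therefore not a detail to be filled in but the entire content of case~(ii), and you have not supplied it. The paper's abstract argument sidesteps this difficulty completely by never opening up $\Free$.
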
 

\begin{proof}   
The  right adjoint $(p_0)_*: \CollectV\ttO \to
\CollectV\ttP$ to the restriction   
$p_0^*: \CollectV\ttP \to \CollectV\ttO$ is given on objects by
\begin{subequations}
\begin{equation}
\label{Jolom}
(p_0)_*(E)(T) := \prod_{p(t) = T} E(t), \ E \in  \CollectV{\ttO}, \
T \in \ttP.
\end{equation}
Assume that $p : \ttO \to \ttP$ is a~discrete operadic opfibration.
By dualizing~\cite[Theorem~2.4]{duodel} one verifies that the right
adjoint \hbox{$p_* : \Oper{\ttO} \to \Oper{\tt P}$} is defined on objects by
\begin{equation}
\label{dnes_moje_posledni_prednaska_v_MSRI}
p_*(\oO)(T) := \prod_{p(t) = T} \oO(t), \ \oO \in  \Oper{\ttO} \
T \in \tt P.
\end{equation}
\end{subequations}
Comparing~(\ref{Jolom})
with~(\ref{dnes_moje_posledni_prednaska_v_MSRI}) we see that
$(p_0)_*\, \zap_\ttO = \zap_\ttP p_*$,
which is condition~(i) of 
Lemma~\ref{left and right BC}. Thus~(\ref{rightBC}) is right Beck-Chevalley by the same
lemma. This finishes the proof of the case of a discrete opfibration.

Let us assume  that
$p : \ttO \to \ttP$ is a~discrete operadic fibration with finite
fibers. We want to verify the assumptions of the second part of 
Lemma~\ref{left and right BC}, i.e.\ to check that
$(p_0)_!$ is a right adjoint to $p_0^*$ and that
$p_!$ is a right adjoint to $p^*$. 

It is clear that  $(p_0)_!$ is for an arbitrary $p: \ttO \to \ttP$ 
given on objects by the formula 
\begin{equation*}
(p_0)_!(E)(T) := \bigoplus_{p(t) = T} E(t), \ E \in  \CollectV{\ttO},\
T \in \ttP.
\end{equation*}
Since $V$ is additive and $p$ has finite fibers, this functor coincides
with the right adjoint $(p_0)_*$ described in~(\ref{Jolom}).
On the other hand,  
\cite[Theorem~2.4]{duodel}  gives the following formula for the 
underlying collection of $p_!(\oO)$:
\begin{equation*}
p_!(\oO)(T) := \bigoplus_{p(t) = T} \oO(t), \ \oO \in  \OperV{\ttO}, \
T \in \ttP.
\end{equation*}
It is not hard to see, using the additivity of $V$ and the finiteness 
of the fibers of $p$,
that this formula describes also a right adjoint to $p^*$, which completes
the proof for operadic fibrations.
\end{proof}

In the rest of this section, the coefficient category $\ttV$ will be
that of differential graded vector spaces. It clearly satisfies all
assumptions required in Proposition~\ref{Koronavirus se siri.}. 

\begin{proposition}
Assume that (\ref{rightBC}) is a right Beck-Chevalley square and
$\rho : \minP \to \termP$ is the minimal model of the terminal
$\ttP$-operad $\termP$.~Then
\[
\xymatrix@1{
\minO := p^*(\minP)\ \ar[r]^(.55){p^*(\rho)}& \ p^*(\termP) = \termO
}
\]
is the minimal model of the terminal $\ttO$-operad $\termO$.
\end{proposition}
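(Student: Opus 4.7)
The plan is to verify the three conditions in the definition of a minimal model for $p^*(\minP) \to p^*(\termP)$ by transferring them from $\minP \to \termP$, the key step being to use the Beck--Chevalley hypothesis to show that the restriction of a free operad is free.

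First I would check that $p^*(\termP) = \termO$. Since $p$ is a strict operadic functor and the components of $\termP$ are all $\bbk$ with identity structure operations, the formula $p^*(\termP)(t) := \termP(p(t)) = \bbk$ for $t \in \ttO$ together with the definition of $p^*$ on operadic composition gives $p^*(\termP) = \termO$ on the nose. Condition~(i) of the minimal model definition, that $p^*(\rho)(t) = \rho(p(t))$ be a homology isomorphism for each $t \in \ttO$, is then immediate from the fact that $\rho(T)$ is a homology isomorphism for each $T \in \ttP$.

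The main obstacle is to verify the freeness part of the minimality condition~(ii). This is where the right Beck--Chevalley assumption enters. Under the assumptions of Proposition~\ref{Koronavirus se siri.}, the functor $p^*$ admits not only a left adjoint $p_!$ but also a right adjoint $p_*$ (and similarly for $p_0^*$), and the formulas for $p_!$ and $p_*$ agree on underlying collections. Thus the hypotheses of the second part of Lemma~\ref{left and right BC} are in force, so the right Beck--Chevalley square~\eqref{rightBC} is also a left Beck--Chevalley square, meaning that the left adjoints commute up to canonical isomorphism:
\[
p^* \circ \Free_\ttP \ \cong\ \Free_\ttO \circ p_0^*.
\]
Writing $\minP = (\Free_\ttP(D), \partial)$ for the minimal model from Theorem~\ref{Woy-Woy} (or its analog for $\ttP$), we therefore get an isomorphism of underlying graded $\ttO$-operads
\[
p^*(\Free_\ttP(D)) \ \cong\ \Free_\ttO(p_0^*(D)),
\]
so $p^*(\minP)$ is free on the $1$-connected $\ttO$-collection $p_0^*(D)$.

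It remains to check that the induced differential on $\Free_\ttO(p_0^*(D))$ has no constant or linear terms. Since the restriction functor $p^*$ acts on differentials by the defining formula $(p^*\partial)(t) = \partial(p(t))$, and since $\partial$ on $\minP$ maps generators into $\Free^{\geq 2}_\ttP(D)$, the Beck--Chevalley isomorphism above (which respects the word-length grading, as it comes from the adjunction at the level of collections) identifies $p^*(\partial)$ restricted to $p_0^*(D)$ with a map landing in $\Free^{\geq 2}_\ttO(p_0^*(D))$. Hence the minimality condition is preserved, and $p^*(\minP) \to \termO$ is a minimal model of $\termO$.
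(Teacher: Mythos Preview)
Your overall strategy---transfer freeness via Beck--Chevalley, check minimality and the homology isomorphism componentwise---is exactly the paper's. But your argument for the key isomorphism $p^*\circ\Free_\ttP \cong \Free_\ttO\circ p_0^*$ takes a wrong turn.

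You invoke ``the assumptions of Proposition~\ref{Koronavirus se siri.}'' to get $p_!\cong p_*$ and then the second part of Lemma~\ref{left and right BC} to pass from the right Beck--Chevalley condition to the left one. This is both unnecessary and not licensed by the hypotheses: the proposition you are proving assumes only that~\eqref{rightBC} is a right Beck--Chevalley square, not that $p$ is a discrete (op)fibration. More importantly, you have the identification backwards. In the notation of~\eqref{lrbc} applied to~\eqref{rightBC}, we have $u_!=\Free_\ttO$, $v_!=\Free_\ttP$, $q^*=p_0^*$, and the \emph{right} Beck--Chevalley condition is precisely that the canonical map $u_!q^*\to p^*v_!$, i.e.\ $\Free_\ttO\circ p_0^*\to p^*\circ\Free_\ttP$, is an isomorphism. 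That is exactly the freeness statement you need, and it is your hypothesis. The left Beck--Chevalley condition, by contrast, concerns $(p_0)_!\circ\zap_\ttO\cong\zap_\ttP\circ p_!$, which is irrelevant here.

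So drop the detour through Proposition~\ref{Koronavirus se siri.} and Lemma~\ref{left and right BC}; simply observe that the right Beck--Chevalley hypothesis \emph{is} the isomorphism $p^*\Free_\ttP\cong\Free_\ttO p_0^*$. The rest of your proof (transport of the differential, preservation of minimality via the word-length grading, and the componentwise homology isomorphism $p^*(\rho)(t)=\rho(p(t))$) matches the paper.
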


\begin{proof}
It is clear that $p^*(\termP) = \termO$. Let $\minP =
(\Free_\ttP(E_\ttP),\pa_\ttP)$. 
Diagram~(\ref{rightBC}) is, by definition, a~right Beck-Chevalley
square if $p^*\, \Free_\ttP \cong \Free_\ttO\, p_0^*$. In particular,
\[
p^*(\Free_\ttP(E_\ttP)) \cong \Free_\ttO(p_0^*(E_\ttP)),
\]
thus $p^*(\minP)$ is the free operad generated by the collection
$E_\ttO := p_0^*(E_\ttP)$. It is easy to verify that $p^*$ brings
derivations to derivations and differentials to differentials. We
therefore conclude that
\[
p^*(\minP) \cong (\Free_\ttO(E_\ttO),\pa_\ttO),
\]
where the minimality of $\pa_\ttO$ can also be established easily. 

It remains to prove that $p^*(\rho)$ induces a component-wise
isomorphism of homology. This however follows immediately from the
definition of the restriction functor requiring that
\[
p^*(\rho)(t) = \rho(p(t)) : \minP(p(t)) \to \termP(p(t)) = \bbk, \ t \in \ttO,
\]
where $\rho(p(t))$ is a homology isomorphism since $\rho : \minP \to
\termP$ is the minimal model of $\termP$ by assumption. 
\end{proof}

\begin{corollary}
\label{grantova_zprava}
Let $p: \ttO \to \ttP$ be either a discrete operadic opfibration, or a
discrete operadic fibration with finite fibers, and
$\rho : \minP \to \termP$ the minimal model of the terminal
$\ttP$-operad.~Then
\[
\xymatrix@1{
\minO := p^*(\minP)\ \ar[r]^(.55){p^*(\rho)}& \ p^*(\termP) = \termO
}
\]
is the minimal model of the terminal $\ttO$-operad.
\end{corollary}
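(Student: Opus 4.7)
The plan is to observe that this corollary is an immediate consequence of the two preceding results in this subsection, so no new ideas are required beyond pointing out how they combine. In more detail, the preceding proposition shows that whenever the square
\[
\xymatrix{\OperV\ttO
 \ar[dd]^{\zap_\ttO}
&  &
\ar[dd]_{\zap_\ttP}\OperV\ttP \ar[ll]_{p^*}
\\
 & &
\\
\CollectV\ttO
 &  & \CollectV\ttP
\ar[ll]_{p_0^*}
}
\]
is right Beck-Chevalley, the restriction $p^*(\minP) \to p^*(\termP)$ is the minimal model of $p^*(\termP)$. Proposition~\ref{Koronavirus se siri.} asserts precisely that the hypotheses of the corollary, namely that $p$ is either a discrete operadic opfibration or a discrete operadic fibration with finite fibers, imply the right Beck-Chevalley condition (using that our coefficient category $\ttV$ of differential graded vector spaces is cocomplete, additive, and symmetric monoidal).

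The first step is therefore to verify that $p^*(\termP) = \termO$, which is immediate from the defining formula \eqref{Zitra mam treti prednasku.} and the fact that the terminal operad has all components equal to $\bbk$ with identity structure maps. The second step is to apply Proposition~\ref{Koronavirus se siri.} to conclude that the square is right Beck-Chevalley in either of the two situations stipulated by the corollary. The final step is to invoke the preceding proposition to conclude that $p^*(\rho): p^*(\minP) \to p^*(\termP) = \termO$ is the minimal model of $\termO$.

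There is no real obstacle here, as the substantive work has already been done: verifying the Beck-Chevalley condition is a direct comparison of the explicit formulas \eqref{Jolom} and \eqref{dnes_moje_posledni_prednaska_v_MSRI} (for opfibrations), or an application of the finiteness and additivity hypotheses together with the analogous formulas for $(p_0)_!$ and $p_!$ (for fibrations with finite fibers), combined with Lemma~\ref{left and right BC}. Once one knows the square is right Beck-Chevalley, the commutation $p^*\Free_\ttP \cong \Free_\ttO p_0^*$ transforms the free generating collection $E_\ttP$ of $\minP$ into the generating collection $p_0^*(E_\ttP)$ of $p^*(\minP)$; the restriction functor $p^*$ preserves derivations, differentials, the minimality condition, and the property of being a component-wise homology isomorphism, each of these being verified component-wise by evaluation at $t \in \ttO$ using \eqref{Zitra mam treti prednasku.}. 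Therefore the corollary follows with no further argument.
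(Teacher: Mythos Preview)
Your proposal is correct and matches the paper's approach: the paper states the corollary without proof, treating it as the evident combination of Proposition~\ref{Koronavirus se siri.} (which supplies the right Beck--Chevalley condition under either hypothesis) with the unnamed proposition immediately preceding the corollary (which shows that right Beck--Chevalley implies $p^*$ preserves minimal models of the terminal operad). Your write-up simply makes this combination explicit, which is exactly what is intended.
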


\begin{remark} 
The assumptions and conclusion of 
Corollary~\ref{grantova_zprava} were verified in
the context of operadic categories related to permutads
in~\cite{perm}. 
\end{remark}

\subsection{Minimal model for $\termggGrc$.}
\label{Podari se mi koupit to auto?} 
The operadic category $\ggGrc$ consists of 
graphs $\Gamma \in \Grc$ equipped with
a {\em genus grading\/}, which is a non-negative integer $g(v) \in \bbN$ specified
for each $v \in \Vert(\Gamma)$. The genus of the entire graph $\Gamma$
is defined by
\[
g(\Gamma):=
\sum_{v \in \Vert(\Gamma)} g(v) + \dim(H^1(|\Gamma|; {\mathbb Z})),
\]
where $|\Gamma|$ is the obvious geometric realization of $\Gamma$. As shown
in~\cite[Section~5]{SydneyII}, algebras for $\termggGrc$ are
modular operads introduced in~\cite{GK98}.

Assume that $\Gamma \in \ggGrc$ and that $T \in \Tr(\Gamma)$ is a
graph-tree. Then there exists a unique genus grading of each of the
graphs $\Gamma_v$ decorating the vertices of $T$ subject, along with
the compatibilities required in
Subsection~\ref{Dnes_je_Michalova_oslava.}, also to:

\noindent 
{\em Genus compatibility.} Let $e$ be an internal edge of $T$ pointing
from the vertex labelled by  $\Gamma_u$ to  the vertex labelled by  
$\Gamma_v$. By Compatibility~1, $e$ is also (the label of) a vertex of
$\Gamma_v$. With this convention in mind we require that
\[
g(e) = g(\Gamma_u).
\]
In words, the vertex of $\Gamma_v$ to which $\Gamma_u$
is contracted bears the genus $ g(\Gamma_u)$.

The statement can be verified directly, which we leave as an exercise
to the reader. It can also be established by inductive applications of

\begin{lemma}
\label{Musim psat grantovou zpravu}
Let $\phi : \Gamma \to \Gamma''$ be an elementary
morphism in $\Grc$ with fiber $\Gamma'$, in shorthand
\begin{equation}
\label{V sobotu letim do Melbourne}
\Gamma' \fib \Gamma \stackrel\phi\longrightarrow \Gamma''.
\end{equation}
Assume moreover that $\Gamma$ bears a genus grading. Then there are
unique genus gradings of $\Gamma'$ and $\Gamma''$ such that~(\ref{V
  sobotu letim do Melbourne}) becomes a diagram, 
in $\ggGrc$, of an elementary map and its fiber. 
\end{lemma}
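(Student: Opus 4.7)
The plan is to define the required genus gradings by essentially restricting the grading of $\Gamma$ to $\Gamma'$ and pushing it forward to $\Gamma''$, and then verify that the defining identity for the genus is preserved. Recall that the non-trivial fiber $\Gamma'$ sits over a unique vertex $x \in \Vert(\Gamma'')$, while every other vertex of $\Gamma''$ corresponds canonically to a vertex of $\Gamma$ not contained in $\Gamma'$. Thus $\Vert(\Gamma)$ decomposes as the disjoint union of $\Vert(\Gamma')$ and $\Vert(\Gamma'') \setminus \{x\}$. I would define
\[
g'(v) := g(v) \text{ for } v \in \Vert(\Gamma'), \qquad
g''(v'') := g(v) \text{ for } v'' \in \Vert(\Gamma'') \setminus \{x\} \text{ coming from } v \in \Vert(\Gamma),
\]
and finally $g''(x) := g(\Gamma')$, where $g(\Gamma')$ is computed with the grading $g'$ just defined.

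The key verification is that $g(\Gamma'') = g(\Gamma)$, which is the compatibility condition required to view $\phi$ as an elementary morphism in $\ggGrc$ (and which, once combined with the definition of $g''(x)$, forces the grading on $\Gamma''$ to match the fiber-wise genus coming from $\Gamma'$). To check this, I would use the basic topological fact that contracting the connected subgraph $|\Gamma'| \subset |\Gamma|$ to a single point yields $|\Gamma''|$, which gives the splitting
\[
\dim H^1(|\Gamma|; \mathbb{Z}) = \dim H^1(|\Gamma'|; \mathbb{Z}) + \dim H^1(|\Gamma''|; \mathbb{Z}).
\]
Combining this with the disjoint decomposition of vertices, a direct computation yields
\[
g(\Gamma'') = \sum_{v'' \neq x} g''(v'') + g''(x) + \dim H^1(|\Gamma''|) = \sum_{v \in \Vert(\Gamma)} g(v) + \dim H^1(|\Gamma|) = g(\Gamma),
\]
as required.

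For uniqueness, I would argue that both gradings are forced. The grading on $\Gamma'$ must restrict to that of $\Gamma$ because $\Gamma'$ is a subgraph whose vertices literally are vertices of $\Gamma$, and the fiber axiom for $\ggGrc$ demands that the grading on a fiber be inherited from the total space. Similarly, on vertices $v'' \neq x$ of $\Gamma''$ the fiber is the corolla at the corresponding vertex $v$ of $\Gamma$, whose genus is $g(v)$, and the genus condition for a morphism in $\ggGrc$ requires $g''(v'') = g(\text{fiber}) = g(v)$. At $x$, the fiber is $\Gamma'$ and the same axiom forces $g''(x) = g(\Gamma')$. The main (and really only) obstacle is the topological identity for $H^1$ under contraction of a connected subgraph, but this is standard — it follows e.g.\ from the long exact sequence of the pair $(|\Gamma|, |\Gamma'|)$ together with the contractibility of the quotient map on $|\Gamma'|$, or directly by counting independent cycles.
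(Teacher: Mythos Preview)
Your proof is correct and carries out explicitly what the paper only alludes to. The paper's own proof is a single sentence: it invokes that the projection $p:\ggGrc\to\Grc$ is a discrete operadic opfibration, which by definition means that every morphism in $\Grc$ with a specified genus-graded source lifts uniquely to $\ggGrc$ --- precisely the content of the lemma. The paper then remarks that the claim ``can also be verified directly,'' and that is exactly what you do: you write down the only possible gradings on $\Gamma'$ and $\Gamma''$ and check compatibility via the additivity of $\dim H^1$ under contraction of a connected subgraph. What the abstract opfibration argument buys is that one never has to unfold these formulas; what your direct argument buys is that one sees concretely where the genus on the contracted vertex comes from.

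One small organizational remark: the defining condition for $\phi$ to be a morphism in $\ggGrc$ is the \emph{local} one --- each vertex $w$ of $\Gamma''$ must carry the genus of the fiber over it --- rather than the global identity $g(\Gamma'')=g(\Gamma)$ that you call the ``key verification.'' You do state and use the local conditions (in your uniqueness paragraph), and your computation of the global identity is what confirms that the local prescription is internally consistent, so nothing is actually missing; but leading with the local condition would make the logical structure cleaner.
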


\begin{proof}
A consequence of the fact that the obvious projection 
$p: \ggGrc \to \Grc$ is a discrete operadic opfibration, though it can
also be verified directly.
\end{proof}

For $\Gamma \in \ggGrc$ having at least one internal edge and for a
$1$-connected $\ggGrc$-collection $E$, the right hand side of
\begin{equation}
\label{Za tyden do SaFra}
\Freegg(E)(\GAmma) :=
\bigoplus_{T \in \gTr(\GAmma)} \  \colim{\lambda \in
  \Lev(T)} E(T,\lambda),
\end{equation}
makes sense because, as explained above, 
each of the graphs $\Gamma_i$, $1 \leq i \leq k$,
in~(\ref{Krtek_na_mne_kouka.}) where $E(T,\lambda)$ was defined, 
bears a~unique genus grading induced by
the genus grading of $\Gamma$.

Let $p: \ggGrc \to \Grc$ be as before the canonical projection that
forgets the genus grading, and  
$p^*: \Oper\Grc \to \Oper\ggGrc$  resp.~$p_0^*: \Collect\Grc \to
\Collect\ggGrc$ the induced restrictions. 
The values of the $\ggGrc$-collection $\Dgg \in
\Collect\ggGrc$ given by
\[
\Dgg(\Gamma) := \det(\Gamma), \ \Gamma \in \ggGrc,
\]
do not depend on the genus grading, thus $\Dgg = p_0^*(D)$,
where $D \in \Collect\Grc$ is as in~(\ref{Flicek}). For the same
reasons
\[
\Freegg(\Dgg) = p^* \Free (D),
\]
so, since $p: \ggGrc \to \Grc$ is a discrete operadic opfibration,
$\Freegg(\Dgg)$ defined by~(\ref{Za tyden do SaFra}) with $E = \Dgg$
represents 
the free $\ggGrc$-operad on $\Dgg$ by Proposition~\ref{Koronavirus se siri.}. 
The differential $\pa$ on $\Freegg(\Dgg)$ is given by an obvious
analog of~(\ref{Posledni patek v Sydney}). 

As expected, we define
$\rho : \Freegg(\Dgg) \to \termggGrc$ as the unique map of $\ggGrc$-operads
whose restriction $\rho|_{\Dgg(\Gamma)}$ is, for $\Gamma \in \ggGrc$,
given by a modification of~(\ref{Obehnu_to_dnes?}), namely by
\[
\rho|_{D(\Gamma)} := 
\begin{cases}
\id_\bbk : D(\Gamma) = \bbk \to \bbk =
\termggGrc(\Gamma), &\hbox {if $|\Edg(\Gamma)| = 1$,
  while}
\\
0,& \hbox {if  $|\Edg(\Gamma)| \geq 2$.}  
\end{cases}
\]

\begin{theorem}
\label{neco na jazyku}
The object
$\minggGrc = (\Freegg(\Dgg),\pa) \stackrel\rho\longrightarrow
(\termggGrc,\pa =0)$ is a minimal model of the terminal
$\ggGrc$-operad $\termggGrc$.
\end{theorem}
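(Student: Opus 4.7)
The plan is to apply Corollary~\ref{grantova_zprava} to the canonical projection $p: \ggGrc \to \Grc$ that forgets the genus grading. The first step is to confirm that $p$ is a discrete operadic opfibration; this is stated in~\cite[Section~4]{SydneyI} and is consistent with Lemma~\ref{Musim psat grantovou zpravu}, which provides the essential fiber-lifting content: any elementary morphism in $\Grc$ out of a genus-graded graph lifts uniquely to an elementary morphism in $\ggGrc$ with the prescribed source grading, and the fiber inherits a canonical genus grading.

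Next I would identify $\minggGrc$ with $p^*(\minGrc)$. On the level of generating collections this is immediate: the vector space $\det(\Gamma)$ depends only on $\Edg(\Gamma)$ and not on any choice of genus grading, so $\Dgg = p_0^*(D)$. Since $p$ is a discrete operadic opfibration, Proposition~\ref{Koronavirus se siri.} (the right Beck--Chevalley property of square~(\ref{rightBC})) yields an isomorphism $\Freegg(\Dgg) \cong p^*\Free(D)$ of free operads, which at the level of colimit formulas~(\ref{Za tyden do SaFra}) and~(\ref{Athalia}) is nothing but the observation that the graph-trees contributing to $\Freegg(\Dgg)(\Gamma)$ for $\Gamma \in \ggGrc$ are exactly the graph-trees in $\gTr(p(\Gamma))$, each fiber now carrying the unique induced genus grading furnished by Lemma~\ref{Musim psat grantovou zpravu}.

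The differential $\pa$ on $\Freegg(\Dgg)$ is defined by the literal analog of~(\ref{Posledni patek v Sydney}), depending only on the combinatorics of edges and not on genus, so it coincides with $p^*(\pa)$ under the above isomorphism. Similarly the morphism $\rho$ is specified by an arity-wise formula on graphs with one internal edge that does not refer to the genus grading, and since $\termggGrc = p^*(\termGrc)$ componentwise (both assign $\bbk$ with identity structure maps), the morphism $\rho : \minggGrc \to \termggGrc$ coincides with $p^*$ applied to the morphism $\rho : \minGrc \to \termGrc$ of Theorem~\ref{Woy-Woy}.

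With these identifications in place, the theorem follows from Corollary~\ref{grantova_zprava}: $\minGrc \to \termGrc$ is a minimal model of $\termGrc$ by Theorem~\ref{Woy-Woy}, hence its restriction along the discrete operadic opfibration $p$ is a minimal model of $\termggGrc$. I do not anticipate a genuine obstacle here; the design of $\minggGrc$ in Subsection~\ref{Podari se mi koupit to auto?} was engineered precisely to make this transfer tautological. The only point worth verifying carefully is the uniqueness and consistency of the induced genus grading on fibers of graph-trees, which is exactly the content of the Genus Compatibility condition and of Lemma~\ref{Musim psat grantovou zpravu}.
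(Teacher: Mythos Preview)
Your proposal is correct and follows essentially the same approach as the paper: both invoke Corollary~\ref{grantova_zprava} applied to the discrete operadic opfibration $p:\ggGrc\to\Grc$, after identifying $\minggGrc$ with $p^*(\minGrc)$. The paper's proof is terser and additionally remarks that the acyclicity can be seen directly from the componentwise isomorphism $\minggGrc(\Gamma)\cong\minGrc(\widehat\Gamma)$, but your more detailed verification of the identifications is entirely in line with its argument.
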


\begin{proof}
A consequence of Corollary~\ref{grantova_zprava}, though the
acyclicity of $\minggGrc$ in positive dimensions follows directly from the
acyclicity of  $\minGrc$ proven in Subsection~\ref{Posledni
  tyden.}, thanks to the isomorphism
\[
\minggGrc(\Gamma) \cong \minGrc(\widehat \Gamma),\ \Gamma \in \ggGrc,
\]
of dg vector spaces, where  $\widehat \Gamma \in
\Grc$ is $\Gamma$ stripped of the genus grading. 
\end{proof}

\subsection{Minimal model for $\termTr$}
\label{Dnes prvni vylet na kole}
Let $\Tr \subset \Grc$ be the full subcategory of contractible, 
i.e.~simply connected graphs. Algebras over the terminal $\Tr$-operad
$\termTr$ are cyclic operads. 
Although it was not stated in~\cite{SydneyI}, the inclusion  $p: \Tr
\hookrightarrow \Grc$ is a discrete operadic opfibration as well, 
we thus are still in the comfortable situation of Subsection~\ref{Dnes_hori_bus.}.
Also an analog of Lemma~\ref{Musim psat grantovou zpravu} is obvious: if
$\Gamma \in \Grc$ is contractible, then $\Gamma'$, as a connected subgraph of
$\Gamma$, is contractible too, and so is the quotient $\Gamma''$.
The minimal model for $\termTr$ can therefore be constructed by mimicking the
methods of Subsection~\ref{Podari se mi koupit to auto?}, so we will
be telegraphic.

For a graph $\Gamma \in \Tr$ having at least one internal edge and a
$1$-connected $\Tr$-collection $E$, the expression in the right hand
side of
\begin{equation}
\label{Jaruska jede za M1.}
\FreeTr(E)(\GAmma) :=
\bigoplus_{T \in \gTr(\GAmma)} \  \colim{\lambda \in
  \Lev(T)} E(T,\lambda)
\end{equation}
makes sense, since each of the graphs $\Rada \Gamma1k$
in the definition~(\ref{Krtek_na_mne_kouka.}) of $E(T,\lambda)$ is connected.
Let $\DTr \in \Collect\Tr$ be the collection with
\[
\DTr(\Gamma) := \det(\Gamma), \ \Gamma \in \Tr.
\]
For $\DTr$ in place of $E$, formula~(\ref{Jaruska jede za M1.})
describes the pieces of the free operad $\FreeTr(\DTr)$.
The differential $\pa$ on $\FreeTr(\DTr)$ is given by an obvious modification of
formula~(\ref{Posledni patek v Sydney}).
Also the definition of 
$\rho : \FreeTr(\DTr) \to \termTr$ is the expected one. We have

\begin{theorem}
\label{Druhou panenku jsem nechal v Praze.}
The object $\minTr = (\FreeTr(\DTr),\pa) \stackrel\rho\longrightarrow (\termTr,\pa
=0)$ is a minimal model of the terminal $\Tr$-operad $\termTr$.
\end{theorem}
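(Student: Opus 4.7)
The plan is to mirror the proof of Theorem~\ref{neco na jazyku} by applying Corollary~\ref{grantova_zprava} to the inclusion $p : \Tr \hookrightarrow \Grc$, which the paragraph preceding the theorem asserts is a discrete operadic opfibration. Since the coefficient category of differential graded $\bbk$-vector spaces is cocomplete and symmetric monoidal, Proposition~\ref{Koronavirus se siri.}(i) applies to give the right Beck--Chevalley square for $p$.

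The first step would be the identification $\DTr = p_0^*(D)$, which is immediate from the definitions $D(\Gamma) := \det(\Gamma)$ and $\DTr(\Gamma) := \det(\Gamma)$ and the fact that a tree viewed as an object of $\Grc$ has the same set of internal edges. By the Beck--Chevalley property, $\FreeTr(\DTr) \cong p^*\Free(D)$. The explicit description in formula~\eqref{Jaruska jede za M1.} follows because the analog of Lemma~\ref{Musim psat grantovou zpravu} for $\Tr$ (which, as the text notes, is obvious: a connected subgraph of a contractible graph is contractible, and so is the quotient) guarantees that every graph $\Gamma_v$ decorating a vertex of $T \in \gTr(\Gamma)$, for $\Gamma \in \Tr$, is itself contractible, so the colimit defining $\Free(D)(\Gamma)$ in~\eqref{Zbijecky duni.} is indexed by exactly the same graph-trees as the colimit in~\eqref{Jaruska jede za M1.}.

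Next I would verify that the derivation $\pa$ on $\FreeTr(\DTr)$ defined by the obvious modification of~\eqref{Posledni patek v Sydney} coincides with $p^*(\pa)$, where $\pa$ on the right is the differential of $\minGrc$; likewise that $\rho : \FreeTr(\DTr) \to \termTr$ is $p^*$ applied to $\rho : \minGrc \to \termGrc$. Both verifications are straightforward because $\pa$ and $\rho$ are defined component-wise on generators by formulas depending only on $\det(\Edg(\Gamma))$ and $|\Edg(\Gamma)|$, which are preserved by the inclusion $p$. This gives an isomorphism $\minTr \cong p^*(\minGrc)$ of dg $\Tr$-operads compatible with $\rho$, and then Corollary~\ref{grantova_zprava} concludes that $\minTr \stackrel\rho\to \termTr$ is the minimal model of $\termTr$.

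I expect no real obstacle: once the opfibration property and the containment $\Tr \hookrightarrow \Grc$ are granted, everything reduces to a pullback along $p$. In particular, for each $\Gamma \in \Tr$ one has a canonical isomorphism of dg vector spaces $\minTr(\Gamma) \cong \minGrc(\Gamma)$ (with $\Gamma$ on the right seen as an object of $\Grc$), so the acyclicity in positive degrees and the identity $H_0(\minTr)(\Gamma) \cong \bbk$ are inherited directly from Subsection~\ref{Posledni tyden.}, and the fact that $\rho$ is a homology isomorphism in each arity follows from the corresponding statement for $\minGrc$. The minimality of $\pa$ on $\FreeTr(\DTr)$ is clear since $\pa$ sends generators to $\Free^2_\Tr(\DTr)$.
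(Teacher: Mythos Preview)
Your proposal is correct and follows essentially the same approach as the paper: the paper's proof is literally ``Verbatim modification of the proof of Theorem~\ref{neco na jazyku},'' which in turn invokes Corollary~\ref{grantova_zprava} for the opfibration $p:\Tr\hookrightarrow\Grc$ and notes the componentwise isomorphism $\minTr(\Gamma)\cong\minGrc(\Gamma)$ to inherit acyclicity from Subsection~\ref{Posledni tyden.}. Your write-up is more detailed than what the paper records, but the content is the same.
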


\begin{proof}
Verbatim modification of the proof of Theorem~\ref{neco na jazyku}.
\end{proof}

\subsection{Minimal model for $\termWhe$.}
\label{Poslu Jarce obrazky kyticek.}
We say, following~\cite[Example~4.19]{SydneyI}, that 
an ordered connected graph $\Gamma \in \Gr$ is {\em oriented\/} if
\begin{itemize}
\item[(i)]
each internal edge if $\Gamma$ is oriented, 
meaning that one of the half-edges forming this edge is 
marked as the input one, and the other as the output, and
\item[(ii)]
also the legs of $\Gamma$ are marked as either input or output ones.
\end{itemize}
Oriented ordered graphs form an operadic category
$\Whe$. Algebras for the terminal $\Whe$-operad $\termWhe$ are wheeled
properads introduced in~\cite{mms}. As noted in Example~2.19 loc.~cit., the
functor $p : \Whe\to\Grc$ that forgets the orientation 
is a discrete operadic opfibration,
thus the constructions of the previous two subsections, including the
description of the minimal model for $\termWhe$, translate
verbatim. We leave the details to the reader.   

\subsection{Minimal model for $\termRTr$}
\label{Prvni tyden v Berkeley konci.}
We will call the leg of  $\Gamma \in \Tr$, minimal in
the global order, the {\em root\/} of $\Gamma$.
Let us orient  edges of $\Gamma \in \Tr$ so
that they point to the root. We say that $\Gamma$ is
{\em rooted\/} if the outgoing half-edge of each vertex is the smallest
in the local order at that vertex. In~\cite{SydneyI} we considered the 
full subcategory $\RTr$ of $\Tr$ consisting of rooted trees 
and identified algebras over the terminal $\RTr$ operad
$\termRTr$ with ordinary, classical operads. The inclusion
$p: \RTr \hookrightarrow \Tr$  is, however, a discrete operadic
{\em fibration\/}, not an opfibration,
\hbox{cf.~\cite[Example~4.9]{SydneyI}}.
Nevertheless, the fibers of $p$ are  finite, being either empty or
an one-point set, thus Corollary~\ref{grantova_zprava}
applies, so we can construct an explicit minimal model for $\termRTr$ 
by obvious modifications of the methods used in the previous subsections.

\begin{example}
\label{Myska a Tucinek}
Figure~\ref{Vcera jsem si koupil kolo.} illustrates the failure of
Lemma~\ref{Musim psat grantovou zpravu} for $\Tr$ in place of $\Grc$
and $\RTr$ in place of $\ggGrc$. The graph
$\Gamma$  in that figure
has vertices (indexed by) $\{1,2,3\}$ and half-edges
$\{1,2,3,4,5,6\}$, the graph $\Gamma''$  has vertices $\{1,2\}$ and half-edges
$\{1,2,3,4\}$. The map $\phi : \Gamma \to \Gamma''$ 
sends the vertices $1$ and~$3$ of $\Gamma$ to
the vertex $1$ (the fat one) of $\Gamma''$, and the vertex $2$ of
$\Gamma$ to the vertex of $\Gamma''$ with the same label.
The labels in the circles indicate the global orders.
While $\Gamma$ is rooted, $\Gamma''$ is not, 
although $\phi$ is even a canonical
contraction.

\begin{figure}[h] 
\[
\psscalebox{1.0 1.0} 
{
\begin{pspicture}(0,-1.6698403)(7.6223903,1.6698403)
\psline[linecolor=black, linewidth=0.04, arrowsize=0.05291667cm 3.05,arrowlength=2.14,arrowinset=0.0]{<-}(3.8860292,-0.6644454)(3.8860292,0.3355546)
\psdots[linecolor=black, dotsize=0.41680175](4.93,-1.4)
\rput{-90.00662}(4.18,1.6714104){\psdots[linecolor=black, dotsize=0.22148077](3.0792458,-1.4076726)}
\rput{-90.00662}(4.6881533,6.6716647){\psdots[linecolor=black, dotsize=0.22148077](5.6795235,0.9920268)}
\rput{-90.00662}(1.0873207,3.0720809){\psdots[linecolor=black, dotsize=0.22148077](2.0795233,0.99244297)}
\rput{-90.00662}(2.8877368,4.871873){\psdots[linecolor=black, dotsize=0.22148077](3.8795233,0.9922349)}
\psline[linecolor=black, linewidth=0.04, arrowsize=0.05291667cm 3.05,arrowlength=2.14,arrowinset=0.0]{<-}(6.270205,-1.4266953)(1.6392437,-1.42616)
\psline[linecolor=black, linewidth=0.04, arrowsize=0.05291667cm 3.05,arrowlength=2.14,arrowinset=0.0]{<-}(7.075869,0.98936534)(0.63279223,0.9901102)
\psline[linecolor=black, linewidth=0.025](3.85,-1.24)(3.85,-1.5990185)
\psline[linecolor=black, linewidth=0.025](3.0202122,1.2)(3.0201623,0.8172518)
\psline[linecolor=black, linewidth=0.025](4.832328,1.2)(4.832278,0.8170423)
\rput{-90.00662}(-0.7887707,1.3642031){\pscircle[linecolor=black, linewidth=0.025, dimen=outer](0.35,1.1){0.29}}
\rput{-90.00662}(6.259974,8.410503){\pscircle[linecolor=black, linewidth=0.025, dimen=outer](7.4,1.0756266){0.29}}
\rput{-90.00662}(2.6340673,-0.045894984){\pscircle[linecolor=black, linewidth=0.025, dimen=outer](1.3940888,-1.28){0.29}}
\rput{-90.00662}(7.8702774,5.1884995){\pscircle[linecolor=black, linewidth=0.025, dimen=outer](6.6,-1.3404341){0.29}}
\rput[b](7.34,.9){1} \rput[b](0.31460062,.9){2}
\rput[b](6.53,-1.53){1} \rput[b](1.35,-1.55){2}

\rput[b](3.8860292,1.2){1} \rput[b](5.7,1.2){3} \rput[b](2.1,1.2){2}

\rput[b](4.8860292,-1){1} \rput[b](2.75,-1.18){2}

\rput[t](6.3146005,.85){5}
\rput[t](5.1717434,.85){6}
\rput[t](4.3146005,.85){1}
\rput[t](3.4574578,.85){2}
\rput[t](2.6003149,.85){3}
\rput[t](1.3146006,.85){4}

\rput[t](5.4574575,-1.55){2}
\rput[t](4.4574575,-1.55){1}
\rput[t](3.4574578,-1.55){3} 
\rput[t](2.146007,-1.55){4}
\rput[l](4.1,-0){$\phi$}

\rput[r](-.3,1){$\Gamma:$}
\rput[r](.7,-1.4){$\Gamma'':$}
\end{pspicture}
}
\]
\caption{\label{Vcera jsem si koupil kolo.}
Failure of Lemma~\ref{Musim psat grantovou zpravu}: $\Gamma''$ is
not rooted whereas $\Gamma$ is.
}
\end{figure}
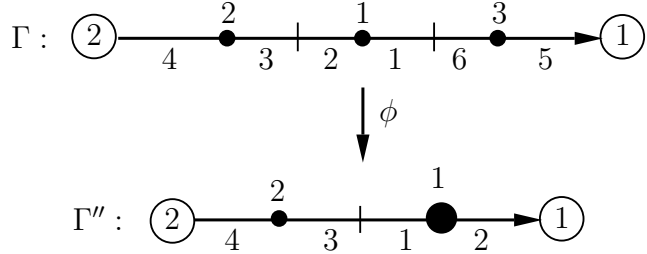
\end{example}

\subsection{Minimal model for $\termSRTr$}
\label{Ze by byla monografie uz konecne dokoncena?}

It turns out that the operadic category $\RTr$ contains much smaller
subcategory which still captures the classical operads in the same way
$\RTr$ does. It is defined as follows.
We say that a rooted tree $\Gamma \in \RTr$ is {\em strongly rooted\/}, if the
order of its set~$V$ of vertices is compatible with the rooted
structure. By this we mean that, if $v\in V$ lies on the path
connecting $u \in V$ with the root, then $v < u$ in
$V$. We denote by $\SRTr \subset \RTr$ the full subcategory of
strongly rooted trees. It is easy to show that all fibers of a map $\phi :
\Gamma' \to \Gamma''$ between strongly rooted trees are strongly rooted, and
also that all rooted corollas are clearly strongly rooted. Consequently, $\SRTr$ is
an operadic category.

We claim that algebras over the terminal
$\SRTr$-operad $\termSRTr$ are the same as $\termRTr$-algebras,
i.e.\ that they are ordinary operads. This might sound surprising,
since  $\SRTr$ has less objects than $\RTr$, therefore 
$\termSRTr$-algebras have less operations than $\termRTr$-algebras.  
Each operation of a $\termRTr$-algebra can however be obtained from an
operation of a $\termSRTr$-algebra via certain permutation of inputs,
since each rooted tree is isomorphic with a strongly rooted
tree, by a~local~isomorphism.

\begin{example}
Consider the rooted trees in Figure~\ref{Zitra se podivam na Golden
  Bridge.}. The left one belongs to $\SRTr$ and represents the
operation
\begin{figure}[h]
  \centering
\[
\psscalebox{1.0 1.0} 
{
\begin{pspicture}(0,-1.8698152)(6.1154737,1.8698152)
\psline[linecolor=black, linewidth=0.04, arrowsize=0.05291667cm 3.05,arrowlength=2.14,arrowinset=0.0]{<-}(1.1890492,2.0569606)(1.1870459,-0.54303545)
\psline[linecolor=black, linewidth=0.04](1.1584744,-0.4287497)(0.015617327,-1.8573211)
\psline[linecolor=black, linewidth=0.04](1.1584744,-0.4287497)(2.1584744,-1.8573211)
\psline[linecolor=black, linewidth=0.04](1.1584744,0.8569646)(0.015617327,-0.28589258)
\psline[linecolor=black, linewidth=0.04](1.1584744,0.8569646)(2.3013315,-0.28589258)
\psdots[linecolor=black, dotsize=0.22148077](1.18054,-0.48614708)
\psdots[linecolor=black, dotsize=0.22148077](1.18054,0.8)
\psline[linecolor=black, linewidth=0.04, arrowsize=0.05291667cm 3.05,arrowlength=2.14,arrowinset=0.0]{<-}(4.9890494,2.0569606)(4.987046,-0.54303545)
\psline[linecolor=black, linewidth=0.04](4.9584746,-0.4287497)(3.8156173,-1.8573211)
\psline[linecolor=black, linewidth=0.04](4.9584746,-0.4287497)(5.9584746,-1.8573211)
\psline[linecolor=black, linewidth=0.04](4.9584746,0.8569646)(3.8156173,-0.28589258)
\psline[linecolor=black, linewidth=0.04](4.9584746,0.8569646)(6.1013317,-0.28589258)
\psdots[linecolor=black, dotsize=0.22148077](4.98054,-0.48614708)
\psdots[linecolor=black, dotsize=0.22148077](4.98054,0.8)
\rput[bl](1.4,1.1426789){1}
\rput[bl](1.4,-0.2){2}
\rput[bl](5.244189,1.085536){2}
\rput[bl](5.2,-0.2){1}
\rput[br](0.7013316,1){$\Gamma'$}
\rput[br](4.6,1){$\Gamma''$}
\end{pspicture}
}
\]
\caption{
Rooted trees $\Gamma' \in \SRTr$ and $\Gamma'' \in \RTr$. Only the labels of
vertices are shown.
\label{Zitra se podivam na Golden Bridge.}
}
\end{figure}
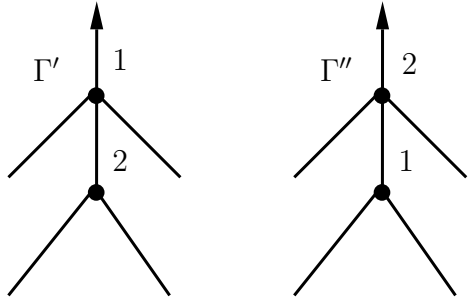
\[
{\mathcal O}_{\Gamma'} : P(3) \ot P(2) \longrightarrow P(4)
\]
given by ${\mathcal O}_{\Gamma'}(x \ot y) = x \circ_2 y$, where $\circ_2$
is the standard $\circ$-operation in a unital operad $P$, while
\[
{\mathcal O}_{\Gamma''} : P(2) \ot P(3) \longrightarrow P(4)
\]
is given by  ${\mathcal O}_{\Gamma''}(a\ot b) = b \circ_2 a$. Thus
${\mathcal O}_{\Gamma''} = {\mathcal O}_{\Gamma'} \circ \sigma$ with
$\sigma \in \Sigma_2$ the transposition.
\end{example}

Neither the inclusion $\SRTr \hookrightarrow \RTr$, nor
the composite  $\SRTr \hookrightarrow \RTr \hookrightarrow  \Tr$ is 
a~fibration or opfibration, but the category $\SRTr$ is, unlike $\RTr$, 
closed under canonical contractions.
It can indeed be easily verified that, if $\Gamma' \in \SRTr$ and if $\pi :
\Gamma' \to \Gamma''$ is the canonical contraction, then~$\Gamma''$ and
also the fiber of $\pi$ belongs to $\SRTr$. The methods developed in
Subsection~\ref{Dnes_je_Michalova_oslava.} can therefore 
be used with $\SRTr$ in place of $\Grc$. 
Namely, each tower~(\ref{t1}) in $\SRTr$ can be brought into the
canonical form where  $\ell = \id_\GAmma$
and all $\tau$'s are canonical contractions, and as such be
represented by a graph tree in $\gTr(\GAmma)$. The right hand side of 
formula~(\ref{Zbijecky duni.}) then, for $\Gamma \in \SRTr$ and $E \in
\Collect\SRTr$, expresses
the component of the free $\SRTr$-operad $\FreeSRTr(E)$.

Our description of a minimal model for $\termSRTr$ is the expected
one. We define the collection
$\DSRTr \in \Collect\SRTr$ by
\[
\DSRTr(\Gamma) := \det(\Gamma), \ \Gamma \in \SRTr,
\]
and the differential $\pa$ on the free operad $\FreeSRTr(\DSRTr)$ 
whose components are
\begin{equation}
\label{Jaruska}
\FreeSRTr(\DSRTr)(\GAmma) :=
\bigoplus_{T \in \gTr(\GAmma)} \  \colim{\lambda \in
  \Lev(T)} \DSRTr(T,\lambda)
\end{equation}
by the verbatim version of formula~(\ref{Posledni patek v
  Sydney}). The morphism $\rho : \FreeSRTr(\DSRTr) \to \termSRTr$ is
given by an obvious analog of~(\ref{Obehnu_to_dnes?}). One has

\begin{theorem}
The object $\minSRTr := (\FreeSRTr(\DSRTr),\pa) \stackrel\rho\to (\termSRTr,\pa
=0)$ is a minimal model of the terminal $\SRTr$-operad.
\end{theorem}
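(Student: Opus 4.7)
The proof plan is to rerun, essentially verbatim, the argument establishing Theorem~\ref{Woy-Woy}, exploiting the fact (stated in the paragraph preceding the theorem) that $\SRTr$ is closed under canonical contractions together with their fibers. That closure property is the only ingredient of Section~\ref{hadrova_panenka} that used intrinsic properties of $\Grc$ rather than purely formal constructions on graph-trees, so once it is in place every lemma of Section~\ref{hadrova_panenka} transports to $\SRTr$ without change.

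First I would check that the inductive construction of $\alpha_\Gamma$ in Proposition~\ref{jovanica} produces, for $\Gamma\in\SRTr$, a graph-tree $T\in \gTr(\Gamma)$ whose decorations $\Gamma_v$ and intermediate quotients $\Gamma_X$ all lie in $\SRTr$. This is where closure under canonical contractions (and under taking fibers of canonical contractions) is used: since every $\Gamma_X$ is obtained from $\Gamma$ by an iterated canonical contraction and every $\Gamma_v$ is a fiber of such a contraction, all pieces stay in $\SRTr$. Conversely, any tower in $\SRTr$ can, by the argument of Theorem~\ref{Uz_nechci_letat_zakladni_vycvik.}, be put into its canonical form of contractions, and the resulting graph-tree has $\SRTr$-decorations. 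So the bijection $\alpha_\Gamma : \mathcal{A}(\bfH_\Gamma) \stackrel{\cong}{\longrightarrow} \gTr(\Gamma)$ and the formula~\eqref{Jaruska} for $\FreeSRTr(\DSRTr)(\Gamma)$ are established exactly as in $\Grc$.

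Next, since the generating collection $\DSRTr(\Gamma) = \det(\Gamma)$ depends only on the internal-edge set of $\Gamma$, the same chain of canonical identifications used in Subsection~\ref{Posledni tyden.} gives
\[
\FreeSRTr(\DSRTr)(\Gamma) \cong \bigoplus_{C : \bfH_\Gamma} \Span(\{e_C\}),
\]
with $e_C$ in degree equal to the rank of $C$, and the transported differential has the form $\pa(e_C) = \sum \eta^F_C\, e_F$ with $\eta^F_C \in \{-1,+1\}$ required by Lemma~\ref{Jeste_ani_nevim_kde_budu_v_Melbourne_bydlet.}. Combining Lemma~\ref{abspol} (which identifies $\mathcal{A}(\bfH_\Gamma)$ with the face lattice of a convex polytope), Lemma~\ref{Panenka_na_mne_kouka.} (which supplies the diamond property) and Lemma~\ref{Jeste_ani_nevim_kde_budu_v_Melbourne_bydlet.}, one concludes that $\minSRTr(\Gamma)$ is the cellular chain complex of that polytope, hence acyclic in positive dimensions and one-dimensional in degree $0$.

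Finally I would verify the two remaining points. That $\rho$ is a chain map follows from the same computation as in Section~\ref{hadrova_panenka}: by the derivation property it suffices to check $\rho\pa = 0$ on generators $\mu = a\wedge b \in \det(\Gamma)$ for graphs with exactly two internal edges, and the two graph-trees in $\gTr^2(\Gamma)$ give contributions that cancel. That $H_0(\rho)(\Gamma)$ is nonzero for every $\Gamma\in\SRTr$ is obtained from~\eqref{Lisegacev}: every $\Gamma\in \SRTr$ admits at least one graph-tree $T\in \gTr_0(\Gamma)$ whose decorating graphs each have a single internal edge (obtained by any choice of total order on $\Edg(\Gamma)$ compatible with the strong-root structure, and successive canonical contractions), and on such a $T$ one has $\rho(T) = 1$. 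The one step that requires slightly more care than in the $\Grc$ case is the existence of such a $T$: the single-edge fibers produced by successive canonical contractions must themselves be strongly rooted, which I expect to follow directly from closure of $\SRTr$ under canonical contractions, but this is the only nontrivial check and is the natural place to pay attention when writing the full proof.
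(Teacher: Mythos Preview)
Your approach is correct, but it is more elaborate than what the paper actually does. You propose to rerun the entire polytope argument of Section~\ref{hadrova_panenka} inside $\SRTr$, re-establishing the isomorphism $\alpha_\Gamma$, the diamond property, and the cellular chain identification separately for strongly rooted trees. The paper instead observes a single shortcut: since $\SRTr$ is closed under canonical contractions and their fibers, the set $\gTr(\Gamma)$ for $\Gamma\in\SRTr$ is \emph{literally the same set} whether computed in $\SRTr$ or in $\Grc$, and since $\DSRTr(\Gamma)=\det(\Gamma)=D(\Gamma)$ depends only on the edge set, one has the equality of dg vector spaces
\[
(\FreeSRTr(\DSRTr)(\Gamma),\pa)=(\Free(D)(\Gamma),\pa)=\minGrc(\Gamma),
\]
so acyclicity in positive degrees and $H_0\cong\bbk$ are inherited directly from Theorem~\ref{Woy-Woy} without redoing any of the polytope analysis.

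Your route has the advantage of being self-contained and makes explicit which steps of Section~\ref{hadrova_panenka} rely on closure under canonical contractions; the paper's route buys brevity by recognizing that nothing new needs to be proved once the free-operad formula~\eqref{Jaruska} is in hand. In particular, the worry you flag at the end---existence of a suitable $T\in\gTr_0(\Gamma)$ with strongly rooted decorations---evaporates in the paper's framing, since every graph-tree for $\Gamma\in\SRTr$ automatically has $\SRTr$-decorations, so the $H_0$ computation is identical to the $\Grc$ case and needs no separate check.
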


\begin{proof}
The only possibly nontrivial issue is the acyclicity $\minSRTr$ 
in positive dimensions. Comparing the formula
\[
\Free(D)(\GAmma) :=
\bigoplus_{T \in \gTr(\GAmma)} \  \colim{\lambda \in
  \Lev(T)} D(T,\lambda)
\]
defining the component of the minimal model $\minGrc$ for $\termGrc$
with~(\ref{Jaruska}) we notice the {\em equality\/}
\[
(\FreeSRTr(\DSRTr)(\Gamma),\pa) = (\Free(D)(\Gamma),\pa)
\]
for $\Gamma \in \SRTr$. In other words
\[
\minSRTr(\Gamma) = \minGrc(\Gamma), \ \hbox { for } 
\Gamma \in \SRTr \subset \Grc.
\]
The acyclicity of $\minSRTr$ thus follows from the acyclicity of
$\minGrc$ established in the proof of Theorem~\ref{Woy-Woy}.
\end{proof}


\bibliographystyle{plain}

\end{document}